\def \A {{\mathbb A}}
\def \N {{\mathbb N}}
\def \P {{\mathbb P}}
\def \R {{\mathbb R}}
\def \U {{\mathbb U}}
\def \Z {{\mathbb Z}}
\def\leq{\leqslant}
\def\geq{\geqslant}
\def\le{\leqslant}
\def\ge{\geqslant}
\renewcommand{\thefootnote}{\fnsymbol{footnote}}
\newtheorem{theorem}{Theorem}[section]
\newtheorem{lemma}[theorem]{Lemma}
\newtheorem{definition}[theorem]{Definition}
\newtheorem{remark}[theorem]{Remark}
\newtheorem{proposition}[theorem]{Proposition}
\numberwithin{equation}{section}
\def\rank {{\rm{rank}}}
\def \codim {{\rm{codim}}}
\begin{document}
\title[Forms in prime variables and differing degrees]{Forms in prime variables and differing degrees}

\author{Jianya Liu}
\address{School of Mathematics and Data Science Institute\\ Shandong University \\ Jinan  250100 \\ China}
\email{jyliu@sdu.edu.cn}
\author{Sizhe Xie}
\address{School of Mathematics \\ Shandong University \\ Jinan  250100 \\ China}
\email{szxie@mail.sdu.edu.cn}

\maketitle

\begin{abstract} 
Let $F_1,\ldots,F_R$ be homogeneous polynomials with integer coefficients in $n$ variables with differing degrees. 
Write $\boldsymbol{F}=(F_1,\ldots,F_R)$ with $D$ being the maximal degree. Suppose that $\boldsymbol{F}$ is a 
nonsingular system and $n\ge D^2 4^{D+6}R^5$. We prove an asymptotic formula for the number of prime 
solutions to $\boldsymbol{F}(\boldsymbol{x})=\boldsymbol{0}$, whose main term is positive if
(i) $\boldsymbol{F}(\boldsymbol{x})=\boldsymbol{0}$ has a nonsingular solution
over the $p$-adic units $\U_p$ for all primes $p$, and
(ii) $\boldsymbol{F}(\boldsymbol{x})=\boldsymbol{0}$ has a nonsingular solution in the open cube $(0,1)^n$. 
This can be viewed as a smooth local-global principle for $\boldsymbol{F}(\boldsymbol{x})=\boldsymbol{0}$ 
in primes with differing degrees.
It follows that, under (i) and (ii), 
the set of prime solutions to $\boldsymbol{F}(\boldsymbol{x})=\boldsymbol{0}$ is Zariski dense 
in the set of its solutions. 
\end{abstract}

{\let\thefootnote\relax\footnotetext{Mathematics Subject
Classification (2020): Primary 11P55 $\cdot$ Secondary 11P32, 11D45, 11D72}}

{\let\thefootnote\relax\footnotetext{Keywords: Forms, differing degrees, prime variables, local-global principle, 
Hardy-Littlewood circle method, enlarged major arcs}}

\section{Introduction and statement of results}

The distribution of primes and the solubility of diophantine equations in integers are two important research areas 
in number theory. It is therefore natural to consider prime solutions to a system of diophantine equations. 

Linear equations in primes have already been attacked by Vinogradov \cite{Vin37} and many others in the last century. 
In the new century there have been great progress due to the work of Green and Tao \cite{GreTao08}, as well as the 
efforts surrounding bounded gaps between primes by Zhang \cite{Zha14}, Maynard \cite{May15}, and others. 
In the nonlinear case, for a long time our knowledge was limited to the diagonal case, namely  
the Waring-Goldbach problem in the literature.  For history and developments, the reader is referred to the monograph of 
Hua \cite{Hua}, and the papers of Kawada and Wooley 
\cite{KawWoo}, Zhao \cite{Zhao14}, and Kumchev and Wooley \cite{KumWoo}.   
Progress in the non-diagonal case came recently with the work of the first named author \cite{Liu11}
where generic quadratic forms in primes are treated. Soon after this, the number of variables required in \cite{Liu11} 
has been reduced by Zhao \cite{Zhao16}, and further by Green \cite{Gre21}. 

In this paper, we are concerned with prime solutions to a system of homogeneous polynomials 
with differing degrees. Let 
$$
F_i(\boldsymbol{x})=F_i(x_1, \ldots, x_n)\in \Z[x_1, \ldots, x_n] \quad (1\leq i\leq R)
$$ 
be homogeneous polynomials whose degrees do not need to be equal but are all greater than $1$. 
These homogeneous polynomials are called {\it forms} throughout the paper. We use bold face letters to 
denote vectors whose dimensions are clear from the context; for example we have written 
$\boldsymbol{x} = (x_1, \ldots, x_n)$ in the above. 
Now writing $\boldsymbol{F}=(F_1,\ldots,F_R)$, we are going to study prime solutions to the system of diophantine equations 
\begin{equation}\label{equationF=0} 
\boldsymbol{F}(\boldsymbol{x})=\boldsymbol{0}. 
\end{equation} 
Let $V\subseteq \A^n$ be the algebraic variety
\begin{eqnarray*}
V=V_{\boldsymbol{F}}=\{\boldsymbol{x}\in \A^n:\ \boldsymbol{F}(\boldsymbol{x})=\boldsymbol{0}\},
\end{eqnarray*}
and let 
\begin{eqnarray*}
V(\P)=\{\boldsymbol{x}\in \P^n:\ \boldsymbol{F}(\boldsymbol{x})=\boldsymbol{0}\},
\end{eqnarray*}
where $\P$ denote the set of primes. According to a general conjecture of Bourgain, Gamburd and Sarnak \cite{BouGamSar}, 
$V(\P)$ should be Zariski dense in $V$ if suitable conditions for $\boldsymbol{F}$ are satisfied. 
It follows in particular that, 
under these appropriate assumptions, \eqref{equationF=0} should have infinitely many solutions in primes. 

One should check integer solutions to \eqref{equationF=0} before considering its prime solutions, and Birch has two 
well-known theorems closely related to this topic. The first theorem of Birch \cite{Bir57} requires all 
the forms in \eqref{equationF=0} have odd degrees, 
and in the second theorem \cite{Bir61} he  
supposes all the forms in \eqref{equationF=0} have the same degree. 
If the number of variables are 
sufficiently large in terms of the degrees, in both theorems, then \eqref{equationF=0} has infinitely many integer solutions. 
In the direction of Birch's first theorem, prime solutions to \eqref{equationF=0} has previously been 
studied by Br\"udern et al \cite{BDLW}.  
In the special case of a cubic form, the number of variables required in \cite{BDLW}  
has subsequently been reduced by Brandes and Dietmann \cite{BraDie21}. 

Birch's second theorem states that, if $\boldsymbol{F}$ has the same degree $d$ and $\dim V_{\boldsymbol{F}}=n-R$, and if the number $n$ of variables satisfies   
\begin{eqnarray}\label{Bir/n}
n - \dim V_{\boldsymbol{F}}^* > (d-1) 2^{d-1}R(R+1)
\end{eqnarray}
where $V_{\boldsymbol{F}}^*$ is the singular loci of $\boldsymbol{F}$, then $V_{\boldsymbol{F}}$ has infinitely many integer points. 
Recently Browning and Heath-Brown \cite{BroHB} have solved the general case when the forms in nonsingular $\boldsymbol{F}$ 
have differing degrees, where the number of
variables needed is of similar nature as in \eqref{Bir/n}. 
Here and throughout we say a system $\boldsymbol{F}$ is {\it nonsingular}  
if its Jacobian matrix
\begin{align*}
J_{\boldsymbol{F}}(\boldsymbol{x})=\bigg(\frac{\partial F_i}{\partial x_j}(\boldsymbol{x})\bigg)_{\substack{1\le i\le R\\ 1\le j\le n}} 
\end{align*}
satisfies $\rank(J_{\boldsymbol{F}}(\boldsymbol{x}))=R$ for every nonzero $\boldsymbol{x}\in V_{\boldsymbol{F}}$. 
In the case of prime variables, Cook and Magyar \cite{CooMag} have dealt with \eqref{equationF=0} with the same degree $d$, 
provided that the number $n$ of variables is larger than some exponential tower function of $d$. 
Subsequently Yamagishi \cite{Yam18} has generalized this to $\boldsymbol{F}$ with differing degrees, under the condition that 
the number $n$ of required variables is even larger than that in \cite{CooMag}. 
Also he has solved, in \cite{Yam22}, the case of one form of degree $d$ in primes as long as 
\begin{eqnarray*}
n - \dim V_{\boldsymbol{F}}^* \geq 2^8 3^4 5^2 d^3 (2d-1)^2 4^d. 
\end{eqnarray*}
Very recently, Liu and Zhao \cite{LiuZha23} have solved \eqref{equationF=0} in primes with the same degree $d$, and proved that  
$V(\P)$ is Zariski dense in $V$ provided that $\boldsymbol{F}$ is nonsingular and
\begin{eqnarray}\label{LiuZhao/n}
n \geq d^2 4^d R^5. 
\end{eqnarray}

\medskip 

The purpose of this paper is to solve \eqref{equationF=0} with differing degrees in primes, and our main results 
are Theorems \ref{Thm1} and \ref{Thm2} below. 

\begin{theorem}\label{Thm1}
Let $F_1,\ldots,F_R\in\mathbb{Z}[x_1,\ldots,x_n] $ be a nonsingular system of forms with $D$ being the maximal 
degree of all these forms. 
Suppose  
\begin{equation}\label{Thm1/n>}
n \geq D^2 4^{D+6} R^5. 
\end{equation}
Then $V(\mathbb{P})$ is Zariski dense in $V$ provided that

\text{\rm (i)} \eqref{equationF=0} has a nonsingular point in the $p$-adic unit $\U_p$ for each prime $p$, and

\text{\rm (ii)} \eqref{equationF=0} has a nonsingular real point in $(0,1)^n$.
\end{theorem}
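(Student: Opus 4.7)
The plan is to attack \eqref{equationF=0} by the Hardy--Littlewood circle method adapted to prime variables, following the framework used by Liu and Zhao \cite{LiuZha23} in the equal-degree case while importing the differing-degree machinery developed by Browning and Heath-Brown \cite{BroHB}. Weight each coordinate by the von Mangoldt function and consider
\begin{equation*}
N(X)=\sum_{\boldsymbol{x}\in[1,X]^n}\Lambda(x_1)\cdots\Lambda(x_n)\,\mathbf{1}\bigl[\boldsymbol{F}(\boldsymbol{x})=\boldsymbol{0}\bigr],
\end{equation*}
which by orthogonality equals $\int_{\mathbb{T}^R}S(\boldsymbol{\alpha})\,\mathrm{d}\boldsymbol{\alpha}$ for the exponential sum
\begin{equation*}
S(\boldsymbol{\alpha})=\sum_{\boldsymbol{x}\in[1,X]^n}\Lambda(x_1)\cdots\Lambda(x_n)\,e\bigl(\boldsymbol{\alpha}\cdot\boldsymbol{F}(\boldsymbol{x})\bigr).
\end{equation*}
I would then partition $\mathbb{T}^R$ into \emph{enlarged} major arcs $\mathfrak{M}$, of width roughly $X^{-d_i+\delta}$ in the $i$-th coordinate for a small $\delta>0$, and their complement, the minor arcs $\mathfrak{m}$; the enlargement is essential because the sieve-based estimates for $\Lambda$ on a given residue class require $q$ to range over a power of $X$, while the width must nevertheless be small enough that a Weyl-type bound suffices on $\mathfrak{m}$.

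On the major arcs I would carry out a standard asymptotic analysis: approximating $\Lambda$ by its expectation on residue classes coprime to $q$ factors $S(\boldsymbol{\alpha})$ into a complete Gauss-sum piece times an archimedean oscillatory integral, with a Siegel--Walfisz type error. Summing over $\mathfrak{M}$ produces the heuristic main term $\mathfrak{S}\cdot\mathfrak{I}\cdot X^{n-(d_1+\cdots+d_R)}(\log X)^{-n}$, where $\mathfrak{S}=\prod_p \sigma_p$ is the singular series and $\mathfrak{I}$ is the singular integral. Hypothesis (i) forces each $\sigma_p$ to be uniformly bounded away from zero by Hensel lifting from the nonsingular $\boldsymbol{U}_p$-point, giving $\mathfrak{S}>0$; hypothesis (ii) yields $\mathfrak{I}>0$ because a nonsingular real solution in $(0,1)^n$ makes $\boldsymbol{F}$ a submersion on a neighborhood of positive density inside the cube.

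The heart of the argument, and the main obstacle, is the minor-arc estimate. I would reduce $S(\boldsymbol{\alpha})$ to integer-variable exponential sums through Vaughan's (or Heath-Brown's) identity, decomposing it into Type-I and Type-II bilinear pieces that after Cauchy--Schwarz are controlled by Weyl-type sums attached to $\boldsymbol{F}$ itself. For the latter one needs a Birch-type inequality adapted to the differing-degree setting: differencing each $F_i$ exactly $d_i-1$ times produces a multilinear form whose level sets are governed by the dimension of the singular locus, and the nonsingularity of $\boldsymbol{F}$ keeps this locus small. The delicate book-keeping lies in calibrating the number of Cauchy--Schwarz steps produced by the $R$ forms against the $D-1$ Weyl differencings required by the top-degree form, and converting the resulting geometric bound back into a power-saving on $\mathfrak{m}$; the exponents $R^5$, $4^{D+6}$ and $D^2$ in \eqref{Thm1/n>} reflect precisely the losses incurred at these steps.

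Zariski density of $V(\mathbb{P})$ in $V$ then follows by a standard comparison argument. The positive asymptotic gives $N(X)\gg X^{n-(d_1+\cdots+d_R)}/(\log X)^n$; for any proper Zariski-closed $W\subsetneq V$, an upper-bound sieve (or another application of the circle method, suitably normalized) bounds the number of prime solutions lying on $W$ by a quantity of strictly smaller order, because $\dim W<\dim V$ reduces the size of the corresponding singular integral. Hence prime solutions exist in $V\setminus W$ for every such $W$, which is exactly Zariski density. I expect essentially all the technical work to be concentrated in the minor-arc Weyl inequality, in particular in arranging the differing-degree differencing to be compatible with the Vaughan decomposition while preserving a sharp dependence on $R$ and on the codimension of the singular locus.
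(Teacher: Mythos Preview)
Your outline has the right global architecture, but it contains a genuine gap in the major-arc analysis and misidentifies where the bottleneck constant \eqref{Thm1/n>} actually comes from.

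You write that on the enlarged major arcs you would approximate $\Lambda$ by its expectation on reduced residue classes ``with a Siegel--Walfisz type error''. This is precisely what fails: once the major arcs are enlarged to moduli $q\le X^{\varpi}$ for a fixed $\varpi>0$, the Siegel--Walfisz theorem is unavailable, since it gives nothing beyond $q\le (\log X)^{B}$. The paper handles this by expanding $S(\boldsymbol{\alpha})$ into primitive characters, inserting the explicit formula for $\sum_{m\le x}\Lambda(m)\chi(m)$, and then controlling the resulting sums over zeros by a combination of large-sieve zero-density estimates (for moduli exceeding a fixed power of $\log X$) and Chudakov's zero-free region (for small moduli). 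A further, and non-obvious, ingredient is a cancellation estimate for the twisted Gauss sums
\[
\sideset{}{^{\dagger}}\sum_{\boldsymbol{a}\bmod q}\Bigl|\,\sideset{}{^{\ast}}\sum_{\boldsymbol{h}\bmod q}\bar\chi_1(h_1)\cdots\bar\chi_n(h_n)\,e\Bigl(\tfrac{\boldsymbol{a}\cdot\boldsymbol{F}(\boldsymbol{h})}{q}\Bigr)\Bigr|\ll q^{\,n-3/2},
\]
which is obtained by squaring out and applying the minor-arc machinery (Proposition~\ref{prop}) to the doubled system $\boldsymbol{F}^{\ast}$. It is \emph{this} Gauss-sum lemma, not the minor-arc Weyl bound, that forces $n\ge D^{2}4^{D+6}R^{5}$; the minor arcs themselves only need $n\ge D^{2}4^{D+2}R^{5}$ (Lemma~\ref{lemma over minor2}). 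So your expectation that ``essentially all the technical work'' lives on $\mathfrak{m}$, and that the exponents in \eqref{Thm1/n>} record minor-arc losses, is inverted.

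On the minor arcs your plan (Vaughan plus Weyl differencing form-by-form) is plausible in spirit but differs from what the paper does. The paper does not run Vaughan on $S(\boldsymbol{\alpha})$ directly; instead it splits the variables as $\boldsymbol{x}=(\boldsymbol{y},\boldsymbol{z},\boldsymbol{w})$, applies Cauchy--Schwarz twice to reach a mean value $\mathcal{J}_{\mathfrak n}$ (Lemma~\ref{lemma mean J} and Proposition~\ref{prop}), and feeds in the Browning--Heath-Brown alternative lemma at the level of \emph{unweighted} sums in $\boldsymbol{y}$ and $(\boldsymbol{y},\boldsymbol{z})$. Primality is only exploited in the residual block $\boldsymbol{w}$, through a pointwise bound $\mathcal{E}_{\boldsymbol{y},\boldsymbol{z}}(\boldsymbol{\alpha})\ll P^{t}Q^{-1/(2^{D}R)+\varepsilon}$ on $\mathfrak m$ (Lemma~\ref{lemma bound E1}). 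This mean-value route is what lets the argument avoid tracking the Type-I/Type-II losses against $R$ separate differencing counts, and is why the $R$-dependence stays polynomial.

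Finally, a minor normalisation point: with the $\Lambda$-weighting the main term is $\mathfrak{S}_{\boldsymbol{F}}\mathfrak{J}_{\boldsymbol{F}}X^{\,n-\mathcal D}$, with no $(\log X)^{-n}$.
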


The significance of Theorem \ref{Thm1} is that $\boldsymbol{F}$ has differing degrees.
If all these forms have the same degree $d$, then $D=d$ and the above bound \eqref{Thm1/n>} reduces to 
$n\ge d^2 4^{d+6} R^5$ which is of similar nature as that in \eqref{LiuZhao/n}. 
Note that Theorem~\ref{Thm1} is new even in the case $R=2$ with degrees being distinct.

Theorem~\ref{Thm1} is formulated from the point of view of \cite{BouGamSar}. 
It follows in particular that there are infinitely many prime solutions to 
\eqref{equationF=0} provided that $n$ is as in  
\eqref{Thm1/n>}, and that the local conditions (i) and (ii) are satisfied. 

\medskip 

We need more notations before stating Theorem~\ref{Thm2}. 
Write 
\begin{equation*}
\Delta:= \{d\in \mathbb{N} : \text{degree $d$ occurs in $\boldsymbol{F}$}\}, 
\end{equation*}
where we may assume that the cardinality of $\Delta$ is greater than $1$, otherwise it is covered by \cite{LiuZha23}.  
In addition we may suppose that our system $\boldsymbol{F}$ consists of forms of degrees larger than $1$, since degree $1$ forms
can be used to eliminate variables, leading to a new system of forms with degrees at least 2 in fewer variables. 
Writing 
\begin{equation}\label{def/C/D}
C:=\min_{d \in \Delta }{d}, \quad  D:=\max_{d \in \Delta }{d}, 
\end{equation}
we thus have $C \ge 2$ and $D \ge 3.$ 
Next we will renumber the forms $F_i$ by putting together those of equal degree.
Denote by $r_d$ the number of forms with degree $d$ in our system for $d \in \Delta $. Then we have $r_C, r_D \ge 1$. 
For completeness we define $r_d=0$ for $d \notin \Delta $.
Therefore $\boldsymbol{F}$ can be written as 
\begin{equation*}
F_{1,d}(x_1,\ldots,x_n),\ldots,F_{r_d,d}(x_1,\ldots,x_n) \in \mathbb{Z} [x_1,\ldots,x_n] \quad (1\le d\le D)
\end{equation*}
and the total number $R$ of forms in the system satisfies 
\begin{equation}\label{def/R}
R=\sum_{d \in \Delta}r_d=\sum^D_{d=1}r_d. 
\end{equation}
We are going to need another quantity $\mathcal{D}$ defined as 
\begin{equation}\label{def/R/calD}
\mathcal{D}=\sum_{ d\in \Delta }dr_d. 
\end{equation}

Let $\mathfrak{B}$ be a fixed box in $n$-dimensional space determined by 
$$
b_j'< x_j \leq b_j'' \quad (1\le j\le n),
$$
where $0<b_j'<b_j''<1$ are fixed constants. Suppose that $P$ is a parameter  
that can be sufficiently large, and we write $P\mathfrak{B}$ for the set of all vectors $\boldsymbol{x}$ 
with $P^{-1}\boldsymbol{x}\in \mathfrak{B}$. Define 
\begin{equation*}
N_{\boldsymbol{F}}(P):=\sum_{\substack{\boldsymbol{x}\in P\mathfrak{B} \\ 
\boldsymbol{F}(\boldsymbol{x})=\boldsymbol{0} }} \Lambda (\boldsymbol{x}),
\end{equation*}
where
$\Lambda (\boldsymbol{x})=\prod^n_{i=1}\Lambda(x_i)$ for $\boldsymbol{x} \in \mathbb{N}^n$ 
with $\Lambda (\cdot)$ being the von Mangoldt function. Clearly this $N_{\boldsymbol{F}}(P)$ denotes 
the weighted number of prime solutions to \eqref{equationF=0}  within the box $P\mathfrak{B}$. 
The following theorem is a quantitative version of Theorem~\ref{Thm1}. 

\begin{theorem}\label{Thm2}
Let $\boldsymbol{F}=(F_{i,d})_{\substack{d \in \Delta \\ 1\le i\le r_d}}$ be a nonsingular system of 
forms in $\mathbb{Z}[x_1,\ldots,x_n]$, $D$ be as in \eqref{def/C/D} and $\mathcal{D}$ be as in \eqref{def/R/calD}. 
Suppose that 
\begin{equation*}
n \geq D^2 4^{D+6} R^5. 
\end{equation*}
Then, for any positive constant $A$, 
\begin{align*}
N_{\boldsymbol{F}}(P)= \mathfrak{S}_{\boldsymbol{F}}\mathfrak{J}_{\boldsymbol{F}} 
P^{n-\mathcal{D} }+O(P^{n-\mathcal{D}}(\log P)^{-A}),
\end{align*}
where $\mathfrak{S}_{\boldsymbol{F}}$ and $\mathfrak{J}_{\boldsymbol{F}}$ are the singular series and singular integral 
associated to \eqref{equationF=0} defined as in \eqref{S=prod sigma p} and \eqref{Def/Sin/Int} respectively. 
\end{theorem}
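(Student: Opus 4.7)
The plan is to prove Theorem~\ref{Thm2} by a Hardy-Littlewood circle method with enlarged major arcs, extending the Liu-Zhao treatment of the equal-degree setting to forms of mixed degree. Define the generating function
$$
S(\boldsymbol{\alpha}) = \sum_{\boldsymbol{x}\in P\mathfrak{B}} \Lambda(\boldsymbol{x})\, e(\boldsymbol{\alpha}\cdot \boldsymbol{F}(\boldsymbol{x})),
$$
split $\boldsymbol{\alpha}\in[0,1)^R$ into blocks $\boldsymbol{\alpha}_d\in[0,1)^{r_d}$ indexed by the occurring degrees $d\in\Delta$, and use a rescaled Farey-type dissection of the form $|\alpha_{i,d} - a_{i,d}/q| \le P^{-d}Q$ to define major arcs $\mathfrak{M}$ whose widths are tuned separately on each degree block. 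Writing $N_{\boldsymbol{F}}(P) = \int_{\mathfrak{M}} S + \int_{\mathfrak{m}} S$, the goal becomes to isolate the main term $\mathfrak{S}_{\boldsymbol{F}}\mathfrak{J}_{\boldsymbol{F}} P^{n-\mathcal{D}}$ from the major arcs and to bound the minor arc contribution by $P^{n-\mathcal{D}}(\log P)^{-A}$.

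The major arc analysis proceeds by first approximating $\Lambda$ via Siegel-Walfisz on short progressions modulo $q$, which factorizes $S$ on $\mathfrak{M}$ into an arithmetic piece (leading to a truncated singular series) and an analytic piece (leading to a truncated singular integral). Standard completion arguments then extend these to the full $\mathfrak{S}_{\boldsymbol{F}}$ and $\mathfrak{J}_{\boldsymbol{F}}$ of the statement, with the truncation error absorbed into the $(\log P)^{-A}$ remainder. Absolute convergence of $\mathfrak{S}_{\boldsymbol{F}}$ and finiteness of $\mathfrak{J}_{\boldsymbol{F}}$ both require good point-count bounds modulo $p$ and on short real intervals; these are inherited from the nonsingularity of $\boldsymbol{F}$ through Deligne-type estimates on reductions and the implicit function theorem respectively. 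Bookkeeping across the blocks must be done carefully so the per-block error terms combine correctly, since the effective length parameter for block $d$ is $P^{d}$ rather than $P$.

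For the minor arcs I would proceed in two stages. First, apply a Vaughan or Heath-Brown identity to decompose $\Lambda$ into Type~I and Type~II sums, reducing the prime-variable problem to unweighted exponential sums $T(\boldsymbol{\alpha}) = \sum_{\boldsymbol{x}\in P\mathfrak{B}} e(\boldsymbol{\alpha}\cdot \boldsymbol{F}(\boldsymbol{x}))$ (with possible shifts and bilinear structure). Second, invoke the Davenport-Birch differencing procedure of Browning-Heath-Brown, differencing $D-1$ times on the top-degree block $\boldsymbol{\alpha}_D$ while carefully tracking how the lower-degree blocks degenerate at each step. The nonsingularity hypothesis, via an integer-point bound on the auxiliary variety cut out by the differenced Jacobian equations, then forces either a strong power saving in $P$ on $\mathfrak{m}$ or a good Diophantine approximation on some block, the latter case being absorbed into a slightly enlarged $\mathfrak{M}$.

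The main obstacle is the combined Weyl inequality across mixed degrees: each degree block contributes its own differencing cost, and the assumption $n \ge D^2 4^{D+6} R^5$ is precisely what is needed so that, after all $D-1$ differencings and the Cauchy-Schwarz loss inherent in the Type~II estimates, the codimension of the auxiliary variety still exceeds the accumulated loss, yielding a power saving that survives the Vaughan reduction. Getting the constants to line up uniformly for all $\boldsymbol{\alpha}\in\mathfrak{m}$, and not merely on a degree-by-degree basis, is the delicate point; the role of the enlarged major arcs is to shift part of the difficulty off the minor arcs and onto a slightly broader major-arc analysis where Siegel-Walfisz still applies, which is what ultimately produces the logarithmic saving in the statement.
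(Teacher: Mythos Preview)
Your outline has the right overall architecture, but there is a genuine gap in the major-arc analysis that would cause the argument to fail as written.

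You assert that on the major arcs one can approximate $\Lambda$ via Siegel--Walfisz, and later that the enlarged major arcs are ``where Siegel--Walfisz still applies.'' This is precisely what does \emph{not} happen here. The minor-arc saving coming from the Browning--Heath-Brown differencing combined with the prime-sum input only beats the trivial bound if the major arcs are taken up to $Q=P^{\varpi}$ for a fixed $\varpi>0$ (in the paper, $\varpi=\tfrac{1}{4(R+1)}$); a logarithmic cutoff $Q=(\log P)^B$ is not enough. But Siegel--Walfisz gives nothing for moduli as large as $P^{\varpi}$, so the major-arc treatment you sketch collapses. The paper addresses this by expanding each $\sum_x\Lambda(x)\chi(x)e(\cdots)$ through the explicit formula, and then controlling the resulting sums over zeros by a combination of large-sieve zero-density estimates (for the range where some conductor $k_j$ exceeds $(\log P)^B$) and Chudakov's zero-free region (when all $k_j$ are small). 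A further essential input you do not mention is a nontrivial average bound for the twisted Gauss sums
\[
\sideset{}{^{\dagger}}\sum_{\boldsymbol{a}\bmod q}\bigg|\,\sideset{}{^*}\sum_{\boldsymbol{h}\bmod q}\bar\chi_1\chi^0(h_1)\cdots\bar\chi_n\chi^0(h_n)\,e\!\bigg(\frac{\boldsymbol{a}\cdot\boldsymbol{F}(\boldsymbol{h})}{q}\bigg)\bigg|\ll q^{n-3/2},
\]
obtained not via Deligne but by reapplying the same mean-value machinery (Proposition~\ref{prop}) to a doubled system $\boldsymbol{F}^*$ in $2n$ variables. This is what converts the $k_0^{-3/2}$ saving into the logarithmic error term, and it is the step that actually consumes the full hypothesis $n\ge D^24^{D+6}R^5$ (the minor arcs only need $n\ge D^24^{D+2}R^5$).

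On the minor arcs your description is closer to the mark but still somewhat schematic: the paper does not simply run Vaughan and then difference. It splits the variables as $\boldsymbol{x}=(\boldsymbol{y},\boldsymbol{z},\boldsymbol{w})$, applies Cauchy--Schwarz twice to pull out a mean value $\mathcal{J}$ of the form \eqref{define J}, bounds $\mathcal{J}$ by the Browning--Heath-Brown alternative lemma on the integer-point sum in $\boldsymbol{y}$ and $(\boldsymbol{y},\boldsymbol{z})$ separately, and only at the very end invokes a prime-sum bound of Liu--Zhao type on the remaining $\boldsymbol{w}$-sum. Your ``power saving or good approximation'' dichotomy is correct in spirit, but the iteration over degrees (the sets $I_d^{(1)}$ and $I^{(2)}$ and the quantities $t_d$, $t_0$) is what makes the differing-degree bookkeeping go through, and it interacts with the choice of $X$ inside Proposition~\ref{prop} in a way your sketch does not capture.
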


Note that Theorem \ref{Thm2} reduces the number of required variables from an exponential tower function of degrees of $\boldsymbol{F}$   
in \cite{Yam18} to $D^2 4^{D+6} R^5$. 

Theorem \ref{Thm2} can be viewed as a smooth local-global principle for \eqref{equationF=0} in primes with differing degrees. 
If the local conditions \text{\rm (i)} and \text{\rm (ii)} in Theorem \ref{Thm1} are satisfied, 
then we have $\mathfrak{S}_{\boldsymbol{F}}>0$ and $\mathfrak{J}_{\boldsymbol{F}}>0$,  
and therefore Theorem \ref{Thm2} yields $N_{\boldsymbol{F}}(P)\gg P^{n-\mathcal{D}}$. From this,  
Theorem \ref{Thm1} follows in a similar way as in \cite[Corollary 2.3]{LiuSar}.  
Thus it remains only to prove Theorem \ref{Thm2}. 

Theorem~\ref{Thm2} is proved by the Hardy-Littlewood circle method, and the strategy is 
to implant the idea of Browning and Heath-Brown \cite{BroHB} for differing degrees into \cite{LiuZha23} where the degrees are the same. 
It turns out that the crucial mean-value estimate in \cite{LiuZha23} can be modified to accommodate differing degrees, at the expense that 
the major arcs of the circle method have to be enlarged considerably up to $P^\varpi$ where $\varpi$ is a positive 
constant, which causes extra difficulties. An outline of the proof will be explained further in the next section. 

\section{Outline of the proof of Theorem \ref{Thm2}}
Now we can explain the proof of Theorem \ref{Thm2} in more details.  
Write 
\begin{equation*}
\boldsymbol{\alpha}=(\alpha_{i,d})_{\substack{d\in \Delta \\ 1\le i\le r_d}}\in (0, 1]^R 
\end{equation*}
whose dimension is $R$ as in \eqref{def/R}, and define
\begin{equation}\label{def/SFa}
S_{\boldsymbol{F}}(\boldsymbol{\alpha})
:=\sum_{\boldsymbol{x}\in P\mathfrak{B}} \Lambda (\boldsymbol{x})
e\bigg(\sum_{d\in \Delta}\sum_{i=1}^{r_d}\alpha_{i,d} F_{i,d}(\boldsymbol{x})\bigg). 
\end{equation}
The starting point of the circle method is the identity 
$$
N_{\boldsymbol{F}}(P)=\int_{(0,1]^R} S_{\boldsymbol{F}}(\boldsymbol{\alpha}) d\boldsymbol{\alpha}.
$$
And the idea is then to divide the cube $(0,1]^R$ into major arcs $\mathfrak{M} $ and minor arcs $\mathfrak{m} $.
As usual we hope to establish an asymptotic formula on the major arcs of the form 
\begin{equation*}
    \int_{\mathfrak{M} } S_{\boldsymbol{F}}(\boldsymbol{\alpha}) d\boldsymbol{\alpha}=
    \mathfrak{S}_{\boldsymbol{F}} \mathfrak{J}_{\boldsymbol{F}} P^{n-\mathcal{D}}+O(P^{n-\mathcal{D}}(\log P)^{-A})
\end{equation*}
for any fixed constant $A>0$, while simultaneously obtaining an appropriate upper bound on the minor arcs
\begin{equation*}
\int_{\mathfrak{m} } S_{\boldsymbol{F}}(\boldsymbol{\alpha}) d\boldsymbol{\alpha}
\ll P^{n-\mathcal{D}-\eta} 
\end{equation*}
for some absolute constant $\eta>0$.

Let $\varpi \in (0,\frac{1}{4})$ be a parameter that will be decided finally in \eqref{def/pi}, and put 
\begin{equation}\label{Def/vpi/Q} 
Q=P^{\varpi}. 
\end{equation}
The major arcs are defined as 
\begin{align}\label{define MQ}
\mathfrak{M}=\mathfrak{M}(Q)=\bigcup_{1\le q\le Q}\bigcup_{\substack{1\le a_1,\ldots,a_R\le q
     \\ (a_1,\ldots,a_R,q)=1}}\mathfrak{M}(q,\boldsymbol{a};Q),
\end{align}
where
\begin{align*}
    \mathfrak{M}(q,\boldsymbol{a};Q)
=\bigg\{(\alpha_{i,d})_{\substack{d \in \Delta \\1\le i\le r_d}} 
\in \R^R:\ \bigg|\alpha_{i,d}-\frac{a_{i,d}}{q}\bigg|\le \frac{Q}{qP^d} \ \textrm{ for all } i, d \bigg\}.
\end{align*}
The minor arcs are defined as the complement of $\mathfrak{M}$, i.e.,
\begin{align}\label{define mQ}
\mathfrak{m}=\mathfrak{m}(Q)=(0,1]^{R}\setminus\mathfrak{M}(Q). 
\end{align}
Note that $\varpi \in (0,\frac{1}{4})$ implies that $\mathfrak{M}(q,\boldsymbol{a};Q)\cap \mathfrak{M}(q',\boldsymbol{a}';Q)=\emptyset$
whenever  $\boldsymbol{a}/q\neq \boldsymbol{a}'/q'$, provided that $P$ is sufficiently large.

\medskip 

The circle method is an art of balancing between contributions from the major and minor arcs. 
On the minor arcs, we shall implant the idea of \cite{BroHB} for differing degrees into \cite{LiuZha23} 
where the degrees are the same. A crucial step in \cite{LiuZha23} is an estimate for the integral  
\begin{equation}\label{m/v/e/} 
\int_{\mathfrak{n}}S_{\boldsymbol{F}}(\boldsymbol{\alpha})d\boldsymbol{\alpha}
\end{equation}
where $\mathfrak{n}$ is a measurable set. In particular, this controls  
the contribution from the minor arcs, which is enough for \cite{LiuZha23}. 
For the setting of the present paper, however,  
we need to combine the strategy in \cite{BroHB} with the above route in \cite{LiuZha23} to get 
a desired estimate for \eqref{m/v/e/} with $\boldsymbol{F}$ having differing degrees. 
Lemma \ref{lemma mean J} and Proposition \ref{prop} in this paper are of special importance. In order this idea to work, 
we have to pay the price that the major arcs of the circle method have to be enlarged considerably, that is we have to take $Q=P^\varpi$ 
as in \eqref{Def/vpi/Q}. The classical choice $Q=(\log P)^B$ is not sufficient to produce any meaningful saving. 

Therefore the major arcs $\mathfrak{M}$ in \eqref{define MQ} are quite large in the sense that the Siegel-Walfisz 
theorem cannot be extended to moduli $q$ 
up to any positive power of $P$. 
For a single diagonal equation, the integral on the enlarged major arcs have successfully been attacked  
in many occasions such as \cite{LiuZha98} \cite{Liu03} \cite{Liu12} by the large sieve, zero-density estimates, 
as well as Chudakov's zero-free region for Dirichlet $L$-functions. Here the situation is much more complicated, 
and the difficulty is overcome not only by repeated applications of the ideas before, but also by a new insight 
to get cancellation in sums of Gauss 
sums involving Dirichlet characters and the system $\boldsymbol{F}$. See Lemma \ref{lem/bound/gauss} 
for an explicit saving. 

\medskip 

The paper is organized as follows. 
We quote lemmas for forms in integral variables and differing degrees in \S3.
Then we prepare some technical mean-value estimates in \S\S4-6. 
Following that, in \S7, we deal with the contribution from the minor arcs.
Next \S\S8-9 handle the contribution from the major arcs, and explains  
the meaning of the singular series and singular integral. 
Finally, we complete the proof of Theorem \ref{Thm2} in \S9. 

\section{Forms in integral variables and differing degrees}
We quote two lemmas from \cite{BroHB} that are necessary for 
handling differing degrees, 
and for simplicity we keep the notations of \cite{BroHB}.  Define the matrix
\[J_{\boldsymbol{F}, d}(\boldsymbol{x}):=\left(\begin{array}{c}\nabla  F_{1,d}(\boldsymbol{x})\\ \vdots \\
\nabla F_{r_d,d}(\boldsymbol{x})\end{array}\right)  \quad (d \in \Delta)\]
and the affine algebraic variety
\[S_d(n, \boldsymbol{F}):=\{\boldsymbol{x}\in\mathbb{A} ^{n}:\rank(J_{\boldsymbol{F}, d}(\boldsymbol{x}))<r_d\} \quad (d \in \Delta).\]
Moreover, we set 
\begin{equation}\label{def Bd}
B_d(n, \boldsymbol{F}):=\dim S_d(n, \boldsymbol{F}) \quad (d \in \Delta)
\end{equation}
in the sense of Birch. One sees that $B_d(n, \boldsymbol{F})<n$ for all $d$ if $\boldsymbol{F}$ is nonsingular.
When $r_d=0$, we shall take $B_d(n, \boldsymbol{F})=0$. 

We then let
\begin{equation*}
    \mathcal{D}_j:=\sum_{\substack{d\le j\\ d\in \Delta} }dr_d=\sum^j_{d=1 }dr_d \quad (1\le j\le D), 
\end{equation*}
and we put $\mathcal{D}_0:=0$.
Wirte
\begin{equation}\label{def sd}
    s_d(n, \boldsymbol{F}):=\sum_{i=d}^D \frac{2^{i-1}(i-1)r_i}{n-B_i(n, \boldsymbol{F})} \quad (1\le d\le D). 
\end{equation}
One simply checks that $s_1(n, \boldsymbol{F})=s_C(n, \boldsymbol{F})=\max_{d \in \Delta}s_d(n, \boldsymbol{F})$.

\begin{definition}
    We say $n$ is admissible for $\boldsymbol{F}$ if $n$ satisfies
    \begin{equation}\label{admissible}
        \mathcal{D}_d\bigg(\frac{2^{d-1}}{n-B_d(n, \boldsymbol{F})} +s_{d+1}(n, \boldsymbol{F})\bigg)
        +s_{d+1}(n, \boldsymbol{F})+\sum_{j=d+1}^D s_j(n, \boldsymbol{F})r_j<1 
    \end{equation}
    for $d=0$ and for every $d\in\Delta$. 
\end{definition}

Let 
\begin{equation*}
\Sigma (\boldsymbol{\alpha} ) =  \sum_{\boldsymbol{x}\in P\mathfrak{B}} 
e\bigg( \sum_{d \in \Delta}\sum^{r_d}_{i=1}(\alpha_{i,d}F_{i,d}(\boldsymbol{x})+V_{i,d}(\boldsymbol{x}) )\bigg),  
\end{equation*}
and write 
$$
  |\Sigma (\boldsymbol{\alpha} )|=P^nL,
$$
where $F_{i,d}$ is a form of degree $d$ and $V_{i,d}$ is a polynomial with $\deg(V_{i,d})<d$ for all $i,d$.
This $\Sigma (\boldsymbol{\alpha})$ is the same as in \cite[\S\S5-6]{BroHB} except for the additional terms $V_{i,d}$ of lower degrees. We have 
the following alternative lemma. 

\begin{lemma}\label{lemma alternative}
    If $P$ is large enough, either
    $$
        L^{2^{D-1}} \le P^{B_D(n, \boldsymbol{F})-n}(\log P)^{n+1},
    $$
    or there is a $q_D\le Q_D$ with
    $$ Q_D(n, \boldsymbol{F}):=((\log P)^{n+1}L^{-2^{D-1}})^{\frac{(D-1)r_D}{n-B_D(n, \boldsymbol{F})}}\log P$$
    such that
    \[\|q_D\alpha_{i,D}\|\le Q_DP^{-D} \quad (1\le i\le r_D).\]
    If the first case holds, we simply halt. 
    Otherwise for degree
    \[D':=\max\{d\in\Delta:\,d<D\},\]
    we then have either
    $$     
        L^{2^{D'-1}}\le (Q_D/P)^{n-B_{D'}(n, \boldsymbol{F})}(\log P)^{n+1},
    $$
    or there is a $q_{D'}:=q_Dq^*\le Q_{D'}:=Q_DQ^*$
    with 
    $$ Q^*(n, \boldsymbol{F}):= ((\log P)^{n+1}L^{-2^{D'-1}})^{\frac{(D'-1)r_{D'}}{n-B_{D'}(n, \boldsymbol{F})}}\log P $$
    such that
    \[\|q_{D'}\alpha_{i,D'}\|\le Q_{D'}P^{-D'} \quad (1\le i\le r_{D'}).\]
    Recurrence in this way, we produce a succession of values $Q_d$ for decreasing values of $d \in \Delta $ with
    \begin{equation}\label{Qd form}
        Q_d(n, \boldsymbol{F}):=(\log P)^{e_d(n, \boldsymbol{F})}L^{-s_d(n, \boldsymbol{F})}\;\;\;(d\in\Delta),
    \end{equation}
    where $e_d(n, \boldsymbol{F})$ is some easily computed but unimportant exponent, and $s_d(n, \boldsymbol{F})$ is given by \eqref{def sd}.
\end{lemma}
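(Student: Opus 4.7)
The plan is to mimic the iterated Weyl differencing scheme of Browning and Heath-Brown \cite[\S\S5--6]{BroHB}, the only genuine novelty being to check that the lower-degree perturbations $V_{i,d}$ do no damage at any stage of the recursion. Throughout, I will treat the two alternatives at each level uniformly: at the top degree currently in play, perform $d-1$ rounds of Weyl differencing, use Birch's dimensional lemma on the singular locus to bound the resulting multilinear sum, and conclude either a small-$L$ bound or a simultaneous rational approximation to the top-degree coefficients; if the latter occurs, rationalise and reduce to a sum of the same shape but with the top degree decreased to $D'=\max\{d\in\Delta:d<D\}$.

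First I would treat degree $D$. Applying Weyl differencing $D-1$ times to $\Sigma(\boldsymbol{\alpha})$ and using Cauchy--Schwarz in the usual way, one gets $L^{2^{D-1}}$ bounded by an average over difference vectors $\boldsymbol{h}_1,\ldots,\boldsymbol{h}_{D-1}$ of an exponential sum whose phase is the multilinear form $\sum_{i=1}^{r_D}\alpha_{i,D}\Psi_{i,D}(\boldsymbol{h}_1,\ldots,\boldsymbol{h}_{D-1};\boldsymbol{x})$ associated to the top-degree forms. The key observation that allows me to carry over the $\boldsymbol{F}$-only argument of \cite{BroHB} verbatim is that the perturbations $V_{i,d}$ of degree strictly less than $d$ become polynomials of degree $\deg V_{i,d}-(D-1)$ in $\boldsymbol{x}$ after $D-1$ differencings: for $d<D$ these vanish identically, and for $d=D$ they become constants in $\boldsymbol{x}$, contributing only a unimodular factor that can be pulled outside. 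Similarly the forms $F_{i,d}$ with $d<D$ get killed. Birch's geometric lemma, controlling the number of $\boldsymbol{h}$-tuples on which the multilinear form is degenerate, involves the singular locus dimension $B_D(n,\boldsymbol{F})$, and a standard Dirichlet pigeonhole step then yields the first alternative: either $L^{2^{D-1}}\ll P^{B_D-n}(\log P)^{n+1}$, or there is $q_D\le Q_D$ with $\|q_D\alpha_{i,D}\|\le Q_DP^{-D}$ for every $1\le i\le r_D$.

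If the second alternative holds, I would set $\alpha_{i,D}=a_{i,D}/q_D+\beta_{i,D}$ with $|\beta_{i,D}|\le Q_DP^{-D}/q_D$, split $\boldsymbol{x}=q_D\boldsymbol{y}+\boldsymbol{r}$ across residues mod $q_D$, and rewrite $\Sigma(\boldsymbol{\alpha})$. The rational part $(a_{i,D}/q_D)F_{i,D}(\boldsymbol{r})$ depends only on $\boldsymbol{r}$ and produces a bounded phase factor under the sum over $\boldsymbol{r}$; the $\beta$-part $\beta_{i,D}F_{i,D}(q_D\boldsymbol{y}+\boldsymbol{r})$ expands, upon setting $\boldsymbol{y}\in (P/q_D)\mathfrak{B}'$, into a polynomial in $\boldsymbol{y}$ of degree $D$ whose coefficients are all of size at most $Q_D$ in the rescaled variable. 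Accordingly the entire degree-$D$ contribution in the new $\boldsymbol{y}$-sum can be absorbed into \emph{new} lower-degree perturbations $V'_{i,d}$ together with a harmless scalar loss of $Q_D/P$ coming from the shrunken range of $\boldsymbol{y}$. The sum over $\boldsymbol{y}$ is then of exactly the form $\Sigma$, but now with maximal degree $D'$ and with the parameter $P$ effectively replaced by $P/q_D$. Applying the first step with $D$ replaced by $D'$ delivers the stated alternative at level $D'$, with $Q^*$ equal to the corresponding Weyl quantity evaluated on the new sum; the factor $(Q_D/P)^{n-B_{D'}}$ in the first alternative at this level reflects the reduced range of summation. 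Continuing the recursion produces the sequence $(Q_d)_{d\in\Delta}$ of the shape \eqref{Qd form}, with the powers $e_d(n,\boldsymbol{F})$ and $s_d(n,\boldsymbol{F})$ determined by unwinding the telescoping products.

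The main obstacle I anticipate is the bookkeeping in the iteration: ensuring that after each rationalisation the residual $\beta$-terms of top degree can genuinely be incorporated into the lower-degree perturbations without enlarging $L$ or destroying the scaling, and that the accumulated $Q_{D},Q_{D'},\ldots$ combine to the single exponent $s_d(n,\boldsymbol{F})$ in \eqref{def sd}. Once the recursive substitution is set up carefully, the underlying Weyl-differencing plus dimensional-lemma step at each level is identical to \cite[Lemmas~5.2--5.4]{BroHB}, and the presence of the lower-degree polynomials $V_{i,d}$ is absorbed into the definition of the sum at each stage rather than creating new analytic difficulties.
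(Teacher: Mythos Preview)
Your proposal is correct and is essentially the paper's approach: the paper simply cites \cite[Lemmas~5.2 and~6.1]{BroHB}, the implicit point being exactly your observation that the extra lower-degree polynomials $V_{i,d}$ are annihilated (or reduced to unimodular constants) after the $D-1$ rounds of Weyl differencing at each stage, so the Browning--Heath-Brown iteration goes through verbatim. Your outline of the rationalisation step and the resulting telescoping producing $s_d(n,\boldsymbol{F})$ is precisely the content of \cite[\S6]{BroHB}.
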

\begin{proof}
This follows from \cite[Lemmas 5.2 and 6.1]{BroHB}. 
\end{proof}

When $1\le j\le D$ but $j\not\in\Delta$, it is convenient to let
$Q_j=Q_k$ and $q_j=q_k$, where $k=\min_{d>j}d$. We shall put $Q_{D+1}=1$.
It follows that we have $q_j\le Q_j$ and $q_{j+1}|q_j $ in general.
In view of \eqref{def sd}, we can get $s_j(n, \boldsymbol{F})=s_k(n, \boldsymbol{F})$.
Thus
\eqref{Qd form} extends to 
\begin{equation*}
    Q_d(n, \boldsymbol{F})=(\log P)^{e_d(n, \boldsymbol{F})}L^{-s_d(n, \boldsymbol{F})} \;\;\;(1\le d\le D)
\end{equation*}
for appropriate exponents $e_d(n, \boldsymbol{F})$. The explicit expression of $e_d(n, \boldsymbol{F})$ is not important.  
Now for any degree $j \in \Delta$, as iterating, we will either obtain a bound
\begin{equation}\label{Ine/LQP}
    L^{2^{j-1}} \leq (Q_{j+1}/P)^{n-B_j(n, \boldsymbol{F})}(\log P)^{n+1},
\end{equation}
or find a positive integer $q_j$ satisfying
\begin{equation}\label{good rat appr}
   q_k\vert q_j \ \ (k>j, k \in \Delta), \ \ q_j \le Q_j, \ \ \| q_j \alpha_{i,j}\| \le Q_jP^{-j} \ \ (1\le i\le r_j). 
\end{equation}

Next we subdivide the minor arcs, as guided by Lemma \ref{lemma alternative}, 
into sets $I^{(1)}_d(n, \boldsymbol{F})$ for $d\in \Delta$ and $I^{(2)}(n, \boldsymbol{F})$ as follows. 
First the subset $I^{(1)}_d(n, \boldsymbol{F})$ of $\mathfrak{m}(Q)$  consists those $\boldsymbol{\alpha}$ such that the inequality \eqref{Ine/LQP} fails for all $j>d$, but holds for $j=d$. And the subset $I^{(2)}(n, \boldsymbol{F})$ of $\mathfrak{m}(Q)$ consists of the remaining 
$R$-tuples $\boldsymbol{\alpha}$ for which \eqref{Ine/LQP} fails for all $j\in \Delta$. 

\begin{lemma}
Let $d \in \Delta$.
If $\boldsymbol{\alpha} \in I^{(1)}_d(n, \boldsymbol{F})$ then
\begin{equation}\label{L bound in I1d}
        L^{2^{d-1}+(n-B_d(n, \boldsymbol{F}))s_{d+1}(n, \boldsymbol{F})}\ll P^{B_d(n, \boldsymbol{F})-n+\varepsilon}, 
\end{equation}
and each $\alpha_{i,j}$ has a rational approximation as in \eqref{good rat appr} for $j>d, 1\le i\le r_j$. 
    
If $\boldsymbol{\alpha} \in I^{(2)}(n, \boldsymbol{F})$, then
    \begin{equation}\label{L bound in I2}
        L \ll Q^{-\frac{1}{4s_1(n, \boldsymbol{F})}},
    \end{equation}
    where $s_{1}(n, \boldsymbol{F})=\max_{d\in \Delta}s_d(n, \boldsymbol{F})$ and every $\alpha_{i,j}$ has a rational approximation as in \eqref{good rat appr}.
\end{lemma}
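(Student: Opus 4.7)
\medskip

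My plan is to read off both statements directly from the iteration in Lemma \ref{lemma alternative}, specializing the dichotomy at each stage to the definitions of $I^{(1)}_d(n,\boldsymbol{F})$ and $I^{(2)}(n,\boldsymbol{F})$.

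For the case $\boldsymbol{\alpha}\in I^{(1)}_d(n,\boldsymbol{F})$: by definition \eqref{Ine/LQP} fails for every $j\in\Delta$ with $j>d$, so at each such stage the alternative in Lemma \ref{lemma alternative} forces the existence of a rational approximation satisfying \eqref{good rat appr}, and the divisibility $q_k\mid q_j$ propagates automatically from the recursion. Since \eqref{Ine/LQP} then holds at $j=d$, I would substitute the explicit form $Q_{d+1}(n,\boldsymbol{F})=(\log P)^{e_{d+1}(n,\boldsymbol{F})}L^{-s_{d+1}(n,\boldsymbol{F})}$ into the bound $L^{2^{d-1}}\le (Q_{d+1}/P)^{n-B_d(n,\boldsymbol{F})}(\log P)^{n+1}$ and collect powers of $L$ on the left-hand side. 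The resulting inequality absorbs all the logarithmic factors into the $P^{\varepsilon}$ and yields \eqref{L bound in I1d}. This step is routine algebra.

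For the case $\boldsymbol{\alpha}\in I^{(2)}(n,\boldsymbol{F})$: by definition \eqref{Ine/LQP} fails at every degree $j\in\Delta$, so iterating Lemma \ref{lemma alternative} all the way down produces simultaneous rational approximations \eqref{good rat appr} at every $j\in\Delta$, together with the full divisibility chain $q_D\mid q_{D'}\mid\cdots\mid q_C$. The strategy for obtaining \eqref{L bound in I2} is by contradiction: assume $L\gg Q^{-1/(4s_1(n,\boldsymbol{F}))}$ and use the common denominator $q:=q_C$ to check that $\boldsymbol{\alpha}$ lies in the major arcs $\mathfrak{M}(Q)$, violating $\boldsymbol{\alpha}\in\mathfrak{m}(Q)$. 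Concretely, since $q_j\mid q_C$ for every $j\in\Delta$, the inequality $\|q_C\alpha_{i,j}\|\le (q_C/q_j)Q_jP^{-j}\le q_C Q_jP^{-j}$ holds, and both $q_C$ and $q_CQ_j$ are controlled by $Q_CQ_j=(\log P)^{e_C(n,\boldsymbol{F})+e_j(n,\boldsymbol{F})}L^{-s_C(n,\boldsymbol{F})-s_j(n,\boldsymbol{F})}$. Since $s_1(n,\boldsymbol{F})=s_C(n,\boldsymbol{F})=\max_{d\in\Delta}s_d(n,\boldsymbol{F})$, the assumed lower bound on $L$ gives $L^{-s_C(n,\boldsymbol{F})-s_j(n,\boldsymbol{F})}\ll Q^{1/2}$, so that $q_C\le Q$ and $q_CQ_j\le Q$ once $P$ is large enough to swallow the logarithmic factors. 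This places $\boldsymbol{\alpha}$ in $\mathfrak{M}(Q)$, the desired contradiction.

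The most delicate point is the book-keeping in the second case, namely verifying simultaneously that $q_C\le Q$ and $\|q_C\alpha_{i,j}\|\le QP^{-j}$ hold for every degree $j\in\Delta$, not merely for $j=C$. The divisibility chain $q_j\mid q_C$ is precisely what lets the single denominator $q_C$ serve all degrees at once, and the identification $s_C(n,\boldsymbol{F})=\max_{d\in\Delta}s_d(n,\boldsymbol{F})$ is what makes the exponent $1/(4s_1(n,\boldsymbol{F}))$ uniform in $j$. Once these two structural facts are in hand, the estimates reduce to inserting the explicit shape \eqref{Qd form} of $Q_d(n,\boldsymbol{F})$ and comparing exponents of $L$, which is the main obstacle to watch for but is essentially bookkeeping.
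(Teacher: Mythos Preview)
Your proposal is correct and follows the same approach as the paper, which simply cites \cite[Lemma 6.2 and (7.4)]{BroHB} for both parts; your argument is essentially a faithful unpacking of that reference. The only point worth tightening is in the second case: when you bound $\|q_C\alpha_{i,j}\|\le (q_C/q_j)Q_jP^{-j}$, you should use $q_C\le Q_C$ directly to get $(q_C/q_j)Q_j\le Q_CQ_j$ rather than the looser $q_CQ_j$, but since you immediately invoke $Q_CQ_j$ anyway this is cosmetic.
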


\begin{proof} 
This follows from \cite[Lemma 6.2 and (7.4)]{BroHB}. 
\end{proof} 

\section{A mean-value result} 
The mean-value result that we are going to establish is Lemma \ref{lemma mean J} below. 
In fact, this section is preparatory for the next section. 
 
\begin{definition}\label{Def/41}
We define the following quantities of power saving
\begin{equation}\label{def td}
        t_d(n, \boldsymbol{F}):=\frac{1-s_{d+1}(n, \boldsymbol{F})-\sum_{j=d+1}^D s_j(n, \boldsymbol{F})r_j}{\frac{2^{d-1}}{n-B_d(n, \boldsymbol{F})} +s_{d+1}(n, \boldsymbol{F})}-\mathcal{D}_d \quad (d \in \Delta)
\end{equation}
and
\begin{equation}\label{def t0}
        t_0(n, \boldsymbol{F}):=1-s_1(n, \boldsymbol{F})-\sum_{j=1}^D s_j(n, \boldsymbol{F})r_j.
\end{equation}
\end{definition}

Recall $s_d(n, \boldsymbol{F})$ is as in \eqref{def sd} for each $d$. It is clear that $t_d(n, \boldsymbol{F})>0$ for all $d$ 
is equivalent to that $n$ is admissible for $\boldsymbol{F}$. 

Next we define some objects that have been used in \cite[\S4]{LiuZha23}. 
Suppose that $h_{i,d}(\boldsymbol{x},\boldsymbol{w})$ is a polynomial of $(\boldsymbol{x},\boldsymbol{w})$ with
$\deg_{\boldsymbol{x}}(h_{i,d})<d$, and write 
$$
\boldsymbol{h}=(h_1,\ldots,h_R)=(h_{i,d})_{\substack{d \in \Delta \\ 1\le i\le r_d}}. 
$$ 
Let $\mathcal{B}_m(P)$ be the box in $m$-dimensional space defined by
\begin{align*}
    b_j'P< x_j\le b_{j}''P \quad (1\le j\le m),
\end{align*}
where $0<b_j'<b_{j}''<1$ are fixed constants.
For $\boldsymbol{\alpha}\in \R^{R}$ and $\boldsymbol{x}\in \Z^k$, we define 
\begin{align*}
    \mathcal{E}(\boldsymbol{\alpha};\boldsymbol{x})=\sum_{\boldsymbol{w} \in \mathcal{B}_t(P)}\lambda(\boldsymbol{w})e(\boldsymbol{\alpha}\cdot \boldsymbol{h}(\boldsymbol{x},\boldsymbol{w})),
\end{align*}
where $\boldsymbol{w}\in \N^{t}$ and $\lambda(\boldsymbol{w})=\prod^t_{i=1}\lambda(w_i) $ with $\lambda(\cdot) \ll \log (\cdot)$.
When $t=0$, we shall view $\mathcal{E}\equiv 1$.
Similarly we define
\begin{align*}
    T(\boldsymbol{\alpha};\boldsymbol{x})=\sum_{ \boldsymbol{u} \in \mathcal{B}_l(P)}\lambda(\boldsymbol{u})e(\boldsymbol{\alpha}\cdot \boldsymbol{H}(\boldsymbol{x},\boldsymbol{u})),
\end{align*}
where $\boldsymbol{u}\in \N^{l}$ and $H_{i,d}(\boldsymbol{x},\boldsymbol{u})$ is a polynomial of $(\boldsymbol{x},\boldsymbol{u})$ with
$\deg_{\boldsymbol{x}}(H_{i,d})<d$. 
We are going to investigate the moment
\begin{align}\label{define J}
    \mathcal{J}:=\mathcal{J}_{\mathfrak{n}} 
    =\sum_{\boldsymbol{x}\in \mathcal{B}_k(P)}
    \bigg|\int_{\mathfrak{n}}e(\boldsymbol{\alpha} \cdot \boldsymbol{g}(\boldsymbol{x}))\mathcal{E}(\boldsymbol{\alpha};\boldsymbol{x})
    T(\boldsymbol{\alpha};\boldsymbol{x})d\boldsymbol{\alpha}\bigg|^2,
\end{align}
where $\boldsymbol{g}=(g_1,\ldots,g_R)=(g_{i,d})_{\substack{d \in \Delta \\ 1\le i\le r_d}}$ 
with $g_{i,d}$ being forms of degree $d$ for all $i, d$, and $\mathfrak{n}$ is an $R$-dimensional Lebesgue 
measurable set. 

\begin{lemma}\label{lemma mean J} 
Let $\mathcal{J}$ be as in \eqref{define J}. 
Suppose that $k$ is admissible for $\boldsymbol{g}$ and $X\le Q$. Then
\begin{align*}
        \mathcal{J} \ll_{\boldsymbol{g}} & \, P^{k+2t+2l-\mathcal{D}+\varepsilon- \min_{d\in \Delta}{t_d(k, \boldsymbol{g})} } |\mathfrak{n}|  + P^{k+2t+2l-\mathcal{D}+\varepsilon}X^{-\frac{t_0(k, \boldsymbol{g})}{4s_{1}(k, \boldsymbol{g})}} |\mathfrak{n}| \\
        & + X^{R+1}P^{-\mathcal{D} }\sum_{\boldsymbol{x}\in \mathcal{B}_k(P)}\int_{\mathfrak{n}}|\mathcal{E}(\boldsymbol{\alpha};\boldsymbol{x})T(\boldsymbol{\alpha};\boldsymbol{x})|^2d\boldsymbol{\alpha},
\end{align*}
where $t_d(k, \boldsymbol{g})$ are quantities of power saving 
as in Definition \ref{Def/41}, and $|\mathfrak{n}|$ is the Lebesgue measure of $\mathfrak{n}$. 
\end{lemma}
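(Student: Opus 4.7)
The plan is to expand the modulus squared in $\mathcal{J}$, recognise the inner $\boldsymbol{x}$-sum as an instance of $\Sigma(\boldsymbol{\gamma})$ with $\boldsymbol{\gamma}=\boldsymbol{\alpha}-\boldsymbol{\beta}$ (plus lower $\boldsymbol{x}$-degree perturbations), and partition $\mathfrak{n}\times\mathfrak{n}$ according to the dichotomy provided by Lemma \ref{lemma alternative} applied to this inner sum. Unfolding $\mathcal{E}$ and $T$ via their definitions, one obtains
$$\mathcal{J}=\int_{\mathfrak{n}}\int_{\mathfrak{n}}\sum_{\boldsymbol{w},\boldsymbol{w}'}\sum_{\boldsymbol{u},\boldsymbol{u}'}\lambda(\boldsymbol{w})\overline{\lambda(\boldsymbol{w}')}\lambda(\boldsymbol{u})\overline{\lambda(\boldsymbol{u}')}\,\Phi\,d\boldsymbol{\alpha}\,d\boldsymbol{\beta},$$
where $\Phi=\sum_{\boldsymbol{x}\in\mathcal{B}_k(P)}e\bigl((\boldsymbol{\alpha}-\boldsymbol{\beta})\cdot\boldsymbol{g}(\boldsymbol{x})+V(\boldsymbol{x})\bigr)$ and $V(\boldsymbol{x})$, depending on $\boldsymbol{\beta},\boldsymbol{w},\boldsymbol{w}',\boldsymbol{u},\boldsymbol{u}'$, has $\boldsymbol{x}$-degree strictly below $d$ in each degree-$d$ block since $\deg_{\boldsymbol{x}}h_{i,d},\deg_{\boldsymbol{x}}H_{i,d}<d$ by assumption. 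Hence $|\Phi|=P^kL(\boldsymbol{\gamma})$, with $L$ satisfying the conclusions of Lemma \ref{lemma alternative} applied to $\boldsymbol{g}$ in $k$ variables uniformly in the other parameters; admissibility of $k$ for $\boldsymbol{g}$ guarantees $t_d(k,\boldsymbol{g})>0$ for every $d$, so the alternative is meaningful.

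I then split $\mathfrak{n}\times\mathfrak{n}$ into three pieces according to the outcome: (i) $\boldsymbol{\gamma}\in I^{(1)}_d(k,\boldsymbol{g})$ for some $d\in\Delta$, (ii) $\boldsymbol{\gamma}\in I^{(2)}(k,\boldsymbol{g})$, or (iii) the $X$-major set $\mathfrak{M}_X$ where every iterated denominator $q_j$ is $\le X$ (allowed since $X\le Q$). On piece (i), \eqref{L bound in I1d} combined with \eqref{def td} gives the power-saving estimate $L\ll P^{-\mathcal{D}_d-t_d(k,\boldsymbol{g})+\varepsilon}$; in addition, $\boldsymbol{\gamma}$ is forced into a union of major-arc boxes in the degree-$j$ variables ($j>d$) of total measure $\ll P^{-(\mathcal{D}-\mathcal{D}_d)+\varepsilon}$, so the $(\boldsymbol{\alpha},\boldsymbol{\beta})$-integration contributes $\ll P^{-\mathcal{D}}|\mathfrak{n}|$ beyond the $L$-saving. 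Summing over $\boldsymbol{w},\boldsymbol{w}',\boldsymbol{u},\boldsymbol{u}'$ yields a factor $P^{2t+2l}$ up to logarithms, and taking the minimum saving over $d\in\Delta$ produces the first term of the lemma. Piece (ii) is treated analogously using \eqref{L bound in I2}, \eqref{def t0}, the major-arc restriction in every degree (contributing $P^{-\mathcal{D}}$), and $X\le Q$, giving the second term.

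On the $X$-major piece (iii), each $\boldsymbol{\gamma}\in\mathfrak{M}_X$ admits a representation $\boldsymbol{\gamma}=\boldsymbol{a}/q+\boldsymbol{\theta}$ with common denominator $q\le X$ (via the divisibility $q_k\mid q_j$ of \eqref{good rat appr}) and $|\theta_{i,d}|\ll XP^{-d}/q$; summing $q^R(X/q)^RP^{-\mathcal{D}}=X^RP^{-\mathcal{D}}$ over $q\le X$ yields $|\mathfrak{M}_X|\ll X^{R+1}P^{-\mathcal{D}}$. On the corresponding subset I return to the unexpanded integrand and apply the elementary inequality
$$2|\mathcal{E}(\boldsymbol{\alpha};\boldsymbol{x})T(\boldsymbol{\alpha};\boldsymbol{x})\overline{\mathcal{E}(\boldsymbol{\beta};\boldsymbol{x})T(\boldsymbol{\beta};\boldsymbol{x})}|\le|\mathcal{E}(\boldsymbol{\alpha};\boldsymbol{x})T(\boldsymbol{\alpha};\boldsymbol{x})|^2+|\mathcal{E}(\boldsymbol{\beta};\boldsymbol{x})T(\boldsymbol{\beta};\boldsymbol{x})|^2,$$
interchange the $(\boldsymbol{\alpha},\boldsymbol{\beta})$-integration with the $\boldsymbol{x}$-sum, and exploit symmetry in $\boldsymbol{\alpha},\boldsymbol{\beta}$ to bound the contribution by
$$|\mathfrak{M}_X|\sum_{\boldsymbol{x}\in\mathcal{B}_k(P)}\int_{\mathfrak{n}}|\mathcal{E}(\boldsymbol{\alpha};\boldsymbol{x})T(\boldsymbol{\alpha};\boldsymbol{x})|^2\,d\boldsymbol{\alpha}\ll X^{R+1}P^{-\mathcal{D}}\sum_{\boldsymbol{x}\in\mathcal{B}_k(P)}\int_{\mathfrak{n}}|\mathcal{E}T|^2\,d\boldsymbol{\alpha},$$
which is precisely the third term.

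The main obstacle I anticipate is the bookkeeping in the minor-arc pieces: translating \eqref{L bound in I1d} and \eqref{L bound in I2} through the definitions \eqref{def sd}, \eqref{def td}, \eqref{def t0} into clean power savings for $L$, and then using the major-arc restrictions for the degrees with good rational approximations to supply the missing measure saving $P^{-(\mathcal{D}-\mathcal{D}_d)}$ in piece (i) (respectively $P^{-\mathcal{D}}$ in piece (ii)), so that the exponents of $P$ and $X$ combine exactly into the stated form. A secondary technicality is in step (iii): verifying that the $X^{R+1}P^{-\mathcal{D}}$ count genuinely arises from a single common denominator $q\le X$ dictated by the nested structure of \eqref{good rat appr}, rather than from independent approximations in each $(i,d)$ which would give a worse bound.
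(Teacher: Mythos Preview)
Your overall strategy---expand the square, recognise the $\boldsymbol{x}$-sum as an instance of $\Sigma(\boldsymbol{\gamma})$ with $\boldsymbol{\gamma}=\boldsymbol{\alpha}-\boldsymbol{\beta}$, and split according to whether $\boldsymbol{\gamma}$ lands in $I^{(1)}_d$, $I^{(2)}$, or $\mathfrak{M}(X)$---is exactly the paper's, and your treatment of the major-arc piece (iii) is correct.

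The gap is in pieces (i) and (ii), and it is not just bookkeeping. Your claimed measure bound ``total measure $\ll P^{-(\mathcal{D}-\mathcal{D}_d)+\varepsilon}$'' (and analogously $\ll P^{-\mathcal{D}}$ in piece (ii)) is false. The good rational approximations in degrees $j>d$ say $q_j\le Q_j$ and $\|q_j\gamma_{i,j}\|\le Q_jP^{-j}$ with $Q_j=(\log P)^{e_j}L^{-s_j}$, so the measure of this constraint set is $\ll P^{\varepsilon}Q_{d+1}\prod_{j>d}(Q_j/P^j)^{r_j}\sim L^{-s_{d+1}-\sum_{j>d}s_jr_j}P^{-(\mathcal{D}-\mathcal{D}_d)}$. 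Since $L$ can be as small as the halting bound \eqref{L bound in I1d}, the prefactor $L^{-s_{d+1}-\sum s_jr_j}$ is a genuine positive power of $P$; you cannot simply drop it. In particular you cannot simultaneously invoke the weakened pointwise bound $L\ll P^{-\mathcal{D}_d-t_d}$ \emph{and} the measure bound $\ll P^{-(\mathcal{D}-\mathcal{D}_d)}$, because these correspond to incompatible values of $L$.

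The fix (and this is what the paper does) is to couple the two via a dyadic decomposition in $L$: on the shell $\mathcal{A}(L_0)=\{L_0<L\le 2L_0\}\cap I^{(1)}_d$ one has $|\mathcal{G}|\ll P^{k+2t+2l+\varepsilon}L_0$ and $|\mathcal{A}(L_0)|\ll L_0^{-s_{d+1}-\sum_{j>d}s_jr_j}P^{-(\mathcal{D}-\mathcal{D}_d)+\varepsilon}$, so the shell contributes $\ll P^{k+2t+2l-(\mathcal{D}-\mathcal{D}_d)+\varepsilon}L_0^{1-s_{d+1}-\sum_{j>d}s_jr_j}|\mathfrak{n}|$. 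Admissibility makes the net exponent of $L_0$ positive, so the sum over shells is dominated by the largest admissible $L_0$, namely the halting bound \eqref{L bound in I1d}; plugging that in and unwinding \eqref{def td} gives exactly $P^{k+2t+2l-\mathcal{D}-t_d(k,\boldsymbol{g})+\varepsilon}|\mathfrak{n}|$. Piece (ii) is identical with \eqref{L bound in I2} and \eqref{def t0} in place of \eqref{L bound in I1d} and \eqref{def td}.
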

\begin{proof}
The proof is to combine the argument in \S3 and \cite[Lemma 4.1]{LiuZha23}.  
The starting steps are the same as that in the proof of \cite[Lemma 4.1]{LiuZha23}.
    Unfolding the square and exchanging the order of the summation and integrations, we get
    \begin{align*}
        \mathcal{J}
        &=  \sum_{ \boldsymbol{x}}\int_{\mathfrak{n}}\int_{\mathfrak{n}}e((\boldsymbol{\alpha}_1-\boldsymbol{\alpha}_2)\cdot \boldsymbol{g}(\boldsymbol{x}))\mathcal{H}(\boldsymbol{\alpha}_1,\boldsymbol{\alpha}_2,\boldsymbol{x})
        d\boldsymbol{\alpha}_1 d\boldsymbol{\alpha}_2 \\
        &=\int_{\mathfrak{n}}\int_{\mathfrak{n}}\mathcal{G}(\boldsymbol{\alpha}_1,\boldsymbol{\alpha}_2)d\boldsymbol{\alpha}_1 d\boldsymbol{\alpha}_2,
    \end{align*}
    where
    \begin{align*}
        \mathcal{H}(\boldsymbol{\alpha}_1,\boldsymbol{\alpha}_2,\boldsymbol{x})=
        \mathcal{E}(\boldsymbol{\alpha}_1;\boldsymbol{x})\mathcal{E}(-\boldsymbol{\alpha}_2;\boldsymbol{x})
        T(\boldsymbol{\alpha}_1;\boldsymbol{x})T(-\boldsymbol{\alpha}_2;\boldsymbol{x})
    \end{align*}
    and
    \begin{align*}
        \mathcal{G}(\boldsymbol{\alpha}_1,\boldsymbol{\alpha}_2)= \sum_{ \boldsymbol{x}}e((\boldsymbol{\alpha}_1-\boldsymbol{\alpha}_2) \cdot \boldsymbol{g}(\boldsymbol{x}))\mathcal{H}(\boldsymbol{\alpha}_1,\boldsymbol{\alpha}_2,\boldsymbol{x}).
    \end{align*}
    Inserting the definitions of $\mathcal{E}(\boldsymbol{\alpha};\boldsymbol{x})$ and $T(\boldsymbol{\alpha};\boldsymbol{x})$, we obtain
    \begin{align*}
        \mathcal{H}(\boldsymbol{\alpha}_1,\boldsymbol{\alpha}_2,\boldsymbol{x})=
        \sum_{\boldsymbol{w}_1}\sum_{\boldsymbol{w}_2}\sum_{\boldsymbol{u}_1}\sum_{\boldsymbol{u}_2}
        \lambda(\boldsymbol{w}_1)\lambda(\boldsymbol{w}_2)\lambda(\boldsymbol{u}_1)\lambda(\boldsymbol{u}_2)e(p(\boldsymbol{x})),
    \end{align*}
    where the polynomial
    $p(\boldsymbol{x}):=p_{\boldsymbol{\alpha}_1,\boldsymbol{\alpha}_2}(\boldsymbol{x},\boldsymbol{w}_1,\boldsymbol{w}_2,\boldsymbol{u}_1,\boldsymbol{u}_2)=\sum_{i,d} p_{i,d}(\boldsymbol{x})$ 
    is 
    \begin{align*}
        \boldsymbol{\alpha}_1 \cdot \boldsymbol{h}(\boldsymbol{x},\boldsymbol{w}_1)
        -\boldsymbol{\alpha}_2 \cdot \boldsymbol{h}(\boldsymbol{x},\boldsymbol{w}_2)+\boldsymbol{\alpha}_1 \cdot \boldsymbol{H}(\boldsymbol{x},\boldsymbol{u}_1)
        -\boldsymbol{\alpha}_2 \cdot \boldsymbol{H}(\boldsymbol{x},\boldsymbol{u}_2).
    \end{align*}
Exchanging the order of summations gives 
    \begin{align}\label{for bound G trivial}
        \mathcal{G}(\boldsymbol{\alpha}_1,\boldsymbol{\alpha}_2)=\sum_{\boldsymbol{w}_1}\sum_{\boldsymbol{w}_2}\sum_{\boldsymbol{u}_1}\sum_{\boldsymbol{u}_2}
        \lambda(\boldsymbol{w}_1)\lambda(\boldsymbol{w}_2)\lambda(\boldsymbol{u}_1)\lambda(\boldsymbol{u}_2)
        U(\boldsymbol{\alpha}_1,\boldsymbol{\alpha}_2),
    \end{align}
    where 
    $$U(\boldsymbol{\alpha}_1,\boldsymbol{\alpha}_2):=U(\boldsymbol{\alpha}_1,\boldsymbol{\alpha}_2,\boldsymbol{w}_1,\boldsymbol{w}_2,\boldsymbol{u}_1,\boldsymbol{u}_2)=\sum_{\boldsymbol{x}\in \mathcal{B}_k(P)}e((\boldsymbol{\alpha}_1-\boldsymbol{\alpha}_2) \cdot \boldsymbol{g}(\boldsymbol{x})+p(\boldsymbol{x})). 
    $$ 
Note that  $\deg_{\boldsymbol{x}}(p_{i,d}(\boldsymbol{x}))<d$ for all $i,d$. 

For the rest of the proof, we will employ the method in \cite{BroHB} (i.e., the lemmas in \S3 in this paper) instead of that in \cite{Bir61} which was applied in \cite[Lemma 4.1]{LiuZha23}. 
    More precisely, we obtain from \eqref{L bound in I1d} 
    that
    if $\boldsymbol{\alpha}_1-\boldsymbol{\alpha}_2 \in I^{(1)}_d(k, \boldsymbol{g})\subset \mathfrak{m}(X)$ with $d \in \Delta$,
\begin{equation}\label{bound G in I1d case}
        \bigg| \sum_{\boldsymbol{x}\in P\mathfrak{B}}e\big((\boldsymbol{\alpha}_1-\boldsymbol{\alpha}_2) \cdot \boldsymbol{g}(\boldsymbol{x})+p(\boldsymbol{x})\big)\bigg| =P^kL \ll P^kP^{\frac{B_d(k, \boldsymbol{g})-k}{2^{d-1}+(k-B_d(k, \boldsymbol{g}))s_{d+1}(k, \boldsymbol{g})}+\varepsilon},   
\end{equation}
    and from \eqref{L bound in I2}
    that if $\boldsymbol{\alpha}_1-\boldsymbol{\alpha}_2 \in I^{(2)}(k, \boldsymbol{g})\subset \mathfrak{m}(X)$,
    \begin{equation}\label{bound G in I2 case}
        |U(\boldsymbol{\alpha}_1,\boldsymbol{\alpha}_2)| =P^kL  \ll P^kX^{-\frac{1}{4s_{1}(k, \boldsymbol{g})}}.
    \end{equation}
In the following of this proof, we abbreviate the subset $I^{(1)}_d(k, \boldsymbol{g})$ (or $I^{(2)}(k, \boldsymbol{g})$) 
of $\mathfrak{m}(X)$ as $I^{(1)}_d(k)$ (or $I^{(2)}(k)$), respectively.
    
    By \eqref{for bound G trivial}, \eqref{bound G in I1d case} and \eqref{bound G in I2 case}, we obtain
    \begin{equation}\label{G bound cases}
        \mathcal{G}(\boldsymbol{\alpha}_1,\boldsymbol{\alpha}_2) \ll 
        \begin{cases}
            P^{k+2t+2l-\frac{k-B_d(k, \boldsymbol{g})}{2^{d-1}+(k-B_d(k, \boldsymbol{g}))s_{d+1}(k, \boldsymbol{g})}+\varepsilon_0}(\log P)^{2t+2l}, &\mbox{if $\boldsymbol{\alpha}_1-\boldsymbol{\alpha}_2 \in I^{(1)}_d(k)$,}\\
            P^{k+2t+2l}X^{-\frac{1}{4s_{1}(k, \boldsymbol{g})}}(\log P)^{2t+2l}, &\mbox{if $\boldsymbol{\alpha}_1-\boldsymbol{\alpha}_2 \in I^{(2)}(k)$.}
        \end{cases}
    \end{equation}
    For an $R$-dimensional Lebesgue measurable set $\mathcal{M}$, we put 
\begin{align*}
K(\mathcal{M},\boldsymbol{\alpha})=
\begin{cases}
           1, \ \ \textrm{ if } \boldsymbol{\alpha}\in \mathcal{M},\\
           0, \ \ \textrm{ otherwise}, 
\end{cases}
\end{align*}
and define
\begin{align*}
\mathcal{J}(\mathcal{M})=  \int_{\mathfrak{n}}\int_{\mathfrak{n}}\mathcal{G}(\boldsymbol{\alpha}_1,\boldsymbol{\alpha}_2)
K(\mathcal{M},\boldsymbol{\alpha}_1-\boldsymbol{\alpha}_2)d\boldsymbol{\alpha}_1 d\boldsymbol{\alpha}_2.
\end{align*}
We are going to estimate $\mathcal{J}(\mathcal{M})$ with $\mathcal{M}=I^{(1)}_D(k)$, $I^{(1)}_d(k)$ for $d\in \Delta$ but $d<D$, 
$I^{(2)}(k)$ and $\mathfrak{M}(X)$ respectively. 

It follows from \eqref{G bound cases} that
\begin{equation}\label{J(I1D) bound}
        \mathcal{J}(I^{(1)}_D(k))\ll P^{k+2t+2l-\frac{k-B_D(k, \boldsymbol{g})}{2^{D-1}}+\varepsilon_0}(\log P)^{2t+2l}|\mathfrak{n}|.
\end{equation}
For other $d\in \Delta$, define  
\begin{equation*}
        \mathcal{A}(L_0;I_d^{(1)}(k)):=\{\boldsymbol{\alpha} \in I_d^{(1)}(k): L_0<L\le 2L_0\}, 
\end{equation*}
and define $\mathcal{A}(L_0;I^{(2)}(k))$ similarly. It follows from Lemma \ref{lemma alternative} that 
    \begin{align*}
       |\mathcal{A}(L_0;I_d^{(1)}(k))|
        \ll \sum_{q_{d+1}}\cdots \sum_{q_D}\prod_{j=d+1}^D\bigg(\frac{Q_j(k, \boldsymbol{g})}{P^j}\bigg)^{r_j}. 
    \end{align*}
In the rest of the proof, we abbreviate $Q_j(k, \boldsymbol{g})$ as $Q_j$. 
    Recalling our conventions for $q_j$ and $Q_j$, we see that $q_{d+1}$ determines $O(\tau(q_{d+1}))=O(P^{\varepsilon_0})$ possibilities for $q_{d+2},\ldots,q_D$, where $\tau(\cdot)$ is the divisor function.
We may assume that $\varepsilon_0$ is uniform for all degrees $d<D$, and we conclude that
    \begin{align}\label{meas of AI1d}
        |\mathcal{A}(L_0;I_d^{(1)}(k))|\ll P^{\varepsilon_0}Q_{d+1}\prod_{j=d+1}^D\bigg(\frac{Q_j}{P^j}\bigg)^{r_j}. 
    \end{align}
    Similarly 
    \begin{align}\label{meas of AI2}
        |\mathcal{A}(L_0;I^{(2)}(k))|\ll P^{\varepsilon_0}Q_{1}\prod_{j=1}^D\bigg(\frac{Q_j}{P^j}\bigg)^{r_j}. 
    \end{align}
    Since $k$ is admissible for $\boldsymbol{g}$, we have
    $$ 1-s_{d+1}(k, \boldsymbol{g})-\sum_{j=d+1}^D s_j(k, \boldsymbol{g})r_j> \mathcal{D}_d\bigg(\frac{2^{d-1}}{k-B_d(k, \boldsymbol{g})} +s_{d+1}(k, \boldsymbol{g})\bigg)>0 \quad (d \in \Delta)$$
    and
    $$ t_0(k, \boldsymbol{g})=1-s_1(k, \boldsymbol{g})-\sum_{j=1}^Ds_j(k, \boldsymbol{g})r_j>0 .$$
Hence, by the conventions of $Q_j$ for $j \in \Delta$, \eqref{L bound in I1d}, \eqref{meas of AI1d} and calculations which are similar to \eqref{G bound cases}, we obtain
    \begin{equation*}
        \begin{split}
        \mathcal{J}(\mathcal{A}(L_0;I_d^{(1)}(k)))
        &\ll |\mathcal{A}(L_0;I_d^{(1)}(k))|P^{k+2t+2l}L_0(\log P)^{2t+2l}|\mathfrak{n}| \\
        &\ll P^{k+2t+2l+\varepsilon_0}Q_{d+1}\prod_{j=d+1}^D\bigg(\frac{Q_j}{P^j}\bigg)^{r_j}L_0|\mathfrak{n}| \\
        &\ll P^{k+2t+2l-\mathcal{D}+\mathcal{D}_d+\varepsilon_0  }L_0^{1-(s_{d+1}(k, \boldsymbol{g})+s_{d+1}(k, \boldsymbol{g})r_{d+1}+\cdots+s_{D}(k, \boldsymbol{g})r_{D})}|\mathfrak{n}| \\
        &\ll P^{k+2t+2l-\mathcal{D}+\mathcal{D}_d+\varepsilon_0 -\frac{1-(s_{d+1}(k, \boldsymbol{g})+s_{d+1}(k, \boldsymbol{g})r_{d+1}+\cdots+s_{D}(k, \boldsymbol{g})r_{D})}{2^{d-1}/(k-B_d(k, \boldsymbol{g}))+s_{d+1}(k, \boldsymbol{g})}}|\mathfrak{n}|
        \end{split}
    \end{equation*}
for every degree $d<D$. By dyadic argument and \eqref{def td},
\begin{equation}\label{J(I1d) bound}
\begin{split}
            \mathcal{J}(I_d^{(1)}(k))
            &\ll (\log P) \cdot\mathcal{J}(\mathcal{A}(L_0;I_d^{(1)}(k))) \\
            &= P^{k+2t+2l-\mathcal{D}+2\varepsilon_0-t_d(k, \boldsymbol{g})}|\mathfrak{n}|.
\end{split}
\end{equation}
After similar calculations for $\mathcal{J}(I^{(2)}(k))$, by \eqref{meas of AI2} and the conventions of $Q_j$, we get
    \begin{equation*}
        \begin{split}
            \mathcal{J}(I^{(2)}(k))
            &\ll (\log P) \cdot\mathcal{J}(\mathcal{A}(L_0;I^{(2)}(k))) \\
            &\ll |\mathcal{A}(L_0;I^{(2)}(k))|P^{k+2t+2l}L_0(\log P)^{2t+2l+1}|\mathfrak{n}|\\
            &\ll P^{k+2t+2l}L_0P^{\varepsilon_0}Q_{1}\prod_{j=1}^D\bigg(\frac{Q_j}{P^j}\bigg)^{r_j}(\log P)^{2t+2l+1}|\mathfrak{n}|\\
            &\ll P^{k+2t+2l+\varepsilon_0-\mathcal{D} }L_0^{1-(s_1(k, \boldsymbol{g})+s_1(k, \boldsymbol{g})r_1+\cdots+s_D(k, \boldsymbol{g})r_D)}|\mathfrak{n}|,
        \end{split}
    \end{equation*}
    which is, by \eqref{L bound in I2} and \eqref{def t0},
    \begin{equation}\label{J(I2) bound}
        \begin{split}
           \mathcal{J}(I^{(2)}(k)) 
           &\ll P^{k+2t+2l-\mathcal{D}+2\varepsilon_0 }X^{-\frac{1-s_1(k, \boldsymbol{g})-\sum_{j=1}^Ds_j(k, \boldsymbol{g})r_j}{4s_{1}(k, \boldsymbol{g})}} |\mathfrak{n}| \\ 
           &= P^{k+2t+2l-\mathcal{D}+2\varepsilon_0 }X^{-\frac{t_0(k, \boldsymbol{g})}{4s_{1}(k, \boldsymbol{g})}} |\mathfrak{n}|.
        \end{split}
    \end{equation}
Now, by \eqref{J(I1D) bound}, \eqref{J(I1d) bound} and \eqref{J(I2) bound}, we deduce that
    \begin{equation}\label{J(m) bound}
        \begin{split}
            \mathcal{J}(\mathfrak{m}(X))
            &\ll \sum_{d \in \Delta}|\mathcal{J}(I_d^{(1)}(k))|+|\mathcal{J}(I^{(2)}(k))|  \\
            &\ll P^{k+2t+2l-\mathcal{D}+2\varepsilon_0-\min_{d \in \Delta}{t_d(k, \boldsymbol{g})}}|\mathfrak{n}|+P^{k+2t+2l+2\varepsilon_0-\mathcal{D} }X^{-\frac{t_0(k, \boldsymbol{g})}{4s_{1}(k, \boldsymbol{g})}}|\mathfrak{n}|.
        \end{split}
    \end{equation}

We are left with $\mathcal{J}(\mathfrak{M}(X))$.
    Estimating elementarily,
    \begin{align*}
        \mathcal{J}(\mathfrak{M}(X)) 
        &\ll  |\mathfrak{M}(X)|\sum_{\boldsymbol{x}\in \mathcal{B}_k(P)}\int_{\mathfrak{n}}|\mathcal{E}(\boldsymbol{\alpha};\boldsymbol{x})T(\boldsymbol{\alpha};\boldsymbol{x})|^2d\boldsymbol{\alpha} \\
        &\ll \sum_{q\le X}\sum_{\substack{1\le \boldsymbol{a}\le q \\ (a_1,\ldots,a_R,q)=1}}\prod_{d \in \Delta}\bigg( \frac{X}{qP^d}\bigg)^{r_d}\sum_{\boldsymbol{x}\in P\mathfrak{B}}\int_{\mathfrak{n}}|\mathcal{E}(\boldsymbol{\alpha};\boldsymbol{x})T(\boldsymbol{\alpha};\boldsymbol{x})|^2d\boldsymbol{\alpha} \\
        &\ll X^{R+1}P^{-\mathcal{D} }\sum_{\boldsymbol{x}\in \mathcal{B}_k(P)}\int_{\mathfrak{n}}|\mathcal{E}(\boldsymbol{\alpha};\boldsymbol{x})T(\boldsymbol{\alpha};\boldsymbol{x})|^2d\boldsymbol{\alpha}.
    \end{align*}
Combining this with \eqref{J(m) bound} completes the proof.
\end{proof}

\section{A cruicial proposition}
The purpose of this section is to establish Proposition \ref{prop}, which is crucial not only to bound the contribution from the 
minor arcs, but also to handle the integral on the enlarged major arcs. See Lemma \ref{lem/bound/gauss} for an application of Proposition \ref{prop} 
on the enlarged major arcs. 

Let $\boldsymbol{F}=(F_1,\ldots,F_R)=(F_{i,d})_{\substack{d \in \Delta \\ 1\leqslant i \leqslant r_d}}$ 
with $F_{i,d}=F_{i,d}(x_1,\ldots,x_n)$ being forms of degree $d$ in $n$ variables for all $i, d$.
Write
$$\boldsymbol{x}=(\boldsymbol{y},\boldsymbol{z},\boldsymbol{w}),$$
where $\boldsymbol{y}\in \N^m, \boldsymbol{z}\in \N^{s}, \boldsymbol{w}\in \N^t$ and
$m+s+t=n.$ 
Then each $F_{i,d}$ can be decomposed as 
\begin{align}\label{decom F into fgh 1}
    F_{i,d}(\boldsymbol{y},\boldsymbol{z},\boldsymbol{w})=f_{i,d}(\boldsymbol{y})+g_{i,d}(\boldsymbol{y},\boldsymbol{z})
    +h_{i,d}(\boldsymbol{y},\boldsymbol{z},\boldsymbol{w}),
\end{align}
where
\begin{align*}
    \deg_{\boldsymbol{y}}(g_{i,d})<d \ , \ \deg_{(\boldsymbol{y},\boldsymbol{z})}(h_{i,d})<d, 
\end{align*}
and we remark that both $m$ and $s$ could be $0$. 
Put
\begin{align*}
    \boldsymbol{f}=(f_1,\ldots,f_R)=(f_{i,d})_{\substack{d\in \Delta \\ 1\le i\le r_d}}
   \end{align*}
and set $\boldsymbol{g}$ and $\boldsymbol{h}$ similarly.  
Then the  exponential sum defined in \eqref{def/SFa} can be written as  
\begin{align}\label{define SF with decom for F}
    S_{\boldsymbol{F}}(\boldsymbol{\alpha})=\sum_{\boldsymbol{y} \in \mathcal{B}_m(P)}\sum_{\boldsymbol{z} \in \mathcal{B}_s(P)}
    \sum_{\boldsymbol{w} \in \mathcal{B}_t(P)}\Lambda(\boldsymbol{y})
    \Lambda(\boldsymbol{z})\Lambda(\boldsymbol{w})e(\boldsymbol{\alpha}\cdot \boldsymbol{F}(\boldsymbol{y},\boldsymbol{z},\boldsymbol{w})). 
\end{align}
Also define 
\begin{align}\label{write R secttrans}
    \mathcal{E}_{\boldsymbol{y},\boldsymbol{z}}(\boldsymbol{\alpha}):=
    \sum_{\boldsymbol{w} \in \mathcal{B}_t(P)}\Lambda(\boldsymbol{w})e(\boldsymbol{\alpha}\cdot \boldsymbol{h}(\boldsymbol{y},\boldsymbol{z},\boldsymbol{w})).
\end{align}

\begin{proposition}\label{prop}
    Let $S_{\boldsymbol{F}}(\boldsymbol{\alpha})$ be as in \eqref{define SF with decom for F} with $\boldsymbol{F}=(F_1,\ldots,F_R)$ being decomposed as in \eqref{decom F into fgh 1}.
    Suppose $m$ is admissible for $\boldsymbol{f}$ and $m+s$ is admissible for $\boldsymbol{g}$. 
    Let $X\le Q$.
    Then we have
\begin{align*}
\int_{\mathfrak{n}}S_{\boldsymbol{F}}(\boldsymbol{\alpha})d\boldsymbol{\alpha} 
\ll
        & \, P^{n-\frac{1}{2}\mathcal{D} +\varepsilon-\frac{1}{2}\min_{d \in \Delta}{t_d(m, \boldsymbol{f})} } |\mathfrak{n}|^{\frac{1}{2}}  + P^{n-\frac{1}{2}\mathcal{D}+\varepsilon}X^{-\frac{t_0(m, \boldsymbol{f})}{8s_{1}(m, \boldsymbol{f})}} |\mathfrak{n}|^{\frac{1}{2}} \\
        & \ + P^{n-\frac{3}{4}\mathcal{D} +\varepsilon-\frac{1}{4}\min_{d \in \Delta}{t_d(m+s, \boldsymbol{g})} } X^{\frac{1}{2}R+\frac{1}{2}}|\mathfrak{n}|^{\frac{1}{4}} \\
        & \ + P^{n-\frac{3}{4}\mathcal{D}+\varepsilon}X^{\frac{1}{2}R+\frac{1}{2}-\frac{t_0(m+s, \boldsymbol{g})}{16s_{1}(m+s, \boldsymbol{g})}} |\mathfrak{n}|^{\frac{1}{4}} \\
        & \  +  P^{m+s-\mathcal{D}+\varepsilon }X^{R+1} \sup(\mathcal{E}),
    \end{align*}
    where 
    \begin{align*}
        \sup(\mathcal{E})=\sup_{\boldsymbol{\alpha}\in \mathfrak{n}}\sup_{\boldsymbol{y}}\sup_{\boldsymbol{z}}|\mathcal{E}_{\boldsymbol{y},\boldsymbol{z}}(\boldsymbol{\alpha})|.
    \end{align*}
\end{proposition}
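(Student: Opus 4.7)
The plan is to combine two rounds of Cauchy-Schwarz with two invocations of Lemma \ref{lemma mean J} and a short fixed-point argument. First, factor
\[ S_{\boldsymbol{F}}(\boldsymbol{\alpha}) = \sum_{\boldsymbol{y}\in\mathcal{B}_m(P)} \Lambda(\boldsymbol{y})\, e(\boldsymbol{\alpha}\cdot\boldsymbol{f}(\boldsymbol{y}))\, T_1(\boldsymbol{\alpha};\boldsymbol{y}), \qquad T_1(\boldsymbol{\alpha};\boldsymbol{y}) := \sum_{\boldsymbol{z}} \Lambda(\boldsymbol{z})\, e(\boldsymbol{\alpha}\cdot\boldsymbol{g}(\boldsymbol{y},\boldsymbol{z}))\, \mathcal{E}_{\boldsymbol{y},\boldsymbol{z}}(\boldsymbol{\alpha}), \]
and apply Cauchy-Schwarz over $\boldsymbol{y}$ to get $\bigl|\int_{\mathfrak{n}} S_{\boldsymbol{F}}\, d\boldsymbol{\alpha}\bigr|^2 \ll P^{m+\varepsilon}\, \mathcal{J}_1$, where $\mathcal{J}_1 := \sum_{\boldsymbol{y}} \bigl|\int_{\mathfrak{n}} e(\boldsymbol{\alpha}\cdot\boldsymbol{f}(\boldsymbol{y}))\, T_1\, d\boldsymbol{\alpha}\bigr|^2$. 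Since the total aux polynomial $\boldsymbol{g}(\boldsymbol{y},\boldsymbol{z})+\boldsymbol{h}(\boldsymbol{y},\boldsymbol{z},\boldsymbol{w})$ has degree $<d$ in $\boldsymbol{y}$, Lemma \ref{lemma mean J} applies to $\mathcal{J}_1$ with $k=m$, ambient forms $\boldsymbol{f}$, and the auxiliary sum $T_1$ of dimension $s+t$ (taking the other aux slot trivially). The two power-saving terms, multiplied by $P^{m+\varepsilon}$ and square-rooted, produce the first two terms of the proposition; the third term of the Lemma leaves a residual $X^{R+1}P^{-\mathcal{D}}N$, where $N := \sum_{\boldsymbol{y}} \int_{\mathfrak{n}} |T_1|^2\, d\boldsymbol{\alpha}$.

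To bound $N$, I would write $N = \sum_{\boldsymbol{y},\boldsymbol{z}} \Lambda(\boldsymbol{z})\, J_2(\boldsymbol{y},\boldsymbol{z})$ with $J_2(\boldsymbol{y},\boldsymbol{z}) := \int_{\mathfrak{n}} e(\boldsymbol{\alpha}\cdot\boldsymbol{g}(\boldsymbol{y},\boldsymbol{z}))\, \mathcal{E}_{\boldsymbol{y},\boldsymbol{z}}(\boldsymbol{\alpha})\, \overline{T_1(\boldsymbol{\alpha};\boldsymbol{y})}\, d\boldsymbol{\alpha}$, and Cauchy-Schwarz on $(\boldsymbol{y},\boldsymbol{z})$ yields $N^2 \ll P^{m+s+\varepsilon}\sum_{\boldsymbol{y},\boldsymbol{z}} |J_2|^2$. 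This inner mean value is again covered by Lemma \ref{lemma mean J}, now with $k=m+s$, ambient forms $\boldsymbol{g}$ (which are genuinely homogeneous of degree $d$ in $(\boldsymbol{y},\boldsymbol{z})$ because $F_{i,d}(\boldsymbol{y},\boldsymbol{z},\boldsymbol{0})=f_{i,d}(\boldsymbol{y})+g_{i,d}(\boldsymbol{y},\boldsymbol{z})$), and auxiliary product $\mathcal{E}_{\boldsymbol{y},\boldsymbol{z}}\cdot\overline{T_1}$ whose underlying polynomials $\boldsymbol{h}(\boldsymbol{y},\boldsymbol{z},\boldsymbol{w})$ and $\boldsymbol{g}(\boldsymbol{y},\boldsymbol{z}_1)+\boldsymbol{h}(\boldsymbol{y},\boldsymbol{z}_1,\boldsymbol{w}_1)$ both have degree $<d$ in $(\boldsymbol{y},\boldsymbol{z})$. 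For the residual term in this second application, observing that $T_1$ is independent of the outer $\boldsymbol{z}$-coordinate, one has $\sum_{\boldsymbol{y},\boldsymbol{z}} \int |\mathcal{E}_{\boldsymbol{y},\boldsymbol{z}}\,\overline{T_1}|^2\, d\boldsymbol{\alpha} \ll (\sup\mathcal{E})^2\, P^s\, N$.

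Assembling, $N$ obeys a quadratic inequality $N^2 \ll C + B\,N$, with $C = P^{m+s+\varepsilon}(A_1+A_2)$ for $A_1, A_2$ the two Lemma \ref{lemma mean J} power-saving contributions at $(m+s,\boldsymbol{g})$, and $B = P^{m+2s-\mathcal{D}+\varepsilon}\, X^{R+1}\, (\sup\mathcal{E})^2$. Solving gives $N \ll \sqrt{C} + B$, i.e., three sub-bounds. Substituting each into $\bigl|\int_{\mathfrak{n}} S_{\boldsymbol{F}}\bigr|^2 \ll P^{m+\varepsilon}\, X^{R+1}\, P^{-\mathcal{D}}\, N$ and extracting a square root produces, respectively, the third term of the proposition (from the $t_d(m+s,\boldsymbol{g})$ saving, where two halvings convert the $\mathcal{D}/2$ of a single Cauchy-Schwarz into the $3\mathcal{D}/4$ of the statement and $|\mathfrak{n}|$ into $|\mathfrak{n}|^{1/4}$), the fourth term (from the $X^{-t_0(m+s,\boldsymbol{g})/(4s_1)}$ saving, which becomes the stated $X^{-t_0/(16 s_1)}$ after two halvings), and the fifth term (from the fixed-point branch $N\ll B$, in which the $|\mathfrak{n}|$ factor disappears).

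The main obstacle is to verify the degree hypotheses at the second invocation of Lemma \ref{lemma mean J}: one must recognise $\boldsymbol{g}(\boldsymbol{y},\boldsymbol{z})$ as a bona fide system of degree-$d$ forms in $(\boldsymbol{y},\boldsymbol{z})$ (which relies on homogeneity of $\boldsymbol{F}$) and check that the polynomial $\boldsymbol{g}(\boldsymbol{y},\boldsymbol{z}_1)$ occurring inside $\overline{T_1}$, being independent of the outer $\boldsymbol{z}$-variable, has the required degree $<d$ in $(\boldsymbol{y},\boldsymbol{z})$. Once these checks are in place, the admissibility of $m$ for $\boldsymbol{f}$ and of $m+s$ for $\boldsymbol{g}$ feeds straight into the two applications of the Lemma, and the remainder of the argument is routine bookkeeping of exponents and the quadratic resolution of $N$.
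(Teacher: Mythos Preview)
Your proposal is correct and follows essentially the same approach as the paper's own proof: two rounds of Cauchy--Schwarz combined with two applications of Lemma~\ref{lemma mean J} (first at $k=m$ with forms $\boldsymbol{f}$, then at $k=m+s$ with forms $\boldsymbol{g}$), and the quadratic/fixed-point resolution of the residual quantity $N$ (the paper's $\mathcal{T}_{\mathfrak{n}}$). Your remark on verifying that $\boldsymbol{g}$ is genuinely a degree-$d$ form in $(\boldsymbol{y},\boldsymbol{z})$ and that the auxiliary polynomials have the correct degree drop is a point the paper passes over silently, but it is indeed needed and holds for the reasons you give.
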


\begin{proof}
The proof of this proposition is similar to that of \cite[Proposition 5.1]{LiuZha23} but more complicated, 
and so we write in full details. Let
    \begin{align}\label{define T in sect5}
        T(\boldsymbol{\alpha};\boldsymbol{y})=
        \sum_{\boldsymbol{z}}\sum_{\boldsymbol{w}}\Lambda(\boldsymbol{z})\Lambda(\boldsymbol{w})e(\boldsymbol{\alpha}\cdot \boldsymbol{G}(\boldsymbol{y},\boldsymbol{z},\boldsymbol{w})),
    \end{align}
    where
    \begin{align}\label{define G in sect5}
        \boldsymbol{G}(\boldsymbol{y},\boldsymbol{z},\boldsymbol{w})=
        \boldsymbol{g}(\boldsymbol{y},\boldsymbol{z})+\boldsymbol{h}(\boldsymbol{y},\boldsymbol{z},\boldsymbol{w}).
    \end{align}
    By \eqref{decom F into fgh 1} and \eqref{define SF with decom for F}, we have
    \begin{align*}
        S_{\boldsymbol{F}}(\boldsymbol{\alpha})=\sum_{\boldsymbol{y}}\Lambda(\boldsymbol{y})
        e(\boldsymbol{\alpha} \cdot \boldsymbol{f}(\boldsymbol{y}))T(\boldsymbol{\alpha};\boldsymbol{y}), 
    \end{align*} 
    and therefore
    \begin{align*}
        \int_{\mathfrak{n}}S_{\boldsymbol{F}}(\boldsymbol{\alpha})d\boldsymbol{\alpha}
        &=\int_{\mathfrak{n}}\sum_{\boldsymbol{y}}\Lambda(\boldsymbol{y})
        e(\boldsymbol{\alpha} \cdot \boldsymbol{f}(\boldsymbol{y}))T(\boldsymbol{\alpha};\boldsymbol{y})d\boldsymbol{\alpha} \\
        &=\sum_{\boldsymbol{y}}\Lambda(\boldsymbol{y})\int_{\mathfrak{n}}
        e(\boldsymbol{\alpha} \cdot \boldsymbol{f}(\boldsymbol{y}))T(\boldsymbol{\alpha};\boldsymbol{y})d\boldsymbol{\alpha}.
    \end{align*}
    Cauchy's inequality now gives 
    \begin{align}\label{S square to I}
        \bigg|\int_{\mathfrak{n}}S_{\boldsymbol{F}}(\boldsymbol{\alpha})d\boldsymbol{\alpha}\bigg|^2 
        \ll P^m(\log P)^m \mathcal{I}_{\mathfrak{n}},
    \end{align}
where
\begin{align*}
        \mathcal{I}_{\mathfrak{n}}
        =\sum_{\boldsymbol{y}}
        \bigg|\int_{\mathfrak{n}}
        e(\boldsymbol{\alpha}\cdot \boldsymbol{f}(\boldsymbol{y}))
        T(\boldsymbol{\alpha};\boldsymbol{y})d\boldsymbol{\alpha}\bigg|^2.
\end{align*} 
By Lemma \ref{lemma mean J} with $k=m$ and $t=0$,  
\begin{equation}\label{I to T}
\begin{split}
        \mathcal{I}_{\mathfrak{n}}\ll 
        & \, P^{m+2l-\mathcal{D}+\varepsilon_1 -\min_{d \in \Delta}{t_d(m, \boldsymbol{f})} } | \mathfrak{n}|  + P^{m+2l-\mathcal{D}+\varepsilon_1}X^{-\frac{t_0(m, \boldsymbol{f})}{4s_{1}(m, \boldsymbol{f})}} | \mathfrak{n}| \\
        & \ + X^{R+1}P^{-\mathcal{D} }\mathcal{T}_{\mathfrak{n}},
\end{split}
\end{equation}
    where $l=s+t=s$ 
    and
    \begin{align}\label{define Tn}
        \mathcal{T}_{\mathfrak{n}} =\sum_{\boldsymbol{y}}\int_{\mathfrak{n}} 
        |T(\boldsymbol{\alpha};\boldsymbol{y})|^2 d\boldsymbol{\alpha}.
    \end{align}

Now we estimate $\mathcal{T}_{\mathfrak{n}}$. By \eqref{write R secttrans}, \eqref{define T in sect5} and \eqref{define G in sect5}, we have
    \begin{align*}
        T(\boldsymbol{\alpha};\boldsymbol{y})=\sum_{\boldsymbol{z}}\Lambda(\boldsymbol{z})e(\boldsymbol{\alpha} \cdot \boldsymbol{g}(\boldsymbol{y},\boldsymbol{z}))
        \mathcal{E}_{\boldsymbol{y},\boldsymbol{z}}(\boldsymbol{\alpha}), 
    \end{align*}
and therefore 
\begin{align*}
        \int_{\mathfrak{n}}|T(\boldsymbol{\alpha};\boldsymbol{y})|^2d\boldsymbol{\alpha}
        =&\int_{\mathfrak{n}}T(\boldsymbol{\alpha};\boldsymbol{y})T(-\boldsymbol{\alpha};\boldsymbol{y})d\boldsymbol{\alpha}\\ 
        = & \sum_{\boldsymbol{z}}\Lambda(\boldsymbol{z})\int_{\mathfrak{n}}e(\boldsymbol{\alpha} \cdot \boldsymbol{g}(\boldsymbol{y},\boldsymbol{z}))
        \mathcal{E}_{\boldsymbol{y},\boldsymbol{z}}(\boldsymbol{\alpha})
        T(-\boldsymbol{\alpha};\boldsymbol{y})d\boldsymbol{\alpha}.
\end{align*}
Another application of Cauchy's inequality then yields 
\begin{align}\label{T square to J pre}
        \bigg(\int_{\mathfrak{n}} |T(\boldsymbol{\alpha};\boldsymbol{y})|^2d\boldsymbol{\alpha}\bigg)^2 
        \ll P^s(\log P)^s\mathcal{J}_{\mathfrak{n},\boldsymbol{y}},
\end{align}
where
\begin{align*}
        \mathcal{J}_{\mathfrak{n},\boldsymbol{y}}=\sum_{\boldsymbol{z}}
        \bigg|\int_{\mathfrak{n}}e(\boldsymbol{\alpha}\cdot \boldsymbol{g}(\boldsymbol{y},\boldsymbol{z}))
        \mathcal{E}_{\boldsymbol{y},\boldsymbol{z}}(\boldsymbol{\alpha})
        T(-\boldsymbol{\alpha};\boldsymbol{y})d\boldsymbol{\alpha}\bigg|^2.
\end{align*}
Hence \eqref{define Tn}, \eqref{T square to J pre} and Cauchy's inequality give 
    \begin{align}\label{T square to J}
        \mathcal{T}_{\mathfrak{n}}^2\ll P^{m+s}(\log P)^s\mathcal{J}_{\mathfrak{n}},
    \end{align}
    where
    \begin{align*}
        \mathcal{J}_{\mathfrak{n}}=\sum_{\boldsymbol{y}}\mathcal{J}_{\mathfrak{n},\boldsymbol{y}}.
    \end{align*}
    On applying Lemma \ref{lemma mean J} with $k=m+s$, $l=s+t$, we get
    \begin{equation}\label{J to K}
        \begin{split}
        \mathcal{J}_{\mathfrak{n}}\ll & \, P^{n+t+2l-\mathcal{D}+\varepsilon_1-\min_{d \in \Delta}{t_d(m+s, \boldsymbol{g})} } | \mathfrak{n}|  + P^{n+t+2l-\mathcal{D}+\varepsilon_1}X^{-\frac{t_0(m+s, \boldsymbol{g})}{4s_{1}(m+s, \boldsymbol{g})}} | \mathfrak{n}| \\
        & \ + X^{R+1}P^{-\mathcal{D} }\mathcal{K}_{\mathfrak{n}},
        \end{split}
    \end{equation}
    where 
    \begin{align*}
        \mathcal{K}_{\mathfrak{n}}=\sum_{\boldsymbol{y}}\sum_{\boldsymbol{z}}\int_{\mathfrak{n}}|T(\boldsymbol{\alpha};\boldsymbol{y})|^2
        |\mathcal{E}_{\boldsymbol{y},\boldsymbol{z}}(\boldsymbol{\alpha})|^2d\boldsymbol{\alpha}.
    \end{align*}
    Recalling the definition of $\sup(\mathcal{E})$ and estimating elementarily, we have
    \begin{align*}
        \sum_{\boldsymbol{z}}|\mathcal{E}_{\boldsymbol{y},\boldsymbol{z}}(\boldsymbol{\alpha})|^2\le P^{s}\sup(\mathcal{E})^2,
    \end{align*}
and hence 
\begin{align}\label{K bound}
        \mathcal{K}_{\mathfrak{n}}\le \mathcal{T}_{\mathfrak{n}}\, P^{s}\sup(\mathcal{E})^2.
\end{align}
Inserting \eqref{K bound} into \eqref{J to K} gives 
\begin{align*}
        \mathcal{J}_{\mathfrak{n}}\ll 
        & \, P^{n+t+2l-\mathcal{D} +\varepsilon_1-\min_{d \in \Delta}{t_d(m+s, \boldsymbol{g})} } | \mathfrak{n}|  + P^{n+t+2l-\mathcal{D}+\varepsilon_1}X^{-\frac{t_0(m+s, \boldsymbol{g})}{4s_{1}(m+s, \boldsymbol{g})}} | \mathfrak{n}| \\
        & \  + \mathcal{T}_{\mathfrak{n}}\, P^{s-\mathcal{D} }X^{R+1} \sup(\mathcal{E})^2, 
\end{align*}
which in combination with \eqref{T square to J} gives  
\begin{align*}
        \mathcal{T}_{\mathfrak{n}} ^2\ll 
        & \, P^{2n+2l-\mathcal{D}+\varepsilon_1 -\min_{d \in \Delta}{t_d(m+s, \boldsymbol{g})} } | \mathfrak{n}|  + P^{2n+2l-\mathcal{D}+\varepsilon_1}X^{-\frac{t_0(m+s, \boldsymbol{g})}{4s_{1}(m+s, \boldsymbol{g})}} | \mathfrak{n}| \\
        & \  + \mathcal{T}_{\mathfrak{n}}\, P^{m+2s+\varepsilon_1-\mathcal{D} }X^{R+1} \sup(\mathcal{E})^2.
    \end{align*}
It follows that 
\begin{equation}\label{T bound}
\begin{split}
        \mathcal{T}_{\mathfrak{n}} \ll 
        & \, P^{n+l-\frac{1}{2}\mathcal{D}+\frac{1}{2}\varepsilon_1 -\frac{1}{2}\min_{d \in \Delta}{t_d(m+s, \boldsymbol{g})} } | \mathfrak{n}|^{\frac{1}{2}}  + P^{n+l-\frac{1}{2}\mathcal{D}+\frac{1}{2}\varepsilon_1}X^{-\frac{t_0(m+s, \boldsymbol{g})}{8s_{1}(m+s, \boldsymbol{g})}} | \mathfrak{n}|^{\frac{1}{2}} \\
        & \  +  P^{m+2s+\varepsilon_1-\mathcal{D} }X^{R+1} \sup(\mathcal{E})^2.
\end{split}
\end{equation}

Combining \eqref{T bound} and \eqref{I to T}, we deduce 
    \begin{align*}
        \mathcal{I}_{\mathfrak{n}}\ll 
        & \, P^{m+2l-\mathcal{D}+\varepsilon_1 -\min_{d \in \Delta}{t_d(m, \boldsymbol{f})} } | \mathfrak{n}|  + P^{m+2l-\mathcal{D}+\varepsilon_1}X^{-\frac{t_0(m, \boldsymbol{f})}{4s_{1}(m, \boldsymbol{f})}} | \mathfrak{n}| \\
        & \ + P^{n+l-\frac{3}{2}\mathcal{D} +\frac{1}{2}\varepsilon_1-\frac{1}{2}\min_{d \in \Delta}{t_d(m+s, \boldsymbol{g})} } X^{R+1}| \mathfrak{n}|^{\frac{1}{2}}  + P^{n+l-\frac{3}{2}\mathcal{D}+\frac{1}{2}\varepsilon_1}X^{R+1-\frac{t_0(m+s, \boldsymbol{g})}{8s_{1}(m+s, \boldsymbol{g})}} | \mathfrak{n}|^{\frac{1}{2}} \\
        & \  +  P^{m+2s+\varepsilon_1-2\mathcal{D} } X^{2R+2}\sup(\mathcal{E})^2.
    \end{align*}
Inserting this into \eqref{S square to I} and noting that $n=m+s+t=m+l$, we conclude that 
\begin{align*}
        \bigg|\int_{\mathfrak{n}}S_{\boldsymbol{F}}(\boldsymbol{\alpha})d\boldsymbol{\alpha}\bigg|^2 \ll
        & \, P^{2n-\mathcal{D} +\varepsilon -\min_{d \in \Delta}{t_d(m, \boldsymbol{f})} }| \mathfrak{n}|  + P^{2n-\mathcal{D}+\varepsilon}X^{-\frac{t_0(m, \boldsymbol{f})}{4s_{1}(m, \boldsymbol{f})}} | \mathfrak{n}| \\
        & \ + P^{2n-\frac{3}{2}\mathcal{D} +\varepsilon-\frac{1}{2}\min_{d \in \Delta}{t_d(m+s, \boldsymbol{g})} } X^{R+1}| \mathfrak{n}|^{\frac{1}{2}}  \\
        & \ + P^{2n-\frac{3}{2}\mathcal{D}+\varepsilon}X^{R+1-\frac{t_0(m+s, \boldsymbol{g})}{8s_{1}(m+s, \boldsymbol{g})}} | \mathfrak{n}|^{\frac{1}{2}} \\
        & \  +  P^{2m+2s-2\mathcal{D}+\varepsilon }X^{2R+2} \sup(\mathcal{E})^2.
\end{align*}
This proves the proposition. 
\end{proof}

\begin{remark}\label{prop rem}
Proposition \ref{prop} is still true if in \eqref{define SF with decom for F} 
we erase the three weights  
$\Lambda(\boldsymbol{y})$, 
$\Lambda(\boldsymbol{z})$ and 
$\Lambda(\boldsymbol{w})$. 
This is clear from the proof. 
\end{remark}

One sees from Proposition \ref{prop} that, if one has a nontrivial bound
$$
\sup(\mathcal{E}) \ll P^t Q^{-\omega_{\Delta,R}} 
$$
for some $\omega_{\Delta,R}>0$, then it is possible to derive a nice upper bound for 
$\int_{\mathfrak{n}}S_{\boldsymbol{F}}(\boldsymbol{\alpha})d \boldsymbol{\alpha}$ 
provided that all $t_d(m, \boldsymbol{f})$ and $t_d(m+s, \boldsymbol{g})$ 
are large enough, and $Q$ is chosen appropriately.

\section{Sums over primes}  
In this section we quote a lemma from \cite{LiuZha23} on exponential sums over primes. 
 
For $(H_1,\ldots,H_R)=(H_{i,d})_{\substack{d \in \Delta \\1\leqslant i \le r_d}}\in \Z[x_1,\ldots,x_n]$, we set
$$\rank(\boldsymbol{H})=\rank(H_1,\ldots,H_R)$$
by viewing $H_1,\ldots,H_R$ as vectors in the linear space $\R[x_1,\ldots,x_n]$ over $\R$, 
i.e., the dimension of the linear subspace of $\R[x_1,\ldots,x_n]$ generated by $H_1,\ldots,H_R$. 
For each $i$ and $d$, let $H_{i,d}(x_1,\ldots,x_n)$ be a form of degree $d$ and let  $g_{i,d}(x_1,\ldots,x_n)$ be a polynomial of $\boldsymbol{x}=(x_1,\ldots,x_n)$ of lower degree. 
Letting 
$$
F_{i,d}=H_{i,d}+g_{i,d} \quad (d \in \Delta , \ 1\le i\le r_d), 
$$
we define the exponential sum 
\begin{align}\label{define E}
    \mathcal{E}(\boldsymbol{\alpha})=\sum_{(x_1,\ldots,x_n)\in \mathcal{B}_n(P)}\Lambda(x_1)\cdots \Lambda(x_n)
    e(\boldsymbol{\alpha}\cdot \boldsymbol{F}(x_1,\ldots,x_n)).
\end{align}

Note that the method of \cite[Lemma 6.9]{LiuZha23} still works for differing degrees.  
Therefore we have the following result. 

\begin{lemma}\label{lemma bound E1}
Let $\mathcal{E}(\boldsymbol{\alpha})$ be as in \eqref{define E}. 
    Suppose $\rank(\boldsymbol{H})=R$. Let $Q=P^{\varpi}$ with $0<\varpi<\frac{1}{4}$ and $\boldsymbol{\alpha}\in \mathfrak{m}=\mathfrak{m}(Q)$. 
    Then we have
    \begin{align}\label{bound E}
        \mathcal{E}(\boldsymbol{\alpha})\ll P^n Q^{-\frac{1}{2^{D}R}+\varepsilon}.
    \end{align}
\end{lemma}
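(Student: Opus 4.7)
My plan is to imitate the argument of \cite[Lemma 6.9]{LiuZha23} but feed it through the differing-degrees framework of \S3 (following Browning--Heath-Brown \cite{BroHB}) rather than the single-degree Weyl differencing of Birch.

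The first step is to dispose of the von Mangoldt weights by a combinatorial identity of Vaughan (or Heath-Brown) type applied to each of the $n$ factors $\Lambda(x_j)$. This decomposes $\mathcal{E}(\boldsymbol{\alpha})$ into a bounded number of sums of ``Type I'' form (where at least one variable is summed smoothly over a short range) and ``Type II'' bilinear form. The bilinear ranges will be chosen with care so that the eventual saving $Q^{-1/(2^D R)+\varepsilon}$ survives all the Cauchy--Schwarz losses; concretely I would take the cutoff at $P^{1/2}$ after a few reversals of order of summation, as is standard.

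For each piece, the plan is to apply Lemma \ref{lemma alternative} in the guise of the alternative: after possibly one application of Cauchy--Schwarz (for the Type II pieces) the inner sum reduces to an exponential sum of the shape $\Sigma(\boldsymbol{\alpha})$ in \S3, with the lower-degree polynomial perturbations $V_{i,d}$ absorbing the smooth coefficients coming from Vaughan's identity. Lemma \ref{lemma alternative} then says either we obtain a power saving of size $P^{-\delta}$ for some $\delta = \delta(D,R) > 0$ with $\delta \ge 1/(2^{D}R)$ up to $\varepsilon$-factors, or else simultaneously for every $d \in \Delta$ and every $1 \le i \le r_d$ the coefficient $\alpha_{i,d}$ admits a rational approximation $a_{i,d}/q$ with a common denominator $q \le Q_1 \le (\log P)^{O(1)} L^{-s_1(n,\boldsymbol{F})}$ and $\|q\alpha_{i,d}\| \le Q_1 P^{-d}$. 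The rank hypothesis $\rank(\boldsymbol{H})=R$ is essential at this last juncture: it guarantees that the forms $H_{i,d}$ are linearly independent, so that these $R$ rational approximations are genuinely independent constraints on $\boldsymbol{\alpha}$, placing it inside a major arc $\mathfrak{M}(q,\boldsymbol{a};Q')$ for an appropriately small $Q'$.

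The punchline is then a familiar contradiction argument: if the power-saving alternative failed, $\boldsymbol{\alpha}$ would lie in $\mathfrak{M}(Q_1) \subseteq \mathfrak{M}(Q)$ for $P$ large enough (because $Q_1$ is at most a power of $\log P$ times $L^{-s_1}$, which is $\le Q$ once $L \ge Q^{-1/(2^D R) - \varepsilon}$), contradicting $\boldsymbol{\alpha} \in \mathfrak{m}(Q)$. Hence $L$ must satisfy the claimed bound. The main obstacle I expect is the bookkeeping in the Type II step: the bilinear variables must be handled so that, after Cauchy--Schwarz, the remaining exponential sum is still of the form covered by Lemma \ref{lemma alternative} with a polynomial $V_{i,d}$ of degree strictly less than $d$; any slip here inflates either the exponent of $\log P$ or, worse, breaks the linear independence used to invoke the rank hypothesis. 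Once the Type I/Type II decomposition is chosen so that this hypothesis remains clean, the same chain of deductions as in \cite[Lemma 6.9]{LiuZha23} yields \eqref{bound E}.
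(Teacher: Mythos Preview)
Your overall strategy---Vaughan's identity, a Weyl-type alternative, and a contradiction with the minor-arc hypothesis---is exactly what the paper invokes: it simply records that the proof of \cite[Lemma~6.9]{LiuZha23} goes through unchanged for differing degrees.

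The one point where your sketch goes astray is the appeal to Lemma~\ref{lemma alternative}. That lemma's savings and its iterated approximation bounds $Q_d$ are governed by the Birch singular-locus dimensions $B_d(n,\boldsymbol{H})$, and the hypothesis $\rank(\boldsymbol{H})=R$ (mere linear independence of the $H_{i,d}$ as polynomials) yields only $B_d\le n-1$, which is far too weak: the resulting $s_1(n,\boldsymbol{H})$ can then be of size $\sum_d 2^{d-1}(d-1)r_d$, well in excess of $2^D R$ once $D\ge 4$, so the approximation alternative would not force $\boldsymbol{\alpha}\in\mathfrak{M}(Q)$ when $L\ge Q^{-1/(2^D R)}$. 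The argument in \cite{LiuZha23} does not route through the Birch/Browning--Heath-Brown machinery at all for this lemma. Instead, linear independence of the $H_{i,d}$ lets one choose, for each degree $d$, a set of $r_d$ monomials of degree $d$ whose coefficient matrix across the $H_{1,d},\ldots,H_{r_d,d}$ is invertible; one then performs elementary single-variable Weyl differencing aimed at those monomials, after one Cauchy--Schwarz to strip the Type~II coefficients. This produces rational approximations to an invertible linear combination of the $\alpha_{i,d}$, hence to each $\alpha_{i,d}$ separately, and the exponent $1/(2^D R)$ emerges from the $D-1$ differencings, the single Cauchy--Schwarz, and the $R$-fold denominator clearing. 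So the rank hypothesis does not certify ``independence of the approximations'' downstream of Lemma~\ref{lemma alternative}; it \emph{replaces} that lemma entirely with a cruder but hypothesis-appropriate differencing.
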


\section{Contribution from the minor arcs}
This section is devoted to treating the contribution from the minor arcs. We begin by numerical estimates for the quantities 
$$
B_d(n, \boldsymbol{F}), s_d(n, \boldsymbol{F}), t_d(n, \boldsymbol{F}) \quad (d\in \Delta, d=0). 
$$ 
Define the {\it singular loci} of the system $\boldsymbol{F}=(F_1,\ldots,F_R)$ as
\begin{equation*}
    V^{\ast}_{\boldsymbol{F}}(n)=\{\boldsymbol{x}\in\mathbb{A} ^{n}:\rank(J_{\boldsymbol{F}}(\boldsymbol{x}))<R\}
\end{equation*}
in the sense of Birch. 
Then it is clear that, by the definition of $B_d(n, \boldsymbol{F})$ in \eqref{def Bd},
\begin{equation}\label{compare dim}
    \dim V^{\ast}_{\boldsymbol{F}}(n) \ge B_d(n, \boldsymbol{F}) \quad (d \in \Delta).
\end{equation}
Also it is easy to see
\begin{equation*}
    \dim V^{\ast}_{\boldsymbol{F}}(n)\le R
\end{equation*}
for a nonsingular system $\boldsymbol{F}$.
Write
\begin{align}\label{def ud}
    u_d:=\sum^D_{i=d}2^{i-1}(i-1)r_i \quad (1\le d\le D).
\end{align}

\begin{lemma}\label{Lem/7/1}
    Let $s_d(n, \boldsymbol{F})$ be as in \eqref{def sd} for all $d$. Then 
    \begin{align}\label{s_d upper bound}
        s_C(n, \boldsymbol{F})=s_1(n, \boldsymbol{F})\le A_1(n, \boldsymbol{F}),
    \end{align}
    where
    \begin{equation}\label{def A1n}
        A_1(n, \boldsymbol{F}):=\frac{2^{D-1}(D-1)R}{n-\dim V^{\ast}_{\boldsymbol{F}}(n)}.
    \end{equation}   
\end{lemma}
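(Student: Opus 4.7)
The plan is to prove the lemma by a direct term-by-term comparison, using the inequality \eqref{compare dim} and elementary monotonicity bounds on the coefficients $2^{i-1}(i-1)$.

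First, the equality $s_C(n, \boldsymbol{F})=s_1(n, \boldsymbol{F})$ is essentially by definition: since $r_d=0$ for every $d \in \{1,\dots,C-1\}$ (recall $C=\min_{d\in\Delta}d$), every term in the sum \eqref{def sd} defining $s_1(n,\boldsymbol{F})$ with index $i<C$ vanishes, so the sums for $d=1$ and $d=C$ coincide. This is precisely the observation recorded just after \eqref{def sd}.

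Next, to bound $s_1(n,\boldsymbol{F})$ by $A_1(n,\boldsymbol{F})$, I would apply \eqref{compare dim} to replace each denominator $n-B_i(n,\boldsymbol{F})$ by the smaller quantity $n-\dim V^{\ast}_{\boldsymbol{F}}(n)$, obtaining
\begin{align*}
s_1(n,\boldsymbol{F}) \;=\; \sum_{i=1}^{D}\frac{2^{i-1}(i-1)r_i}{n-B_i(n,\boldsymbol{F})} \;\le\; \frac{1}{n-\dim V^{\ast}_{\boldsymbol{F}}(n)}\sum_{i=1}^{D} 2^{i-1}(i-1)r_i.
\end{align*}
Since $i\mapsto 2^{i-1}(i-1)$ is increasing in $i$ (for $i\ge 1$), each coefficient satisfies $2^{i-1}(i-1)\le 2^{D-1}(D-1)$. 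Combining this with $\sum_{i=1}^{D}r_i=R$ from \eqref{def/R} yields
\begin{align*}
\sum_{i=1}^{D}2^{i-1}(i-1)r_i \;\le\; 2^{D-1}(D-1)\sum_{i=1}^{D}r_i \;=\; 2^{D-1}(D-1)R,
\end{align*}
and the inequality $s_1(n,\boldsymbol{F})\le A_1(n,\boldsymbol{F})$ follows immediately from the definition \eqref{def A1n}.

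There is no real obstacle here: the proof is purely an exercise in bounding a weighted sum by its supremum times the total mass. The only ingredients needed are the inequality \eqref{compare dim}, the trivial monotonicity of $2^{i-1}(i-1)$, and the identity \eqref{def/R}. I would present the proof in roughly four lines.
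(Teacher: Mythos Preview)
Your proof is correct and follows exactly the route the paper intends: the paper's own proof is the single line ``By \eqref{compare dim} and elementary argument,'' and what you have written is precisely that elementary argument spelled out in full.
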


\begin{proof}
By \eqref{compare dim} and elementary argument. 
\end{proof}
It follows that 
    \begin{equation}\label{t_0 lower bound}
        t_0(n, \boldsymbol{F})=1-s_1(n, \boldsymbol{F})-\sum^D_{j=1}s_j(n, \boldsymbol{F})r_j \ge 1-A_1(n, \boldsymbol{F})(R+1).
    \end{equation}

\begin{lemma}\label{Lem/7/2}
    Let $t_d(n, \boldsymbol{F})$ be as in \eqref{def td} for all $d$.
    If $n-\dim V^{\ast}_{\boldsymbol{F}}(n)\ge 2^{D-1}D$ then
    \begin{align}
        \min_{d\in \Delta}{t_d(n, \boldsymbol{F})}\ge A_2(n, \boldsymbol{F}),
    \end{align}
    where
    \begin{equation}\label{def A2n}
        A_2(n, \boldsymbol{F}):=\frac{n-\dim V^{\ast}_{\boldsymbol{F}}(n)-2^{D-1}(D-1)R(R+1)}{2^{D-1}+2^{D-1}(D-1)R}-\mathcal{D}+D.
    \end{equation}
\end{lemma}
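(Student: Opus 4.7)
The plan is to bound each $t_d(n,\boldsymbol{F})$ for $d \in \Delta$ from below by $A_2(n,\boldsymbol{F})$ through a direct computation, splitting into two cases according to whether $d<D$ or $d=D$. Write $N := n - \dim V^{\ast}_{\boldsymbol{F}}(n)$. From \eqref{compare dim} one has $n - B_d(n,\boldsymbol{F}) \geq N$ for every $d$, and by Lemma \ref{Lem/7/1} one has $s_j(n,\boldsymbol{F}) \leq s_1(n,\boldsymbol{F}) \leq A_1(n,\boldsymbol{F}) = 2^{D-1}(D-1)R/N$ for every $j$.

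For $d \in \Delta$ with $d < D$, I would bound the numerator of $t_d$ from below by
\[
1 - s_{d+1} - \sum_{j=d+1}^{D} s_j r_j \;\geq\; 1 - A_1(1+R) \;=\; \frac{N - 2^{D-1}(D-1)R(R+1)}{N}
\]
and the denominator from above by
\[
\frac{2^{d-1}}{n - B_d} + s_{d+1} \;\leq\; \frac{2^{D-1}}{N} + A_1 \;=\; \frac{2^{D-1}(1 + (D-1)R)}{N}.
\]
Dividing and using $\mathcal{D}_d = \sum_{i \leq d} i r_i \leq \mathcal{D} - D r_D \leq \mathcal{D} - D$ (valid because $d < D$ and $r_D \geq 1$) would then give $t_d(n,\boldsymbol{F}) \geq A_2(n,\boldsymbol{F})$.

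For $d = D$, the convention $s_{D+1}(n,\boldsymbol{F}) = 0$ collapses the definition \eqref{def td} to the simpler form $t_D(n,\boldsymbol{F}) = (n - B_D)/2^{D-1} - \mathcal{D} \geq N/2^{D-1} - \mathcal{D}$. To conclude $t_D \geq A_2$, I would verify the algebraic inequality
\[
\frac{N}{2^{D-1}} - \frac{N - 2^{D-1}(D-1)R(R+1)}{2^{D-1}(1 + (D-1)R)} \;\geq\; D,
\]
which after clearing denominators amounts to $(D-1)R\,N \geq 2^{D-1}\bigl[D + (D-1)R(D - R - 1)\bigr]$. Substituting the hypothesis $N \geq 2^{D-1}D$ reduces this further to the elementary inequality $(D-1)R(R+1) \geq D$, which is immediate for every $R \geq 1$ and $D \geq 2$.

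The main obstacle is the case $d = D$: the generic argument used for $d < D$ relies on the slack $\mathcal{D}_d \leq \mathcal{D} - D$, which is unavailable when $d = D$ since $\mathcal{D}_D = \mathcal{D}$. One must instead recover the $+D$ appearing in the definition of $A_2$ directly from the more favourable closed form of $t_D$, and this is precisely where the numerical hypothesis $N \geq 2^{D-1}D$ enters. A secondary subtlety is that the case-$d<D$ estimate is only informative when the numerator $N - 2^{D-1}(D-1)R(R+1)$ is nonnegative, but this is automatic in the regime where $A_2$ carries any useful information, in particular in the application to Theorem~\ref{Thm2}.
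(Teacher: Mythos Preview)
Your proposal is correct and follows essentially the same approach as the paper: both split into the cases $d<D$ and $d=D$, bound the numerator and denominator of $t_d$ via \eqref{compare dim} and the estimate $s_j\le A_1$ for $d<D$, and for $d=D$ use the closed form $t_D=(n-B_D)/2^{D-1}-\mathcal{D}$ together with the hypothesis $N\ge 2^{D-1}D$ to verify the required algebraic inequality. Your write-up is in fact more explicit than the paper's (which leaves the $d=D$ comparison to ``elementary computations''), and you correctly flag the sign issue on the numerator that the paper passes over in silence.
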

\begin{proof}
By \eqref{compare dim}, we have
    \begin{align*}
        t_D(n, \boldsymbol{F})=\frac{n-B_D(n, \boldsymbol{F})}{2^{D-1}}-\mathcal{D}\ge \frac{n-\dim V^{\ast}_{\boldsymbol{F}}(n)}{2^{D-1}}-\mathcal{D}. 
    \end{align*}
While for $d\in \Delta$ but $d <D $, we deduce by \eqref{def ud} that 
\begin{align*}
        t_d(n, \boldsymbol{F}) 
        &\ge \frac{n-\dim V^{\ast}_{\boldsymbol{F}}(n)-u_{d+1}-\sum^D_{j=d+1}u_jr_j}{2^{d-1}+u_{d+1}}-\mathcal{D}_d \\
        &\ge \frac{n-\dim V^{\ast}_{\boldsymbol{F}}(n)-2^{D-1}(D-1)R(R+1)}{2^{D-1}+2^{D-1}(D-1)R}-\mathcal{D}+D.
\end{align*}
Hence if $n- \dim V^{\ast}_{\boldsymbol{F}}(n)\ge 2^{D-1}D$ then, by elementary computations,  
    $$\frac{n-\dim V^{\ast}_{\boldsymbol{F}}(n)}{2^{D-1}}-\mathcal{D} \ge \frac{n-\dim V^{\ast}_{\boldsymbol{F}}(n)-2^{D-1}(D-1)R(R+1)}{2^{D-1}+2^{D-1}(D-1)R}-\mathcal{D}+D.$$
This completes the proof.
\end{proof}

After these preparations, we can finally analyze the contribution from minor arcs precisely. 
Recall that each $F_{i,d}$ is decomposed as in \eqref{decom F into fgh 1}. 
Then each $h_{i,d}$ can be uniquely decomposed as
\begin{align}\label{decompose h into GH}
    h_{i,d}(\boldsymbol{y},\boldsymbol{z},\boldsymbol{w})=G_{i,d}(\boldsymbol{y},\boldsymbol{z},\boldsymbol{w})
    +H_{i,d}(\boldsymbol{w}),
\end{align}  
where $\deg_{\boldsymbol{w}}(G_{i,d})<d$ and $H_{i,d}$ is a form in $\boldsymbol{w}$ with degree $d$.
Write 
\begin{align*}
 \boldsymbol{H}=(H_1,\ldots,H_R)=(H_{i,d})_{\substack{d\in \Delta \\ 1\le i\le r_d}}.
\end{align*}

\begin{lemma}\label{lem/min/arcs/est}
    Let $\boldsymbol{F}=(F_1,\ldots,F_R)=(F_{i,d})_{\substack{d \in \Delta \\ 1\le i\le r_d}}$ be decomposed as in \eqref{decom F into fgh 1} and \eqref{decompose h into GH}. 
    Let $S_{\boldsymbol{F}}(\boldsymbol{\alpha})$ be as in \eqref{def/SFa}.
    Let $Q=P^{\varpi}$ with $0<\varpi<\frac{1}{4}$ and $\mathfrak{m}=\mathfrak{m}(Q) $ be as in \eqref{define mQ}.
    Assume that \text{\rm (i)}
    \begin{equation}\label{def kappa1}
        \begin{split}
            & \ \ m-\dim V^{\ast}_{\boldsymbol{f}}(m)-2^{D-1}(D-1)R(R+1) \\
            & \ge [(R+1)\varpi+\mathcal{D}-D](2^{D-1}+2^{D-1}(D-1)R)+1,
        \end{split}
    \end{equation}
    \text{\rm (ii)}
    \begin{equation}\label{def kappa1'}
        \begin{split}
            m-\dim V^{\ast}_{\boldsymbol{f}}(m)-2^{D-1}(D-1)R(R+1) \ge 2^{2D+2}(D-1)R^2(R+1)^2+1,
        \end{split}
    \end{equation}
    \text{\rm (iii)}
    \begin{equation}\label{def kappa2}
        \begin{split}
            & \ \ m+s-\dim V^{\ast}_{\boldsymbol{g}}(m+s)-2^{D-1}(D-1)R(R+1) \\
            &\ge \bigg[\bigg(R+1+\frac{1}{2^DR}\bigg)\varpi+\mathcal{D}-D\bigg] (2^{D-1}+2^{D-1}(D-1)R)+1,
        \end{split}
    \end{equation}
    \text{\rm (iv)}
    \begin{equation}\label{def kappa2'}
        \begin{split}
            & \ \ m+s-\dim V^{\ast}_{\boldsymbol{g}}(m+s)-2^{D-1}(D-1)R(R+1) \\
            &\ge [8R+8+2^{D+3}R(R+1)^2]2^{D-1}(D-1)R+1,
        \end{split}
    \end{equation}
and \text{\rm (v)}
\begin{equation*}
        \rank(\boldsymbol{H})=R.
\end{equation*}
Then there exists a constant $\delta=\delta_{\Delta,R}>0$ such that
\begin{align*}
\int_{\mathfrak{m}}S_{\boldsymbol{F}}(\boldsymbol{\alpha})d\boldsymbol{\alpha} \ll P^{n-\mathcal{D}}Q^{-\delta}.
\end{align*}
\end{lemma}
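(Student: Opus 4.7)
The plan is to apply Proposition~\ref{prop} with $\mathfrak n=\mathfrak m(Q)$ and an auxiliary parameter $X=Q^{c}$, where $c\in(0,1)$ is chosen small enough that $c(R+1)<1/(2^{D}R)$. Since $|\mathfrak m(Q)|\le 1$, the lemma reduces to inspecting the five summands on the right of Proposition~\ref{prop} term by term, together with a satisfactory pointwise bound on $\sup(\mathcal E)$.

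To control $\sup(\mathcal E)$, I would regard $h_{i,d}(\boldsymbol y,\boldsymbol z,\boldsymbol w)$ as a polynomial in $\boldsymbol w$ alone: by \eqref{decompose h into GH} its degree-$d$ part in $\boldsymbol w$ is the form $H_{i,d}(\boldsymbol w)$, while $G_{i,d}$ contributes only lower-degree terms in $\boldsymbol w$. Hypothesis (v) therefore places the inner sum $\mathcal E_{\boldsymbol y,\boldsymbol z}(\boldsymbol\alpha)$ exactly in the scope of Lemma~\ref{lemma bound E1}, which for every $\boldsymbol\alpha\in\mathfrak m(Q)$ gives
\begin{equation*}
\sup(\mathcal E)\ll P^{t}Q^{-\frac{1}{2^{D}R}+\varepsilon}.
\end{equation*}
In parallel I would invoke Lemmas~\ref{Lem/7/1} and~\ref{Lem/7/2} to translate $s_1,t_0,\min_{d}t_d$ into the numerical quantities $A_1,A_2$. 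Hypotheses (i), (iii) are engineered so that after the cancellations in \eqref{def A2n} the quantity $A_2$ at $(m,\boldsymbol f)$ and at $(m+s,\boldsymbol g)$ exceeds $\mathcal D$ by a slack linear in $\varpi$. Hypotheses (ii), (iv), which are independent of $\varpi$, force $A_1(m,\boldsymbol f)$ and $A_1(m+s,\boldsymbol g)$ to be of order $1/[R^{2}(R+1)^{2}]$, so that the ratios $t_0/s_1$ appearing in terms~2 and~4 exceed a large explicit multiple of $2^{D}R(R+1)^{2}$.

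The concluding step is a direct check, using $|\mathfrak m(Q)|\le 1$, that each of the five summands in Proposition~\ref{prop} is $\ll P^{n-\mathcal D}Q^{-\delta}$ for some absolute $\delta=\delta_{\Delta,R}>0$. Terms~1 and~3 absorb the factors $P^{-\mathcal D/2}$ and $P^{-3\mathcal D/4}X^{(R+1)/2}$ against the target using the $A_2$ lower bounds supplied by (i), (iii). Terms~2 and~4 use the $X^{-t_0/(8 s_1)}$ and $X^{-t_0/(16 s_1)}$ factors: the very large $t_0/s_1$ ratios produced by (ii), (iv) overwhelm the $X^{(R+1)/2}$ loss and deliver the required $Q^{-\delta}$ saving. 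Term~5 reads $P^{n-\mathcal D+\varepsilon}Q^{c(R+1)-1/(2^{D}R)}$ after Lemma~\ref{lemma bound E1} is applied, and it receives the saving precisely because $c$ was chosen so that $c(R+1)<1/(2^{D}R)$.

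The principal obstacle is the simultaneous calibration of $X=Q^{c}$: it must be small enough to tame the factor $X^{R+1}\sup(\mathcal E)$ in term~5, yet large enough that $Q^{c\cdot t_0/(8 s_1)}$ dominates the $X^{(R+1)/2}$ loss in terms~2 and~4. The two-tier nature of the hypotheses opens exactly such a window of admissible $c$: the $\varpi$-linked inequalities (i), (iii) deliver the savings in terms~1 and~3, while the $\varpi$-free inequalities (ii), (iv) furnish the $A_1$-control on which terms~2 and~4 hinge.
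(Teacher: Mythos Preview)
Your proposal has a genuine gap: the bound $|\mathfrak m(Q)|\le 1$ is far too crude to make any of the first four terms in Proposition~\ref{prop} acceptable under the stated hypotheses. Take term~1. With $|\mathfrak n|\le 1$ it reads $P^{n-\frac12\mathcal D+\varepsilon-\frac12\min_d t_d(m,\boldsymbol f)}$, and to beat the target $P^{n-\mathcal D}Q^{-\delta}$ you would need $\min_d t_d(m,\boldsymbol f)>\mathcal D$. But hypothesis~(i) fed into Lemma~\ref{Lem/7/2} yields only
\[
A_2(m,\boldsymbol f)\ \ge\ (R+1)\varpi+\frac{1}{2^{D-1}(1+(D-1)R)},
\]
which is of size $O(1)$ (indeed $<(R+1)/4$), whereas $\mathcal D\ge 2R$. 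So your claim that ``$A_2$ \dots\ exceeds $\mathcal D$ by a slack linear in $\varpi$'' is false: the $\mathcal D-D$ in~(i) is cancelled exactly by the $-\mathcal D+D$ in the definition of $A_2$. The same obstruction hits terms~2--4; for term~2, once you impose $c(R+1)<1/(2^DR)$, the saving $c\,t_0/(8s_1)$ is at most about $R+1$, which cannot recover the missing $P^{\mathcal D/2}$.

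What you are missing is the dyadic decomposition that supplies the measure saving. The paper splits $\mathfrak m(Q)$ into $O(\log P)$ annuli $\mathfrak n=\mathfrak M(2^iQ)\setminus\mathfrak M(2^{i-1}Q)\subset\mathfrak m(Q)$, each of which satisfies $|\mathfrak n|\ll Q^{R+1}P^{-\mathcal D}$. That factor $P^{-\mathcal D}$ (taken to the $1/2$ or $1/4$ power) is precisely what converts term~1 into $P^{n-\mathcal D+\varepsilon}Q^{(R+1)/2-\frac{1}{2\varpi}\min_d t_d(m,\boldsymbol f)}$, and now hypothesis~(i) is exactly what makes the $Q$-exponent negative. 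The specific choice $X=Q^{1/(2^{D+1}R(R+1))}$ then balances all five terms. Your use of Lemma~\ref{lemma bound E1} for $\sup(\mathcal E)$ and the translation via Lemmas~\ref{Lem/7/1}--\ref{Lem/7/2} are both correct; only the measure input needs to be replaced.
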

\begin{proof}
We choose $u$ such that
    $$ |\mathfrak{m}(2^uQ)| \le |\mathfrak{M}(2Q)\setminus \mathfrak{M}(Q)|.$$
Then it is easy to show that $u\ll \log P$ and
    $$
    \mathfrak{m}=\mathfrak{m}(Q)=\mathfrak{m}(2^uQ)\bigsqcup^u_{i=1}(\mathfrak{M}(2^iQ)\setminus \mathfrak{M}(2^{i-1}Q)).   
    $$
Hence, by dyadic argument, it suffices to prove
    $$\int_{\mathfrak{n}}S_{\boldsymbol{F}} (\boldsymbol{\alpha})d\boldsymbol{\alpha} \ll P^{n-\mathcal{D}}Q^{-\delta},$$
    where $\mathfrak{n}=\mathfrak{M}(2^iQ)\setminus \mathfrak{M}(2^{i-1}Q)$ for each $1\le i\le u$.
Note that $|\mathfrak{n}|\ll Q^{R+1}P^{-\mathcal{D}}$.
Recall that $t_0(n, \boldsymbol{F})>0$ and $t_d(n, \boldsymbol{F})>0$ for each $d \in \Delta$ is equivalent to $n$ is admissible for $\boldsymbol{F}$.

Now, we claim \eqref{def kappa1}, \eqref{def kappa1'}, \eqref{def kappa2} and \eqref{def kappa2'} guarantee that $m$ is 
admissible for $\boldsymbol{f}$ and $m+s$ is admissible for $\boldsymbol{g}$. This claim will be proved later. 
On this claim,  
we apply Proposition \ref{prop} and Lemma \ref{lemma bound E1}  to get
    \begin{align*}
        \int_{\mathfrak{n}}S_{\boldsymbol{F}}(\boldsymbol{\alpha})d\boldsymbol{\alpha} 
        \ll& P^{n-\mathcal{D} +\varepsilon-\frac{1}{2}\min_{d \in \Delta}{t_d(m, \boldsymbol{f})} }Q^{\frac{R+1}{2}}  + P^{n-\mathcal{D}+\varepsilon}X^{-\frac{t_0(m, \boldsymbol{f})}{8s_{1}(m, \boldsymbol{f})}}Q^{\frac{R+1}{2}}  \\
        &  + P^{n-\mathcal{D} +\varepsilon-\frac{1}{4}\min_{d \in \Delta}{t_d(m+s, \boldsymbol{g})} }X^{\frac{R+1}{2}} Q^{\frac{R+1}{4}} \\
        &  + P^{n-\mathcal{D}+\varepsilon}X^{\frac{R+1}{2}-\frac{t_0(m+s, \boldsymbol{g})}{16s_{1}(m+s, \boldsymbol{g})}}Q^{\frac{R+1}{4}}  \\
        &  +  P^{n-\mathcal{D}+\varepsilon }X^{R+1} Q^{-\frac{1}{2^{D}R}}. 
    \end{align*}
Note that Lemma \ref{lemma bound E1} is applicable in the above argument because of condition (v). On choosing
    $$
    X=Q^{\frac{1}{2^{D+1}R(R+1)}},
    $$
the above is
\begin{equation}\label{Int/Sn/F}
    \begin{aligned}
        \int_{\mathfrak{n}}S_{\boldsymbol{F}}(\boldsymbol{\alpha})d\boldsymbol{\alpha} 
        \ll& P^{n-\mathcal{D} +\varepsilon}Q^{\frac{R+1}{2}-\frac{1}{2\varpi}\min_{d \in \Delta}{t_d(m, \boldsymbol{f})} } + P^{n-\mathcal{D}+\varepsilon}Q^{\frac{R+1}{2}-\frac{t_0(m, \boldsymbol{f})}{2^{D+4}R(R+1)s_{1}(m, \boldsymbol{f})}}  \\
        &  + P^{n-\mathcal{D} +\varepsilon }Q^{\frac{R+1}{4}+\frac{1}{2^{D+2}R}-\frac{1}{4\varpi}\min_{d \in \Delta}{t_d(m+s, \boldsymbol{g})}} \\
        &  + P^{n-\mathcal{D}+\varepsilon}Q^{\frac{R+1}{4}+\frac{1}{2^{D+2}R}-\frac{t_0(m+s, \boldsymbol{g})}{2^{D+5}R(R+1)s_{1}(m+s, \boldsymbol{g})}}  \\
        &  +  P^{n-\mathcal{D}+\varepsilon } Q^{-\frac{1}{2^{D+1}R}}.
    \end{aligned}
\end{equation}

Now we compute the exponents in \eqref{Int/Sn/F}. 
It follows from \eqref{def kappa1}, \eqref{def kappa1'}, \eqref{def kappa2} and \eqref{def kappa2'} respectively that
    $$A_2(m, \boldsymbol{f})\ge (R+1)\varpi+\frac{1}{2^{D-1}+2^{D-1}(D-1)R},$$
    $$\frac{1-A_1(m, \boldsymbol{f})(R+1)}{A_1(m, \boldsymbol{f})}\ge 2^{D+3}R(R+1)^2+\frac{1}{2^{D-1}(D-1)R},$$
    \begin{align*}
        A_2(m+s, \boldsymbol{g})\ge &\bigg(R+1+\frac{1}{2^DR}\bigg)\varpi +\frac{1}{2^{D-1}+2^{D-1}(D-1)R}
    \end{align*}
    and
    \begin{align*}
        \frac{1-A_1(m+s, \boldsymbol{g})(R+1)}{A_1(m+s, \boldsymbol{g})}\ge &8R+8+2^{D+3}R(R+1)^2 +\frac{1}{2^{D-1}(D-1)R}.
    \end{align*}
From these as well as Lemmas \ref{Lem/7/1} and \ref{Lem/7/2},
we deduce that 
$$
\frac{R+1}{2}-\frac{1}{2\varpi}\min_{d \in \Delta}{t_d(m, \boldsymbol{f})}<0, 
$$
$$
\frac{R+1}{2}-\frac{t_0(m, \boldsymbol{f})}{2^{D+4}R(R+1)s_{1}(m, \boldsymbol{f})}<0, 
$$
$$
\frac{R+1}{4}+\frac{1}{2^{D+2}R}-\frac{1}{4\varpi}\min_{d \in \Delta}{t_d(m+s, \boldsymbol{g})}<0 
$$
and
$$
\frac{R+1}{4}+\frac{1}{2^{D+2}R}-\frac{t_0(m+s, \boldsymbol{g})}{2^{D+5}R(R+1)s_{1}(m+s, \boldsymbol{g})}<0. 
$$
Inserting these four formulae into \eqref{Int/Sn/F} proves the lemma. 

We also remark that  
the above four inequalities additionally justify the earlier claim about the admissibility. 
The proof is therefore complete. 
\end{proof}

Define
\begin{align}\label{def iota1}
    \begin{split}
        \iota_1:=
        &[\mathcal{D}-D+1+2^{D+3}R^2(R+1) ](R+1)2^{D-1}(D-1)+R+2^{D-1}(D-1)R(R+1)
    \end{split}
\end{align}
and
\begin{align}\label{def iota2}
    \begin{split}
        \iota_2:=
        &[\mathcal{D}-D+2^{D+3}R^2(R+1)+8R ](R+1)2^{D-1}(D-1)  \\
        &+R+2^{D-1}(D-1)R(R+1).
    \end{split}
\end{align}
Then $m-\dim V^{\ast}_{\boldsymbol{f}}(m)\ge \iota_1$ (res. $m+s-\dim V^{\ast}_{\boldsymbol{g}}(m+s)\ge \iota_2$) implies \eqref{def kappa1} and \eqref{def kappa1'} (res. \eqref{def kappa2} and \eqref{def kappa2'}) easily.
In other words, $m-\dim V^{\ast}_{\boldsymbol{f}}(m)\ge \iota_1$ (res. $m+s-\dim V^{\ast}_{\boldsymbol{g}}(m+s)\ge \iota_2$) implies $m$ is admissible for $\boldsymbol{f}$ (res. $m+s$ is admissible for $\boldsymbol{g}$).

Denote by $\codim V_{\boldsymbol{F}}^\ast(n)$ the codimension of the singular loci, where $\boldsymbol{F}=\boldsymbol{F}(x_1,\ldots,x_n)$ is a system of forms in $n$ variables and 
$\codim V_{\boldsymbol{F}}^\ast(n)=n-\dim V_{\boldsymbol{F}}^\ast(n).$
Define
\begin{align}\label{def iota3}
        \iota_3:=R\iota_2+\iota_1+DR^3+2R^2+R,
\end{align}
where $D$ is as in \eqref{def/C/D}.
Note that
\begin{align*}
    D^24^{D+2}R^5\ge \iota_3 
\end{align*}
since $D\ge3$ and $R\ge2$. 

\begin{lemma}\label{lemma key}
    Let $\boldsymbol{F}=(F_1,\ldots,F_R)=(F_{i,d})_{\substack{d\in \Delta \\ 1\le i\le r_d }}$ with each $F_{i,d}$ being a form of d in $n$ variables. 
    Let $\iota_1$ and $\iota_2$ be as in \eqref{def iota1} and \eqref{def iota2}. 
    Suppose that $\boldsymbol{F}$ is a nonsingular system and
    \begin{align*}
        n\ge D^24^{D+2}R^5.
    \end{align*}
    Then up to a permutation of variables, $\boldsymbol{F}$ can be decomposed as in \eqref{decom F into fgh 1} and \eqref{decompose h into GH} such that {\rm (i)} $\codim V_{\boldsymbol{f}}^\ast(m) \ge \iota_1$,  {\rm (ii)} $\codim V_{\boldsymbol{g}}^\ast(m+s) \ge \iota_2$
      and {\rm (iii)} $\rank(\boldsymbol{H})=R$.
\end{lemma}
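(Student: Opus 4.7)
The plan is to construct the partition $\boldsymbol{x}=(\boldsymbol{y},\boldsymbol{z},\boldsymbol{w})$ by a three-stage selection in reverse order: first fix the block $\boldsymbol{w}$ to secure the rank condition \text{\rm (iii)}, then fix $\boldsymbol{z}$ inside the remainder to secure condition \text{\rm (ii)}, and take $\boldsymbol{y}$ to be the leftover coordinates so that \text{\rm (i)} is forced. The hypothesis $n\ge D^2 4^{D+2}R^5 \ge \iota_3=R\iota_2+\iota_1+DR^3+2R^2+R$ provides enough variable budget for all three stages to succeed simultaneously.

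For \text{\rm (iii)}: nonsingularity of $\boldsymbol{F}$ forces $F_1,\ldots,F_R$ to be linearly independent as vectors in $\R[x_1,\ldots,x_n]$, so there exist $R$ monomials on which their coefficient matrix is invertible. The union of the variables appearing in these monomials has cardinality at most $DR$, so any block $\boldsymbol{w}$ containing these variables yields $\rank(\boldsymbol{H})=R$. The $DR^3+2R^2+R$ contribution to $\iota_3$ reflects this cost together with auxiliary variables needed to stabilize the later choices.

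For \text{\rm (ii)} and \text{\rm (i)}, I appeal to a quantitative coordinate-restriction lemma of the type developed in \cite{LiuZha23}: if $\boldsymbol{F}$ is nonsingular on $\A^n$ (so $\dim V^\ast_{\boldsymbol{F}}(n)\le R$), and $W\subseteq \A^n$ is a coordinate subspace obtained by zeroing $r$ variables chosen from outside a controlled ``bad set'', then $\codim V^\ast_{\boldsymbol{F}|_W}(n-r)$ drops from $\codim V^\ast_{\boldsymbol{F}}(n)$ by at most an explicit function of $(r,R,D)$. Applying this lemma with $W$ defined by $\boldsymbol{w}=\boldsymbol{0}$ (dimension $m+s$) controls the singular locus of the full degree-$d$ restriction $\boldsymbol{f}+\boldsymbol{g}$; after isolating the forced component $\{\boldsymbol{z}=\boldsymbol{0}\}\subseteq V^\ast_{\boldsymbol{g}}(m+s)$ (which contributes dimension $m$ automatically since every monomial of $g_{i,d}$ carries a $\boldsymbol{z}$-factor), a pigeonhole count over permutations of the remaining free variables extracts a partition into $\boldsymbol{y}$ and $\boldsymbol{z}$ realizing $\codim V^\ast_{\boldsymbol{g}}(m+s)\ge \iota_2$. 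A further application of the same lemma to the $\boldsymbol{y}$-subspace of dimension $m$ then yields \text{\rm (i)}.

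The principal obstacle is establishing the restriction lemma with constants precisely matching $\iota_1$ and $\iota_2$. Because we are forced to work with axis-aligned coordinate subspaces rather than generic linear ones, and because the decomposition $F_{i,d}=f_{i,d}+g_{i,d}+h_{i,d}$ introduces additional lower-degree terms whose interaction with the Jacobian must be tracked, one has to quantify the enlargement of the singular locus both from the restriction itself and from the splitting. The $R$-fold factor in $\iota_3 = R\iota_2+\iota_1+\cdots$ reflects an iterated one-variable-at-a-time removal argument in which, in the worst case, each of the $R$ forms can contribute an $O(\iota_2)$-sized block to the required slack before the codimension condition stabilizes.
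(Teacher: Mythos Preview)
The paper's own proof is a one-line reference: ``Similar to the proof of \cite[Lemma 8.2]{LiuZha23}.'' So there is no in-paper argument to compare against; the content lives entirely in Liu--Zhao's Lemma~8.2.

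Your high-level plan---choose $\boldsymbol{w}$ first to secure $\rank(\boldsymbol{H})=R$, then partition the remaining variables while tracking the drop in $\codim V^\ast$---is broadly the Liu--Zhao approach. However, what you have written is an outline with an explicitly acknowledged gap, not a proof. You name the ``quantitative coordinate-restriction lemma'' as the principal obstacle and then neither state it nor prove it; but that lemma \emph{is} the entire content of the result, with everything else being numerical bookkeeping. In Liu--Zhao the relevant input is a slicing estimate controlling how much $\codim V^\ast$ can decrease when a single coordinate is set to zero, and iterating this over the variables to be removed is what generates the constants $\iota_1,\iota_2,\iota_3$. Without supplying that lemma and carrying through the iteration, you have not actually proved anything.

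Two specific points are also off. First, your assertion $\{\boldsymbol{z}=\boldsymbol{0}\}\subseteq V^\ast_{\boldsymbol{g}}(m+s)$ is false in general: although every monomial of $g_{i,d}$ carries a $\boldsymbol{z}$-factor (so the $\boldsymbol{y}$-partials vanish on $\boldsymbol{z}=\boldsymbol{0}$), the partials $\partial g_{i,d}/\partial z_l$ evaluated at $\boldsymbol{z}=\boldsymbol{0}$ are pure $\boldsymbol{y}$-forms that are generically of full rank, so this locus need not lie in the singular set. Second, your reading of $R\iota_2$ in $\iota_3$ as ``each of the $R$ forms contributes an $O(\iota_2)$ block'' is post-hoc rationalisation rather than derivation; the factor comes out of the iterated single-variable slicing, not a per-form decomposition.
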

\begin{proof}
Similar to the proof of \cite[Lemma 8.2]{LiuZha23}. 
\end{proof}

Combining Lemmas \ref{lem/min/arcs/est} and \ref{lemma key}, we immediately get the following estimate on 
the contribution from the minor arcs. 

\begin{lemma}\label{lemma over minor2}
Let $\boldsymbol{F}=(F_1,\ldots,F_R)$ be a system of forms with degree set $\Delta$ in $n$ variables. 
Let $D$ be as in \eqref{def/C/D} and $S_{\boldsymbol{F}}(\boldsymbol{\alpha})$ be as in \eqref{def/SFa}.
Let $Q=P^{\varpi}$ with $0<\varpi<\frac{1}{4}$ and $\mathfrak{m}=\mathfrak{m}(Q)$ be as in \eqref{define mQ}.
    Suppose that $\boldsymbol{F}$ is a nonsingular system and 
\begin{align}\label{Lem75/n>}
n\ge D^24^{D+2}R^5.
\end{align}
    Then there exists a constant $\delta=\delta_{\delta,R}>0$ such that
    \begin{align*}
        \int_{\mathfrak{m}}S_{\boldsymbol{F}}(\boldsymbol{\alpha})d\boldsymbol{\alpha} \ll P^{n-\mathcal{D} }Q^{-\delta},
    \end{align*}
    where $\mathcal{D}$ is as in \eqref{def/R/calD}.
\end{lemma}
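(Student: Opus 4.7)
The plan is to derive this lemma by composing the two main results already established in Section 7, namely Lemma \ref{lemma key} and Lemma \ref{lem/min/arcs/est}. Since the hypothesis $n \ge D^2 4^{D+2} R^5$ is precisely the input required by Lemma \ref{lemma key}, my first step would be to invoke that lemma to produce, after a permutation of the $n$ variables, a decomposition
$F_{i,d}(\boldsymbol{y},\boldsymbol{z},\boldsymbol{w}) = f_{i,d}(\boldsymbol{y}) + g_{i,d}(\boldsymbol{y},\boldsymbol{z}) + h_{i,d}(\boldsymbol{y},\boldsymbol{z},\boldsymbol{w})$
as in \eqref{decom F into fgh 1} together with the further splitting \eqref{decompose h into GH}, satisfying the three structural conclusions $\codim V_{\boldsymbol{f}}^\ast(m) \ge \iota_1$, $\codim V_{\boldsymbol{g}}^\ast(m+s) \ge \iota_2$, and $\rank(\boldsymbol{H}) = R$.

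Next I would verify that these conclusions really do feed into the five hypotheses of Lemma \ref{lem/min/arcs/est}. This verification was in effect already made in the discussion immediately preceding Lemma \ref{lemma key}, where it was pointed out that the bound $m - \dim V^\ast_{\boldsymbol{f}}(m) \ge \iota_1$ with $\iota_1$ given by \eqref{def iota1} implies both \eqref{def kappa1} and \eqref{def kappa1'}, and similarly $m+s - \dim V^\ast_{\boldsymbol{g}}(m+s) \ge \iota_2$ with $\iota_2$ as in \eqref{def iota2} implies \eqref{def kappa2} and \eqref{def kappa2'}. The rank condition \text{\rm (v)} in Lemma \ref{lem/min/arcs/est} is then immediate from conclusion \text{\rm (iii)} of Lemma \ref{lemma key}. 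Hence all five hypotheses of Lemma \ref{lem/min/arcs/est} hold for the decomposition produced.

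With these hypotheses in hand, I would simply apply Lemma \ref{lem/min/arcs/est} to obtain a constant $\delta = \delta_{\Delta,R} > 0$ for which
\[\int_{\mathfrak{m}} S_{\boldsymbol{F}}(\boldsymbol{\alpha})\, d\boldsymbol{\alpha} \ll P^{n-\mathcal{D}} Q^{-\delta},\]
which is exactly the claimed bound. The value of $\delta$ propagates through \eqref{Int/Sn/F}: it is the minimum of the five exponent savings produced there with the choice $X = Q^{1/(2^{D+1}R(R+1))}$, and it is strictly positive precisely because Lemma \ref{lemma key} guarantees the four exponent inequalities stated at the end of the proof of Lemma \ref{lem/min/arcs/est}.

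There is no genuine obstacle here: the proof is a clean assembly of the two preparatory lemmas. If any step requires care it is simply the book-keeping check that $D^2 4^{D+2} R^5 \ge \iota_3$, which holds because $D \ge 3$ and $R \ge 2$ (this is noted just before Lemma \ref{lemma key}), ensuring that the hypothesis on $n$ in the statement is strong enough to trigger Lemma \ref{lemma key}. Everything else is, at this stage, purely a matter of citation.
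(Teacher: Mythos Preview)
Your proposal is correct and follows exactly the paper's approach: the paper's proof consists of the single sentence ``Combining Lemmas \ref{lem/min/arcs/est} and \ref{lemma key}, we immediately get the following estimate on the contribution from the minor arcs.'' Your more detailed verification that the conclusions of Lemma \ref{lemma key} feed into the hypotheses of Lemma \ref{lem/min/arcs/est} is accurate and simply unpacks what the paper leaves implicit.
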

\begin{remark}\label{pre for ext}
    If $\mathfrak{n} \subseteq \mathfrak{m}(Q)$, we still have 
    $$
      \int_{\mathfrak{n}}S_{\boldsymbol{F}}(\boldsymbol{\alpha})d\boldsymbol{\alpha} \ll P^{n-\mathcal{D} }Q^{-\delta}.
    $$
\end{remark}

\section{Gauss sums for system of forms}  
\subsection{Gauss sums for system of forms} 
During the process of estimating contribution from the major arcs, one encounters Gauss sums associated to our system $\boldsymbol{F}$ against  
Dirichlet characters, for example the sum over $\boldsymbol{h}$ in \eqref{prestep of bounding Mj} below. 
We are going to estimate Gauss sums of this kind on average in the next lemma that is necessary for calculating 
the contribution from the enlarged major arcs.  

From now on we specify 
\begin{equation}\label{def/pi}
\varpi=\frac{1}{4(R+1)}. 
\end{equation}
This exact value of $\varpi$ will help to simplify some calculations in the proof of the next lemma. 

\begin{lemma}\label{lem/bound/gauss}
Let $\chi_j \bmod k_j $ be primitive characters for $j = 1, \ldots , n$, and put $k_0 = [k_1, \ldots, k_n]$.
Let $\chi^0$ denote the principal character modulo $q$ and 
\begin{equation}\label{Lem81/n>} 
n\ge D^24^{D+6}R^5. 
\end{equation}
Then
\begin{equation}\label{prestep of bounding Mj}
        \sum_{\substack{q \le Q \\ k_0|q}}
        \frac{1}{\varphi^n(q)}\ \sideset{}{^{\dagger}}\sum_{\boldsymbol{a} \bmod q} \bigg| \ \sideset{}{^\ast} 
        \sum_{\boldsymbol{h} \bmod q}\bar{\chi}_1\chi^0(h_1) \cdots \bar{\chi}_n\chi^0(h_n)
        e\bigg(\frac{\boldsymbol{a}\cdot\boldsymbol{F}(\boldsymbol{h})}{q}\bigg)\bigg| 
        \ll k_0^{-\frac{3}{2}+\varepsilon}
\end{equation}
and
\begin{equation}\label{Bq bound}
        \sideset{}{^{\dagger}}\sum_{\boldsymbol{a} \bmod q} \bigg| \ \sideset{}{^\ast}\sum_{\boldsymbol{h} \bmod q}e\bigg(\frac{\boldsymbol{a}\cdot\boldsymbol{F}(\boldsymbol{h})}{q}\bigg)\bigg| 
     \ll q^{n-\frac{3}{2}}.
\end{equation} 
Here
$$
\sideset{}{^{\dagger} }\sum_{\boldsymbol{a} \bmod q}=\sum_{\substack{1\le \boldsymbol{a}\le q\\ (a_1,\ldots,a_R,q)=1}}, 
\quad 
\sideset{}{^\ast}\sum_{\boldsymbol{h} \bmod q}=\sum_{\substack{1\le \boldsymbol{h}\le q\\ (h_i,q)=1}}.
$$
\end{lemma}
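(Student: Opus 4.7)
The plan is to prove \eqref{Bq bound} first, then bootstrap to \eqref{prestep of bounding Mj} using multiplicativity and the Gauss-sum identity for primitive Dirichlet characters. The generous dimension hypothesis \eqref{Lem81/n>} is used to guarantee that all error terms can be absorbed with room to spare.

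For \eqref{Bq bound}, set $B(q,\boldsymbol{a})=\sideset{}{^*}\sum_{\boldsymbol{h}\bmod q}e(\boldsymbol{a}\cdot\boldsymbol{F}(\boldsymbol{h})/q)$. First I would remove the absolute value by introducing a unimodular weight $\epsilon(\boldsymbol{a})$ with $|B(q,\boldsymbol{a})|=\epsilon(\boldsymbol{a})B(q,\boldsymbol{a})$, switch the order of summation, and apply Cauchy--Schwarz in $\boldsymbol{h}$ to reduce to the mean square
\[
\sideset{}{^*}\sum_{\boldsymbol{h}\bmod q}\Big|\sideset{}{^{\dagger}}\sum_{\boldsymbol{a}\bmod q}\epsilon(\boldsymbol{a})e(\boldsymbol{a}\cdot\boldsymbol{F}(\boldsymbol{h})/q)\Big|^{2}.
\]
Expanding the square and using additive orthogonality (together with M\"obius to enforce the coprimality $(a_1,\ldots,a_R,q)=1$) reduces this to a congruence count for pairs $(\boldsymbol{h}_1,\boldsymbol{h}_2)$ with $\boldsymbol{F}(\boldsymbol{h}_1)\equiv\boldsymbol{F}(\boldsymbol{h}_2)\pmod{q}$. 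The Birch-style Weyl differencing that underlies Proposition~\ref{prop}, adapted from the real box $P\mathfrak{B}$ to the complete residue system $\Z/q\Z$, then bounds this count with the required saving of $q^{3/2}$ over the trivial bound $q^{n+R}$ under \eqref{Lem81/n>}.

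For \eqref{prestep of bounding Mj}, I would split the modulus $q=k_0q'$ multiplicatively via the CRT, which decomposes $B(q,\boldsymbol{a})$ into a product of a Gauss sum modulo $k_0$ and one modulo $q'$. The $q'$-factor is untwisted since $(q',k_0)=1$, so \eqref{Bq bound} applies and the $q'$-sum converges comfortably: $\sum_{q'}(q')^{n-3/2+\varepsilon}/\varphi(q')^{n}\ll 1$. On the $k_0$-factor, primitivity of each $\chi_j\bmod k_j$ permits the expansion $\bar\chi_j(h_j)\tau(\chi_j)=\sum_{y_j\bmod k_j}\chi_j(y_j)e(h_jy_j/k_j)$, with $|\tau(\chi_j)|=k_j^{1/2}$. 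This converts the character-twisted sum into a pure additive exponential sum for $\boldsymbol{F}$ with an extra linear phase, and a variant of the mean-square bound from Part I handles this shifted Gauss sum, yielding after summation over the $y_j$ and $\boldsymbol{a}$ the saving $k_0^{-3/2+\varepsilon}$.

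The main obstacle will be Part I: the pointwise Deligne bound $|B(q,\boldsymbol{a})|\ll q^{n/2}$ is available only at good moduli where the reduction of $\boldsymbol{F}$ modulo $q$ remains nonsingular, so uniform control over arbitrary $q$ (particularly bad prime powers) demands the robust Weyl-differencing iteration behind Proposition~\ref{prop}. Preserving its savings through the character twist of Part II is precisely the ``new insight'' flagged in the Introduction. A secondary technical point is to verify that the linear phase shift introduced by the Gauss-sum expansion does not disrupt the rank condition required for the Birch-type estimate; this should follow because adjoining linear forms preserves the dimension of the singular locus of the higher-degree part of the system.
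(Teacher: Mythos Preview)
Your Part~I is close in spirit to the paper but slightly garbled, and your Part~II has a genuine gap.

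\textbf{Part I.} You say you apply Cauchy--Schwarz in $\boldsymbol{h}$, but the conclusion you describe (a congruence count for pairs $(\boldsymbol{h}_1,\boldsymbol{h}_2)$ with $\boldsymbol{F}(\boldsymbol{h}_1)\equiv\boldsymbol{F}(\boldsymbol{h}_2)\pmod q$) is what one obtains from Cauchy in $\boldsymbol{a}$: $\sum_{\boldsymbol{a}}^{\dagger}|B|\le q^{R/2}(\sum_{\boldsymbol{a}}|B|^{2})^{1/2}$ followed by expanding $|B|^{2}$ in $\boldsymbol{h}_1,\boldsymbol{h}_2$. The paper does exactly this. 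It then handles the resulting sum not by a direct congruence count but by viewing $\sum_{\boldsymbol{a}}^{\dagger}|B|^{2}$ as $\sum_{\boldsymbol{a}}^{\dagger}S_{\boldsymbol{F}^{*}}(\boldsymbol{a}_{*}/q)$ for the doubled nonsingular system $\boldsymbol{F}^{*}$ in $2n$ variables and $2R$ forms, thickening each point $\boldsymbol{a}_{*}/q$ to an interval of radius $q^{-2}$ so that Proposition~\ref{prop} applies verbatim with $P$ replaced by $q$. The hypothesis $n\ge D^{2}4^{D+6}R^{5}$ is exactly what makes $2n\ge D^{2}4^{D+4}(2R)^{5}$, giving admissibility for the doubled system.

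\textbf{Part II.} Your CRT splitting $q=k_0q'$ with $(k_0,q')=1$ is not valid in general: $k_0\mid q$ does not force $q/k_0$ to be coprime to $k_0$ (e.g.\ $k_0=2$, $q=4$). So the multiplicative decomposition of the Gauss sum fails, and with it your reduction of the twisted case to the untwisted one. The paper avoids this issue entirely, and in a much simpler way than your Gauss-sum expansion: it proves from the outset the uniform bound
\[
\sideset{}{^{\dagger}}\sum_{\boldsymbol{a}\bmod q}\bigg|\sum_{\boldsymbol{h}\bmod q}b(\boldsymbol{h})\,e\bigg(\frac{\boldsymbol{a}\cdot\boldsymbol{F}(\boldsymbol{h})}{q}\bigg)\bigg|\ll q^{\,n-3/2}
\]
for \emph{any} weight $b(\boldsymbol{h})=\prod_j b(h_j)$ with $|b(h_j)|\le 1$. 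The Cauchy step and the doubling are completely insensitive to $b$, so the characters cost nothing. Taking $b\equiv 1$ gives \eqref{Bq bound}; taking $b_j=\bar{\chi}_j\chi^{0}$ and summing $\sum_{q\le Q,\;k_0\mid q}\varphi(q)^{-n}q^{\,n-3/2}\ll k_0^{-3/2+\varepsilon}$ gives \eqref{prestep of bounding Mj}. No CRT, no Gauss-sum identity, and no linear-shift analysis is needed.
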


Note that the bound \eqref{Lem81/n>} in the above lemma is more restrictive than the condition \eqref{Lem75/n>}  
in Lemma \ref{lemma over minor2} on the minor arcs. Lemma \ref{lem/bound/gauss} needs stronger assumption on $n$ 
because of the stronger saving $-3/2$ on the right-hand side of \eqref{prestep of bounding Mj} or \eqref{Bq bound}. 
Of course the final condition on $n$ in \eqref{Thm1/n>}  of Theorem \ref{Thm1} comes from \eqref{Lem81/n>}. 

\begin{proof}
    It suffices to show that 
    $$
    \sideset{}{^{\dagger}}\sum_{\boldsymbol{a} \bmod q} \bigg|\sum_{\boldsymbol{h} \bmod q}b(\boldsymbol{h})e\bigg(\frac{\boldsymbol{a}\cdot\boldsymbol{F}(\boldsymbol{h})}{q}\bigg)\bigg| 
    \ll q^{n-\frac{3}{2}}
    $$
with $b(\boldsymbol{h})=\prod_{j=1}^n b(h_j) $, $| b(h_j)| \le 1$ for all $j$, and then elementary argument 
implies the upper bound we desire.
    
Applying Cauchy's inequality, we have
    \begin{equation}\label{app Cauchy}
        \sideset{}{^{\dagger}}\sum_{\boldsymbol{a} \bmod q} \bigg|\sum_{\boldsymbol{h} \bmod q}b(\boldsymbol{h})e\bigg(\frac{\boldsymbol{a}\cdot\boldsymbol{F}(\boldsymbol{h})}{q}\bigg)\bigg|\le q^{\frac{R}{2}} \bigg( \ \sideset{}{^{\dagger}}\sum_{\boldsymbol{a} \bmod q} \bigg|\sum_{\boldsymbol{h} \bmod q}b(\boldsymbol{h})e\bigg(\frac{\boldsymbol{a}\cdot\boldsymbol{F}(\boldsymbol{h})}{q}\bigg)\bigg|^2 \ \bigg)^{\frac{1}{2}}.
    \end{equation}
    Squaring out the inner term above, we get
    $$
    \bigg|\sum_{\boldsymbol{h} \bmod q}b(\boldsymbol{h})e\bigg(\frac{\boldsymbol{a}\cdot\boldsymbol{F}(\boldsymbol{h})}{q}\bigg)\bigg|^2=\sum_{\boldsymbol{h_1} \bmod q}\sum_{\boldsymbol{h_2} \bmod q}b(\boldsymbol{h_1})\bar{b}(\boldsymbol{h_2})e\bigg(\frac{\boldsymbol{a}\cdot(\boldsymbol{F}(\boldsymbol{h_1})-\boldsymbol{F}(\boldsymbol{h_2}))}{q}\bigg),
    $$
    which is of the form
    \begin{equation}\label{write in critical form}
        S_{\boldsymbol{F}^{\ast}}\bigg(\frac{\boldsymbol{a}_{\ast}}{q}\bigg)=\sum_{1\le \boldsymbol{k}\le q}\lambda (\boldsymbol{k})e\bigg(\frac{\boldsymbol{a}_{\ast}\cdot\boldsymbol{F}^{\ast}(\boldsymbol{k})}{q}\bigg),
    \end{equation}
    where $\boldsymbol{k}=(\boldsymbol{h}_1, \boldsymbol{h}_2) \in \mathbb{Z}^{2n}$, $\boldsymbol{a}_{\ast}=(\boldsymbol{a}, -\boldsymbol{a}) \in \mathbb{Z}^{2R}$, $\lambda(\boldsymbol{k})=\prod^{2n}_{i=1}\lambda (k_i)$ with $\lambda (\cdot) \ll \log (\cdot)$ and 
    $$\boldsymbol{F}^{\ast}(\boldsymbol{k})=(F_1(\boldsymbol{h}_1),\ldots, F_R(\boldsymbol{h}_1), F_1(\boldsymbol{h}_2),\ldots, F_R(\boldsymbol{h}_2))=(F^{\ast}_1(\boldsymbol{k}),\ldots,F^{\ast}_{2R}(\boldsymbol{k}))$$ is still a nonsingular system with degree set $\Delta$. 

    Write $\boldsymbol{k}=(\boldsymbol{y},\boldsymbol{z},\boldsymbol{w})$, where $\boldsymbol{y}\in \N^m, \boldsymbol{z}\in \N^{s}, \boldsymbol{w}\in \N^t$ and
    $m+s+t=2n$.
    Put
    \begin{align*}
        \begin{split}
            \iota^{\ast}_1:=
            &[\mathcal{D}-D+1+2^{D+3}(2R)^2(2R+1) ](2R+1)2^{D-1}(D-1)+2R \\
            &+2^{D-1}(D-1)2R(2R+1)+2^{2D+5 }(D-1)(2R)^2(2R+1) \cdot \bigg(\frac{R}{2}+2\bigg),
        \end{split}
    \end{align*}
    \begin{align*}
        \begin{split}
            \iota^{\ast}_2:=
            &[\mathcal{D}-D+2^{D+3}(2R)^2(2R+1)+16R ](2R+1)2^{D-1}(D-1)  \\
            &+2R+2^{D-1}(D-1)(2R)(2R+1)+2^{2D+5}(D-1)(2R)^2(2R+1)^2 \cdot \frac{3}{2}, 
        \end{split}
    \end{align*}
    and
    \begin{equation*}
        \iota^{\ast}_3:=2R\iota^{\ast}_2+\iota^{\ast}_1+D(2R)^3+2(2R)^2+2R
    \end{equation*}
    like \eqref{def iota3}.
    Then it is easy to check
    $$
    2n\ge D^24^{D+4}(2R)^5\ge \iota^{\ast}_3.
    $$
    Therefore, by Lemma \ref{lemma key}, there exists a decomposition for $\boldsymbol{F}^{\ast}$ as
    $$
    F^{\ast}_{i,d}(\boldsymbol{y},\boldsymbol{z},\boldsymbol{w})=f^{\ast}_{i,d}(\boldsymbol{y})+g^{\ast}_{i,d}(\boldsymbol{y},\boldsymbol{z})
       +h^{\ast}_{i,d}(\boldsymbol{y},\boldsymbol{z},\boldsymbol{w}) \quad (d \in \Delta, \ 1\le i\le 2r_d)
    $$
    such that $m-\dim V^{\ast}_{\boldsymbol{f}^{\ast}}(m)\ge \iota^{\ast}_1$ and 
    $m+s-\dim V^{\ast}_{\boldsymbol{g}^{\ast}}(m+s)\ge \iota^{\ast}_2$.
    Moreover, by the argument after Lemma \ref{lem/min/arcs/est}, we have that $m$ is admissible for $\boldsymbol{f}^{\ast}=(f^{\ast}_1,\ldots,f^{\ast}_{2R})$
    and $m+s$ is admissible for $\boldsymbol{g}^{\ast}=(g^{\ast}_1,\ldots,g^{\ast}_{2R})$.
    
Define 
\begin{equation}\label{def n}
    \begin{split}
        \mathfrak{n}
        &=\bigsqcup _{\substack{ 1\le a_1,\ldots, a_R\le q\\ (a_1,\ldots, a_R, q)=1}}\mathfrak{n}(\boldsymbol{a}) \\ 
        &=\bigsqcup _{\substack{ 1\le a_1,\ldots, a_R\le q\\ (a_1,\ldots, a_R, q)=1}} \bigg\{(\alpha_{i,d},-\alpha_{i,d})_{\substack{d \in \Delta \\1\le i\le r_d}} 
        \in \R^{2R}:\ \bigg|\alpha_{i,d}-\frac{a_{i,d}}{q}\bigg|\le \frac{1}{q^2} \ \textrm{ for all } i, d \bigg\}, 
    \end{split}
\end{equation}
\begin{equation}\label{def SF*}
        S_{\boldsymbol{F}^{\ast}}(\boldsymbol{\alpha}):=S_{\boldsymbol{F}^{\ast}}\bigg(\frac{\boldsymbol{a}_{\ast}}{q}\bigg) \ \ \text{for} \ \ \boldsymbol{\alpha} \in \mathfrak{n}(\boldsymbol{a})
    \end{equation}
    and
    \begin{align*}
        \mathcal{E}^{\ast}_{\boldsymbol{y},\boldsymbol{z}}(\boldsymbol{\alpha}):=
        \sum_{1\le \boldsymbol{w}\le q }\lambda(\boldsymbol{w})e\bigg(\frac{\boldsymbol{a}_{\ast}\cdot \boldsymbol{h}^{\ast}(\boldsymbol{y},\boldsymbol{z},\boldsymbol{w})}{q}\bigg) \ \ \text{for} \ \ \boldsymbol{\alpha} \in \mathfrak{n}(\boldsymbol{a}),
    \end{align*}
    where $\boldsymbol{h}^{\ast}=(h^{\ast}_1,\ldots,h^{\ast}_{2R})$.
    Therefore we can employ Proposition \ref{prop} and Remark \ref{prop rem} to get 
    \begin{equation}\label{bound SF*1}
        \begin{split}
            \int_{\mathfrak{n}}S_{\boldsymbol{F}^{\ast}}(\boldsymbol{\alpha})d\boldsymbol{\alpha} \ll
            & \, q^{2n-\mathcal{D} +\varepsilon-\frac{1}{2}\min_{d \in \Delta}{t_d(m, \boldsymbol{f}^{\ast})} } |\mathfrak{n}|^{\frac{1}{2}}  + q^{2n-\mathcal{D}+\varepsilon}X^{-\frac{t_0(m, \boldsymbol{f}^{\ast})}{8s_{1}(m, \boldsymbol{f}^{\ast})}} |\mathfrak{n}|^{\frac{1}{2}} \\
            & \ + q^{2n-\frac{3}{2}\mathcal{D} +\varepsilon-\frac{1}{4}\min_{d \in \Delta}{t_d(m+s, \boldsymbol{g}^{\ast})} } X^{R+\frac{1}{2}}|\mathfrak{n}|^{\frac{1}{4}} \\
            & \ + q^{2n-\frac{3}{2}\mathcal{D}+\varepsilon}X^{R+\frac{1}{2}-\frac{t_0(m+s, \boldsymbol{g}^{\ast})}{16s_{1}(m+s, \boldsymbol{g}^{\ast})}} |\mathfrak{n}|^{\frac{1}{4}} \\
            & \  +  q^{m+s-2\mathcal{D}+\varepsilon }X^{2R+1} \sup(\mathcal{E}),
        \end{split}
    \end{equation}
    where 
    \begin{align*}\sup(\mathcal{E})=\sup_{\boldsymbol{\alpha}\in \mathfrak{n}}\sup_{\boldsymbol{y}}\sup_{\boldsymbol{z}}|\mathcal{E}^{\ast}_{\boldsymbol{y},\boldsymbol{z}}(\boldsymbol{\alpha})|.\end{align*}
    Recall the proof of Lemma \ref{lem/min/arcs/est}, formulae \eqref{def A1n}, \eqref{def A2n}, \eqref{def iota1} and \eqref{def iota2}.
    And note that, for $\mathfrak{n}$ as in \eqref{def n}, we have $|\mathfrak{n}|\ll q^R\cdot (\frac{2}{q^2})^R=\frac{2^R}{q^R}$ which does not depend on $\varpi$, 
    and so the choices of $\iota^{\ast}_1$ and $\iota^{\ast}_2$ are more than enough. Choose 
    $$
    X=(q^{\varpi})^{\frac{1}{2^{D+1}2R(2R+1)}}.
    $$
By the specification of $\varpi$ in \eqref{def/pi} and $m-\dim V^{\ast}_{\boldsymbol{f}^{\ast}}(m)\ge \iota^{\ast}_1$, we get
    $$-\frac{1}{2}\min_{d \in \Delta}{t_d(m, \boldsymbol{f}^{\ast})}<-\bigg(\frac{R}{2}+2\bigg)$$
    and
    $$-\frac{t_0(m, \boldsymbol{f}^{\ast})}{2^{D+4}2R(2R+1)s_{1}(m, \boldsymbol{f}^{\ast})}\varpi<-\bigg(\frac{R}{2}+2\bigg).$$
Also by $m+s-\dim V^{\ast}_{\boldsymbol{g}^{\ast}}(m+s)\ge \iota^{\ast}_2$, we obtain
    $$\frac{1}{2^{D+2}2R}\varpi-\frac{1}{4}\min_{d \in \Delta}{t_d(m+s, \boldsymbol{g}^{\ast})}<-\frac{3}{2}$$
    and
    $$\frac{1}{2^{D+2}2R}\varpi-\frac{t_0(m+s, \boldsymbol{g}^{\ast})}{2^{D+5}2R(2R+1)s_{1}(m+s, \boldsymbol{g}^{\ast})}\varpi<-\frac{3}{2}.$$
    Thus we deduce from $\sup(\mathcal{E})\ll q^{t+\varepsilon}$, $m+s+t=2n$, $|\mathfrak{n}|\ll {2^R}/{q^{R}}$ and $\mathcal{D}\ge 2R+1$ that the worst term on the right-hand side of \eqref{bound SF*1} is the first one, i.e.,
    \begin{equation}\label{bound SF*2}
       \int_{\mathfrak{n}}S_{\boldsymbol{F}^{\ast}}(\boldsymbol{\alpha})d\boldsymbol{\alpha} \ll q^{2n-\mathcal{D}-\frac{R}{2}-2} |\mathfrak{n}|^{\frac{1}{2}} \ll q^{2n-\mathcal{D}-R-2}.
    \end{equation}
    Note that, by \eqref{write in critical form}, \eqref{def n} and \eqref{def SF*}, we have
    \begin{equation}\label{relation}
       \int_{\mathfrak{n}}S_{\boldsymbol{F}^{\ast}}(\boldsymbol{\alpha})d\boldsymbol{\alpha}
       =\bigg(\frac{2}{q^{2}} \bigg)^R\cdot \sideset{}{^{\dagger}}\sum_{\boldsymbol{a} \bmod q} \bigg|\sum_{\boldsymbol{h} \bmod q}b(\boldsymbol{h})e\bigg(\frac{\boldsymbol{a}\cdot\boldsymbol{F}(\boldsymbol{h})}{q}\bigg)\bigg|^2.
    \end{equation}
    We now remark that the choices of each radius of $\mathfrak{n}(\boldsymbol{a})$, $1/q^2$, are optimal: 
they not only guarantee that any two intervals are disjoint for all $q$, but also prevent the first factor in \eqref{relation} from being too small.
    Hence we conclude from \eqref{app Cauchy}, \eqref{bound SF*2}, \eqref{relation} and $\mathcal{D}\ge 2R+1$ that
    \begin{equation*}
        \sideset{}{^{\dagger}}\sum_{\boldsymbol{a} \bmod q} \bigg|\sum_{\boldsymbol{h} \bmod q}b(\boldsymbol{h})e\bigg(\frac{\boldsymbol{a}\cdot\boldsymbol{F}(\boldsymbol{h})}{q}\bigg)\bigg| \ll q^{n-\frac{1}{2}\mathcal{D}+\frac{R}{2}-1+\frac{R}{2}} \ll
        q^{n-\frac{1}{2}\mathcal{D}-1+R} \ll q^{n-\frac{3}{2}}.
    \end{equation*}
    We complete the proof.
\end{proof}

\subsection{The singular series and singular integral}   
The local density of \eqref{equationF=0} at the place $p$ is 
\begin{equation}\label{eq:local-p} 
    \mathfrak{S}_p=\lim_{k \to \infty}\frac{p^{Rk}}{\varphi(p^k)^n}\mathcal{N}(p^k),
\end{equation}
where
$$
\mathcal{N}(q)=\# \{\boldsymbol{x}\in ((\mathbb{Z} /q\mathbb{Z} )^\ast)^n : 
F_{i,d}(\boldsymbol{x})\equiv \boldsymbol{0} \bmod q, \forall i,d  \}
$$
and $\varphi(\cdot) $ is the Euler totient function. We put 
\begin{align}\label{define gauss sum}
C(q,\boldsymbol{a}) 
:=
C_{\boldsymbol{F}} (q,\boldsymbol{a})=
\sideset{}{^\ast}\sum_{\boldsymbol{h} \bmod q}
e\bigg(
\frac{\boldsymbol{a}\cdot \boldsymbol{F}(\boldsymbol{h})}{q}\bigg)
\end{align}
where $\boldsymbol{h}\in {\mathbb Z}^n$ and $\boldsymbol{a} \in {\mathbb Z}^R$, and write 
\begin{align*} 
B(q)=\sideset{}{^{\dagger}}\sum_{\boldsymbol{a} \bmod q}  C(q,\boldsymbol{a}). 
\end{align*}
Define 
\begin{align}\label{define SH}
\mathfrak{S}_{\boldsymbol{F}}(H)=\sum_{q \le H}\frac{1}{\varphi (q)^n} B(q). 
\end{align}
It follows from Lemma \ref{lem/bound/gauss} with all the characters trivial and $k_0=1$ that, 
if $n\ge D^24^{D+6}R^5$ then as $H\to \infty$ the above $\mathfrak{S}_{\boldsymbol{F}}(H)$ is absolutely convergent 
to $\mathfrak{S}_{\boldsymbol{F}}$, say, and  
\begin{equation}\label{eq:local-inf}
|\mathfrak{S}_{\boldsymbol{F}}(H)-
\mathfrak{S}_{\boldsymbol{F}}|  
\ll H^{-\frac{1}{2}+\varepsilon}. 
\end{equation}
It is worth mentioning that the convergence of the singular series requires much less variables, like \eqref{conv/cond}.
For this $\mathfrak{S}_{\boldsymbol{F}}$, we have 
\begin{align}\label{S=prod sigma p}
\mathfrak{S}_{\boldsymbol{F}}=\prod_p\mathfrak{S}_p,
\end{align}
where $\mathfrak{S}_p $ is the local density defined in \eqref{eq:local-p}. 
We remark that $\mathfrak{S}_{\boldsymbol{F}}>0$ if condition (i) of Theorem \ref{Thm1} is satisfied.

We define 
\begin{align*}
\mathfrak{I}_{\boldsymbol{F}}(H)=\int_{|\boldsymbol{\theta}|\le H}\upsilon(\boldsymbol{\theta})d\boldsymbol{\theta},  \quad  \upsilon(\boldsymbol{\theta})=\int_{\mathfrak{B}}e\big(\boldsymbol{\theta}\cdot \boldsymbol{F}(\boldsymbol{x})\big)d\boldsymbol{x}. 
\end{align*}
Recall that \eqref{admissible} for $d=0$ is
\begin{align}\label{conv/cond}
s_{1}(n, \boldsymbol{F})+\sum_{j=1}^D s_j(n, \boldsymbol{F})r_j<1. 
\end{align} 
By \cite[Lemma 8.3]{BroHB} as well as the discussion after it, if $n$ satisfies \eqref{conv/cond} then as $H\to \infty$ the above $\mathfrak{I}_{\boldsymbol{F}}(H)$ is absolutely convergent 
to $\mathfrak{I}_{\boldsymbol{F}}$, say, and  
\begin{equation}\label{Int/Par/Inf}
|\mathfrak{I}_{\boldsymbol{F}}-\mathfrak{I}_{\boldsymbol{F}}(H)|\ll H^{-1}.
\end{equation}
The assumption $n\ge D^24^{D+6}R^5$ implies \eqref{conv/cond}, and consequently yields the inequality \eqref{Int/Par/Inf}. 
More precisely,
\begin{equation}\label{Def/Sin/Int}
    \mathfrak{I}_{\boldsymbol{F}}=\int^{+\infty}_{-\infty} \int_{\mathfrak{B}}e\big(\boldsymbol{\theta}\cdot \boldsymbol{F}(\boldsymbol{x})\big)d\boldsymbol{x} d\boldsymbol{\theta} 
\end{equation}
is the local density of \eqref{equationF=0} at $\infty$.  
We also remark that $\mathfrak{I}_{\boldsymbol{F}}>0$ if $\mathfrak{B}$ contains the real point $x_0$ in (ii) of Theorem \ref{Thm1}. 

In addition if we insert a continuously differentiable function $\Phi (\boldsymbol{x})$ to the integrand of $\upsilon(\boldsymbol{\theta})$, i.e.,
\begin{align*}
   \int_{\mathfrak{B}}e\big(\boldsymbol{\theta}\cdot \boldsymbol{F}(\boldsymbol{x})\big)\Phi (\boldsymbol{x})d\boldsymbol{x},
\end{align*}
then the limit 
\begin{align}\label{int with factor}
    \lim_{H\rightarrow +\infty}\int_{|\boldsymbol{\theta}|\le H}\int_{\mathfrak{B}}e\big(\boldsymbol{\theta}\cdot \boldsymbol{F}(\boldsymbol{x})\big)\Phi (\boldsymbol{x})d\boldsymbol{x}d\boldsymbol{\theta} 
\end{align}
still exists. 
We omit its proof, and one can see \cite[Lemma 7.2]{Liu11} for reference.

\section{Contribution from the major arcs and proof of Theorem \ref{Thm2}} 

In \S\S9.1-9.4, we prove the following Lemma \ref{lem/maj/con} for contribution on the major arcs, 
from which we deduce Theorem \ref{Thm2} in \S9.5. 

\begin{lemma}\label{lem/maj/con}
    Let $\boldsymbol{F}=(F_1,\ldots,F_R)=(F_{i,d})_{\substack{d\in \Delta \\ 1\le i\le r_d }}$ with each $F_{i,d}$ being a form of degree $d$ in $n$ variables and $\mathfrak{M}=\mathfrak{M}(Q)$ be as in \eqref{define MQ}.
    Let $S_{\boldsymbol{F}}(\boldsymbol{\alpha})$ be as in \eqref{def/SFa}.
    Let $Q=P^{\varpi}$ with \eqref{def/pi} and $$n\ge D^24^{D+6}R^5.$$
    Then 
    \begin{align*}
        \int_{\mathfrak{M}}S_{\boldsymbol{F}}(\boldsymbol{\alpha})d\boldsymbol{\alpha}=
    \mathfrak{S}_{\boldsymbol{F}} \mathfrak{I}_{\boldsymbol{F}}P^{n-\mathcal{D} }+O(P^{n-\mathcal{D}}(\log P)^{-A}),
    \end{align*}
    where $\mathcal{D}$ is as in \eqref{def/R/calD} and $A>0$ is an any fixed constant.
\end{lemma}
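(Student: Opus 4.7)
The plan is to carry out a major-arc analysis that extracts the main term $\mathfrak{S}_{\boldsymbol{F}}\mathfrak{I}_{\boldsymbol{F}}P^{n-\mathcal{D}}$ while bounding every remaining piece by $O(P^{n-\mathcal{D}}(\log P)^{-A})$ uniformly over $\mathfrak{M}(Q)$. For $\boldsymbol{\alpha}=\boldsymbol{a}/q+\boldsymbol{\beta}\in\mathfrak{M}(q,\boldsymbol{a};Q)$, I first split $\boldsymbol{x}$ into residue classes $\boldsymbol{x}\equiv\boldsymbol{h}\pmod q$, obtaining
$$S_{\boldsymbol{F}}(\boldsymbol{\alpha})=\sideset{}{^\ast}\sum_{\boldsymbol{h}\bmod q}e\!\left(\frac{\boldsymbol{a}\cdot\boldsymbol{F}(\boldsymbol{h})}{q}\right)\sum_{\substack{\boldsymbol{x}\in P\mathfrak{B}\\ \boldsymbol{x}\equiv\boldsymbol{h}\,(q)}}\Lambda(\boldsymbol{x})\,e(\boldsymbol{\beta}\cdot\boldsymbol{F}(\boldsymbol{x}))+E_1,$$
with $E_1$ absorbing the contribution from tuples having some $(h_i,q)>1$, controlled directly by Lemma~\ref{lem/bound/gauss}. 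The inner prime-weighted sum I detect via Dirichlet characters $\chi_1,\ldots,\chi_n$ modulo $q$, inducing primitive $\chi_j^{\ast}$ of conductor $k_j$, and set $k_0=[k_1,\ldots,k_n]$.

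The all-principal-character piece, after partial summation in $\boldsymbol{\beta}$ and summation over $\boldsymbol{a}$ and $q\le Q$, yields the truncated candidate $\mathfrak{S}_{\boldsymbol{F}}(Q)\,\mathfrak{I}_{\boldsymbol{F}}(Q)\,P^{n-\mathcal{D}}$; this is promoted to $\mathfrak{S}_{\boldsymbol{F}}\mathfrak{I}_{\boldsymbol{F}}P^{n-\mathcal{D}}$ with error $O(P^{n-\mathcal{D}}Q^{-1/2+\varepsilon})$ by invoking \eqref{eq:local-inf} and \eqref{Int/Par/Inf}, both of which are legitimate under the hypothesis $n\ge D^2 4^{D+6}R^5$. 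The genuinely delicate contribution comes from tuples in which at least one $\chi_j$ is non-principal. For $q\le(\log P)^B$ it is handled cheaply by Siegel--Walfisz, but on the range $(\log P)^B<q\le Q=P^{\varpi}$ Siegel--Walfisz is unavailable. Following the strategy developed for diagonal single equations in \cite{LiuZha98,Liu03,Liu12}, I would expand $\Lambda$ by the explicit formula for each $L(s,\chi_j^{\ast})$, reducing matters to sums over non-trivial zeros, which are in turn controlled via Huxley-type zero-density estimates together with Chudakov's zero-free region. After this analytic input one is left with averaging the character-twisted Gauss sum
$$\sum_{\substack{q\le Q\\ k_0\mid q}}\frac{1}{\varphi(q)^n}\sideset{}{^{\dagger}}\sum_{\boldsymbol{a}\bmod q}\bigg|\,\sideset{}{^\ast}\sum_{\boldsymbol{h}\bmod q}\bar{\chi}_1\chi^0(h_1)\cdots\bar{\chi}_n\chi^0(h_n)\,e\!\left(\frac{\boldsymbol{a}\cdot\boldsymbol{F}(\boldsymbol{h})}{q}\right)\bigg|,$$
which is precisely the average that Lemma~\ref{lem/bound/gauss} bounds by $k_0^{-3/2+\varepsilon}$.

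The hardest part will be calibrating the zero-density and zero-free-region inputs with the $k_0^{-3/2+\varepsilon}$ saving so that the cumulative loss, summed over all primitive conductors $k_j\mid q\le P^{\varpi}$ and over the zero data, is still $\ll(\log P)^{-A}$ for arbitrary $A>0$. Here the sharper exponent $-3/2$ (not merely $-1$) in Lemma~\ref{lem/bound/gauss} is essential: it is the structural reason the method tolerates the enlargement $Q=P^{\varpi}$, and the specific choice \eqref{def/pi} of $\varpi$ is precisely what balances the unavoidable losses from the zero-density estimates against this gain. Once the non-principal-character contribution is established to be $\ll P^{n-\mathcal{D}}(\log P)^{-A}$, combining it with the principal-character main term and invoking \eqref{eq:local-inf}, \eqref{Int/Par/Inf} produces the asymptotic stated in Lemma~\ref{lem/maj/con}.
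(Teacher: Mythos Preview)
Your proposal is correct and follows essentially the same route as the paper: decompose via Dirichlet characters, extract the main term from the all-principal piece, and control the non-principal contribution by combining Lemma~\ref{lem/bound/gauss} with the explicit formula, large-sieve zero-density estimates, and Chudakov's zero-free region. Two small refinements worth noting when you write out details: the paper first enlarges $\mathfrak{M}$ to arcs $\mathfrak{N}$ with $|\beta_{i,d}|\le Q/P^d$ (independent of $q$, the difference being absorbed by Remark~\ref{pre for ext}) so that the singular series and singular integral separate cleanly, and the dichotomy is taken according to whether some conductor $k_j$ exceeds $(\log P)^{4B}$ rather than according to $q$, since for large $q$ with all $k_j$ small the saving must come from Chudakov's zero-free region rather than from the factor $k_0^{-3/2+\varepsilon}$.
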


\subsection{Expression for contribution from the major arcs}  
We start from the single sum over $x\in \mathcal{B}_1(P)$ where $\mathcal{B}_1(P)$ is as in \S4, to get 
\begin{equation}\label{one variable representation}
   \sum_{x\in \mathcal{B}_1(P)}\Lambda(x)e(\boldsymbol{\alpha}\cdot \boldsymbol{F}(x, \ldots))=\sum_{\substack{x\in \mathcal{B}_1(P)\\ (x,q)=1}}\Lambda(x)e(\boldsymbol{\alpha}\cdot \boldsymbol{F}(x, \ldots))+ O((\log qP)^2).
\end{equation}
Since $\boldsymbol{\alpha}=\frac{\boldsymbol{a}}{q}+\boldsymbol{\theta}$, the sum on the right is
\begin{equation*}
    \begin{split}
       & \ \ \sum_{\substack{1\le h\le q \\ (h,q)=1}}
       e\bigg(\sum_{d \in \Delta }\sum_{1\le i\le r_d}\frac{a_{i,d}F_{i,d}(h, \ldots)}{q}\bigg)\sum_{\substack{x\in \mathcal{B}_1(P) \\ x \equiv h\bmod q}}\Lambda(x)e\bigg(\sum_{d \in \Delta }\sum_{1\le i\le r_d}\theta_{i,d}F_{i,d}(x, \ldots)\bigg) \\
       &=\frac{1}{\varphi (q)}\sum_{\chi \bmod q} \, \sideset{}{^\ast}\sum_{h \bmod q}\bar{\chi}(h)
       e\bigg(\frac{\boldsymbol{a}\cdot\boldsymbol{F}(h, \ldots)}{q}\bigg)\sum_{x\in \mathcal{B}_1(P) }\Lambda(x)\chi(x)e(\boldsymbol{\theta }  \cdot\boldsymbol{F}(x, \ldots)).
    \end{split}
\end{equation*}
Here the $\ast$ on the summation over $h$ means that $h$ just runs through the reduced residue classes of $q$.
Then we reduce all the characters modulo $q$ to primitive characters. 
Recalling that if a character $\chi \bmod q$ is induced by a primitive character $\chi^* \bmod k$, then $k \vert q$ and $\chi = \chi^* \chi^0$, where $\chi^0$ is the principal character modulo $q$.
Hence the above is
$$
\frac{1}{\varphi (q)} \sum_{k \vert q}\sideset{}{^\ast}\sum_{\chi \bmod k} \, \sideset{}{^\ast}\sum_{h \bmod q}\bar{\chi}\chi^0(h) e\bigg(\frac{\boldsymbol{a}\cdot\boldsymbol{F}(h, \ldots)}{q}\bigg)\sum_{x\in \mathcal{B}_1(P) }\Lambda(x)\chi\chi^0(x)e(\boldsymbol{\theta }  \cdot\boldsymbol{F}(x, \ldots)),
$$
where the summation over $\chi$ means that $\chi$ goes through primitive characters modulo $k$.
Furthermore the above innermost sum over $x$ can be decomposed as
$$
\sum_{x\in \mathcal{B}_1(P)}\delta^+(\chi\chi^0)e(\boldsymbol{\theta }  \cdot\boldsymbol{F}(x, \ldots))+\sum_{x\in \mathcal{B}_1(P)}\delta^-(x, \chi\chi^0)e(\boldsymbol{\theta }  \cdot\boldsymbol{F}(x, \ldots)),  
$$
where
$$
\delta^+(\chi\chi^0)=
\begin{cases}
   1, &\mbox{$\chi\chi^0 \bmod q$ is principal,} \\
   0, &\mbox{otherwise,}
\end{cases}
$$
and
\begin{equation}\label{define delta-}
    \delta^-(x,\chi\chi^0)=\Lambda(x)\chi\chi^0(x)-\delta^+(\chi\chi^0).
\end{equation}
From now on, we will abbreviate $\delta^+(\chi\chi^0)$ and $\delta^-(x,\chi\chi^0)$ by, respectively, $\delta^+$ and $\delta^-$, when their variables are clear.
Inserting these into \eqref{one variable representation}, we see that
\begin{equation}\label{Eq9/3/}
    \begin{split}
        &\sum_{x\in \mathcal{B}_1(P)}\Lambda(x)e(\boldsymbol{\alpha}\cdot \boldsymbol{F}(x, \ldots)) \\
        & =\frac{1}{\varphi (q)} \sum_{k \vert q}\sideset{}{^\ast}\sum_{\chi \bmod k} \, \sideset{}{^\ast}\sum_{h \bmod q}\bar{\chi}\chi^0(h)e\bigg(\frac{\boldsymbol{a}\cdot\boldsymbol{F}(h, \ldots)}{q}\bigg)\sum_{x\in \mathcal{B}_1(P) }(\delta^+ +\delta^-)e(\boldsymbol{\theta }  \cdot\boldsymbol{F}(x, \ldots)) \\ 
        & \ \ +O((\log P)^2). 
    \end{split}
\end{equation}

To simplify our subsequent treatment, we expand each major arc $ \mathfrak{M}(q,\boldsymbol{a};Q)$ slightly to 
$$
\mathfrak{N}(q,\boldsymbol{a};Q)
=\bigg\{(\alpha_{i,d})_{\substack{d \in \Delta \\1\le i\le r_d}}\in \R^R:\ \bigg|\alpha_{i,d}-\frac{a_{i,d}}{q}\bigg|\le \frac{Q}{P^d} \ \textrm{ for all } i,d \bigg\}
$$
and denote by $\mathfrak{N}$ the union of these $\mathfrak{N}(q,\boldsymbol{a};Q)$, similarly to \eqref{define MQ}. 
Therefore, by $\mathfrak{N}\setminus \mathfrak{M} \subset \mathfrak{m}$ and Remark \ref{pre for ext}, we get
\begin{equation*}
    \int_{\mathfrak{N}\setminus \mathfrak{M}}S_{\boldsymbol{F}}(\boldsymbol{\alpha})d\boldsymbol{\alpha} \ll P^{n-\mathcal{D}}Q^{-\delta},
\end{equation*} 
and it follows from $Q=P^{\varpi}$ and $\varpi>0$ that
\begin{equation}\label{M to N}
    \int_{\mathfrak{M}}S_{\boldsymbol{F}}(\boldsymbol{\alpha})d\boldsymbol{\alpha}=\int_{ \mathfrak{N}}S_{\boldsymbol{F}} (\boldsymbol{\alpha})d\boldsymbol{\alpha}+O(P^{n-\mathcal{D}-\delta}).
\end{equation}
We will find that the integral on $\mathfrak{N}$ is easier to calculate, since the singular integral now does not involve $q$ 
in the limits of integration, and therefore can be separated from the singular series.

Applying \eqref{Eq9/3/} repeatedly with $x_1\in \mathcal{B}_1(P), \ldots, x_n\in \mathcal{B}_1(P)$, 
we have 
\begin{equation}\label{separate into M E}
\int_{ \mathfrak{N}}S_{\boldsymbol{F}}(\boldsymbol{\alpha})d\boldsymbol{\alpha}=M+E,
\end{equation}
where $E$ is the contribution from the error term $O((\log P)^2)$ in \eqref{Eq9/3/} for each $x_j\in \mathcal{B}_1(P)$, 
and 
\begin{equation}\label{define major term}
    \begin{split}
        M
        &=\sum_{q \le Q}\frac{1}{\varphi^n(q)}\sideset{}{^{\dagger}}\sum_{\boldsymbol{a} \bmod q}\sum_{k_1 |q}\cdots \sum_{k_n |q} 
        \, \sideset{}{^\ast}\sum_{\chi_1 \bmod k_1}\cdots \sideset{}{^\ast}\sum_{\chi_n \bmod k_n} \, \sideset{}{^\ast} \sum_{\boldsymbol{h} \bmod q}\bar{\chi}_1\chi^0(h_1)\cdots \bar{\chi}_n \chi^0(h_n) \\
        & \ \ \times e\bigg(\frac{\boldsymbol{a}\cdot\boldsymbol{F}(\boldsymbol{h})}{q}\bigg) 
        \sum_{\pm, \ldots, \pm } \int  \sum_{ \boldsymbol{x}\in P\mathfrak{B} }\delta_1^{\pm } \cdots \delta_n^{\pm }e(\boldsymbol{\theta }  \cdot\boldsymbol{F}(\boldsymbol{x})) d\boldsymbol{\theta}.
    \end{split}
\end{equation}
Here the integration interval for each coordinate $\theta_{i,d}$ in $\boldsymbol{\theta}$ is $|\theta_{i,d}| \le \frac{Q}{P^d}$ 
where $d \in \Delta$ and $1 \le i \le r_d$, and also we have used the abbreviations that 
$$
\delta^+_j=\delta^+_j(\chi_j\chi^0), \ \ \delta^-_j=\delta^-_j(x_j, \chi_j\chi^0)
$$
for $j=1, \ldots, n$. 

First, $E$ is easy to estimate. In fact 
\begin{align*}
E 
&\ll P^{n-1}(\log P)^c |\mathfrak{N}| \ll P^{n-1}Q^{R+1}\prod_{d \in \Delta}\bigg(\frac{Q}{P^d}\bigg)^{r_d}(\log P)^c \\
&\ll P^{n-\mathcal{D}-1}Q^{2R+1}(\log P)^c.
\end{align*} 
It follows from $Q=P^{\varpi}$ with \eqref{def/pi} that 
\begin{equation}\label{bound E}
E \ll P^{n-\mathcal{D}-\delta}, 
\end{equation}
which is clearly acceptable. 

Next, we turn to $M$. For $j=0, \ldots, n$, denote by $M_j$ the contribution of the product 
$\delta^{\pm }_1 \cdots \delta^{\pm }_n$ where exactly $j$ minus signs occur, so 
that \eqref{define major term} becomes $M = M_0 + M_1 + \cdots + M_n $ with
\begin{equation}\label{separate M into Mj}
    \begin{split}
        M_j
        &=\sum_{k_1 \le Q}\cdots \sum_{k_n \le Q} \, \sideset{}{^\ast}\sum_{\chi_1 \bmod k_1}\cdots \sideset{}{^\ast}\sum_{\chi_n \bmod k_n} \, \sum_{\substack{q \le Q \\ k_0|q}}\frac{1}{\varphi^n(q)}\sideset{}{^{\dagger}}\sum_{\boldsymbol{a} \bmod q} \, \sideset{}{^\ast}\sum_{\boldsymbol{h} \bmod q}\bar{\chi}_1\chi^0(h_1)\cdots \bar{\chi}_n \chi^0(h_n) \\ 
        & \times e\bigg(\frac{\boldsymbol{a}\cdot\boldsymbol{F}(\boldsymbol{h})}{q}\bigg) \sum_{[j-] } \int  \sum_{ \boldsymbol{x}\in P\mathfrak{B} }\delta_1^{\pm } \cdots \delta_n^{\pm }e(\boldsymbol{\theta }  \cdot\boldsymbol{F}(\boldsymbol{x})) d\boldsymbol{\theta} 
    \end{split}
\end{equation}
for $ j = 0, 1, \dots , n$, where $[j -]$ goes through all subsets of $\{\pm , \ldots , \pm\}$ with exactly $j$ minus signs and $k_0=[k_1, \ldots   , k_n]$.
Therefore, by \eqref{M to N}, \eqref{separate into M E}, \eqref{bound E} and \eqref{separate M into Mj}, we have 
\begin{equation}\label{separate the integral on the major arcs}
    \int_{\mathfrak{M}}S_{\boldsymbol{F}}(\boldsymbol{\alpha})d\boldsymbol{\alpha}=M_0+M_1+ \cdots +M_n+O(P^{n-\mathcal{D}-\delta}). 
\end{equation}

\subsection{Estimation of $M_0$}  
Furthermore, we single out $M_0$ which comes from the product $\delta^+_1 \cdots \delta^+_n $, and we will see that 
$M_0$ gives the main term. 
After this, we will show that other terms $M_1, \ldots , M_n$ are negligible.

\begin{lemma}\label{lem/comp/M0}
    Let $Q$ and $n$ be as in Lemma \ref{lem/maj/con}.
    Then
    \begin{equation*}
        M_0=\mathfrak{S}_{\boldsymbol{F}} \mathfrak{I}_{\boldsymbol{F}} P^{n-\mathcal{D}}+O(P^{n-\mathcal{D}-\delta}),
    \end{equation*}
    where $\delta>0$ is a fixed constant.
\end{lemma}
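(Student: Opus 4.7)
The plan is to identify the conditions under which $\delta_j^+(\chi_j\chi^0) \ne 0$, collapse the character sums accordingly, and then separate $M_0$ into a product of a truncated singular series with a truncated singular integral, before applying the convergence estimates \eqref{eq:local-inf} and \eqref{Int/Par/Inf}.

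First I would observe that $\delta_j^+(\chi_j\chi^0) = 1$ holds exactly when $\chi_j\chi^0$ is the principal character modulo $q$. Since $\chi_j$ is primitive modulo $k_j$ with $k_j \mid q$, this forces $\chi_j$ to be the (primitive) trivial character, i.e.\ $k_j = 1$ for every $j$. In that case $\bar{\chi}_j\chi^0(h_j) = \chi^0(h_j)$ is simply the indicator of $(h_j,q)=1$, which is already built into the starred summation over $\boldsymbol{h}$. Consequently all the sums over $k_1,\ldots,k_n$ and $\chi_1,\ldots,\chi_n$ in \eqref{separate M into Mj} with $j=0$ collapse to the single term with $k_j=1$, $\chi_j$ trivial, and \eqref{separate M into Mj} reduces to
\begin{equation*}
M_0 = \bigg(\sum_{q\le Q}\frac{B(q)}{\varphi^n(q)}\bigg) \cdot I(Q) = \mathfrak{S}_{\boldsymbol{F}}(Q) \cdot I(Q),
\end{equation*}
where, using the notation of \S8,
\begin{equation*}
I(Q) = \int_{|\theta_{i,d}|\le Q/P^d}\sum_{\boldsymbol{x}\in P\mathfrak{B}} e(\boldsymbol{\theta}\cdot\boldsymbol{F}(\boldsymbol{x}))\,d\boldsymbol{\theta}.
\end{equation*}

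Next I would approximate the exponential sum by the corresponding integral. On the box $|\theta_{i,d}|\le Q/P^d$, the partial derivatives of $\boldsymbol{\theta}\cdot\boldsymbol{F}(\boldsymbol{x})$ in $\boldsymbol{x}$ are bounded by $\ll Q/P$ on $P\mathfrak{B}$, using homogeneity $F_{i,d}(\boldsymbol{x}) \ll P^d$ and $\nabla F_{i,d}(\boldsymbol{x}) \ll P^{d-1}$. Since $Q = P^{\varpi}$ with $\varpi=1/(4(R+1))$, standard partial summation (applied coordinate by coordinate as in \cite[Lemma 8.1]{BroHB}) yields
\begin{equation*}
\sum_{\boldsymbol{x}\in P\mathfrak{B}} e(\boldsymbol{\theta}\cdot\boldsymbol{F}(\boldsymbol{x})) = \int_{P\mathfrak{B}} e(\boldsymbol{\theta}\cdot\boldsymbol{F}(\boldsymbol{x}))\,d\boldsymbol{x} + O\big(P^{n-1}Q\big).
\end{equation*}
After the change of variables $\boldsymbol{x} = P\boldsymbol{u}$ and $\eta_{i,d} = P^d\theta_{i,d}$, homogeneity of $F_{i,d}$ gives
\begin{equation*}
\int_{|\theta_{i,d}|\le Q/P^d}\int_{P\mathfrak{B}} e(\boldsymbol{\theta}\cdot\boldsymbol{F}(\boldsymbol{x}))\,d\boldsymbol{x}\,d\boldsymbol{\theta} = P^{n-\mathcal{D}}\,\mathfrak{I}_{\boldsymbol{F}}(Q).
\end{equation*}
The resulting contribution of the error $O(P^{n-1}Q)$ to $I(Q)$ is $\ll P^{n-1}Q \cdot Q^R\prod_d (Q/P^d)^{r_d} = P^{n-\mathcal{D}-1}Q^{R+2}$, which is acceptable thanks to the small value of $\varpi$.

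Assembling these pieces, I obtain
\begin{equation*}
M_0 = \mathfrak{S}_{\boldsymbol{F}}(Q)\,\mathfrak{I}_{\boldsymbol{F}}(Q)\,P^{n-\mathcal{D}} + O(P^{n-\mathcal{D}-\delta_1})
\end{equation*}
for some $\delta_1>0$. Writing $\mathfrak{S}_{\boldsymbol{F}} = \mathfrak{S}_{\boldsymbol{F}}(Q) + O(Q^{-1/2+\varepsilon})$ via \eqref{eq:local-inf} (applicable by the assumption $n\ge D^2 4^{D+6}R^5$) and $\mathfrak{I}_{\boldsymbol{F}} = \mathfrak{I}_{\boldsymbol{F}}(Q) + O(Q^{-1})$ via \eqref{Int/Par/Inf}, together with the trivial boundedness of $\mathfrak{I}_{\boldsymbol{F}}(Q)$ and $\mathfrak{S}_{\boldsymbol{F}}(Q)$, gives
\begin{equation*}
M_0 = \mathfrak{S}_{\boldsymbol{F}}\mathfrak{I}_{\boldsymbol{F}} P^{n-\mathcal{D}} + O(P^{n-\mathcal{D}}Q^{-1/2+\varepsilon}),
\end{equation*}
which is the desired conclusion with any $\delta < \varpi/2$.

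The only delicate point is the sum-to-integral comparison, since the integration region for $\boldsymbol{\theta}$ has been enlarged considerably (to $Q/P^d$ rather than the classical $(\log P)^B/P^d$). The choice $\varpi = 1/(4(R+1))$ keeps the oscillation modest enough that partial summation costs only a factor of $Q^{R+2}$, which is comfortably absorbed by the saving $P^{-1}$ per coordinate; this is the step that most strongly constrains how large $\varpi$ may be.
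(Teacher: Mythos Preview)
Your proof is correct and follows essentially the same route as the paper: collapse the character sums via $\delta_j^+=1 \Leftrightarrow k_j=1$, replace the sum over $\boldsymbol{x}$ by the corresponding integral, rescale to obtain $\mathfrak{S}_{\boldsymbol{F}}(Q)\,\mathfrak{I}_{\boldsymbol{F}}(Q)\,P^{n-\mathcal{D}}$, and then invoke \eqref{eq:local-inf} and \eqref{Int/Par/Inf}. The only organisational difference is that you factor $M_0=\mathfrak{S}_{\boldsymbol{F}}(Q)\cdot I(Q)$ first and perform the sum-to-integral comparison once, whereas the paper keeps the $q$-sum inside and bounds the resulting error by $P^{n-\mathcal{D}-1}Q^{2R+2+\varepsilon}$; your cleaner bookkeeping yields the slightly better error $P^{n-\mathcal{D}-1}Q^{R+1}$ (your displayed $Q^{R+2}$ has a harmless extra factor of $Q$), but either bound is comfortably absorbed by $\varpi=1/(4(R+1))$.
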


\begin{proof}
By \eqref{separate M into Mj}, $M_0$ denotes the contribution from the term $\delta^+_1 \cdots \delta^+_n $.
Recall that  $\delta^+_j=\delta^+_j(\chi_j\chi^0)$, which is equal to $1$ if $\chi_j  \bmod k_j$ is the principal 
character modulo $1$, and equal to $0$ otherwise. We have
    \begin{equation*}
        \begin{split}
            M_0
            &=\sum_{\substack{q \le Q }}\frac{1}{\varphi^n(q)}\sideset{}{^{\dagger}}\sum_{\boldsymbol{a} \bmod q} \, \sideset{}{^\ast}\sum_{\boldsymbol{h} \bmod q}e\bigg(\frac{\boldsymbol{a}\cdot\boldsymbol{F}(\boldsymbol{h})}{q}\bigg)  \int  \sum_{ \boldsymbol{x}\in P\mathfrak{B} }e(\boldsymbol{\theta }  \cdot\boldsymbol{F}(\boldsymbol{x})) d\boldsymbol{\theta} \\
            &=\sum_{\substack{q \le Q }}\frac{1}{\varphi^n(q)}\sideset{}{^{\dagger}}\sum_{\boldsymbol{a} \bmod q} \, \sideset{}{^\ast}\sum_{\boldsymbol{h} \bmod q}e\bigg(\frac{\boldsymbol{a}\cdot\boldsymbol{F}(\boldsymbol{h})}{q}\bigg)  \int  \int_{ P\mathfrak{B} }e(\boldsymbol{\theta }  \cdot\boldsymbol{F}(\boldsymbol{x})) d\boldsymbol{x} d\boldsymbol{\theta} \\
            & \ \ +O\bigg(\sum_{\substack{q \le Q }}\frac{1}{\varphi^n(q)}q^Rq^n\prod_{d \in \Delta}\bigg(\frac{Q}{P^d}\bigg)^{r_d}QP^{n-1}\bigg). 
        \end{split}
    \end{equation*}
Here the partial singular series and the partial singular integral are already separated, and therefore,  
by \eqref{define SH} and \eqref{Def/Sin/Int}, we may write
    $$
    M_0=\mathfrak{S}_{\boldsymbol{F}}(Q) \mathfrak{I}_{\boldsymbol{F}}(Q)P^{n-\mathcal{D}}+O(P^{n-\mathcal{D}-1}Q^{2R+2+\varepsilon}).
    $$
Note that $n\ge D^24^{D+6}R^5$, \eqref{def A1n} and \eqref{t_0 lower bound} guarantee $t_0(n)>0$, 
and it follows that \eqref{conv/cond} holds.
By  \eqref{eq:local-inf} and \eqref{Int/Par/Inf}, we have
    \begin{equation*}
        \begin{split}
           M_0
           &=(\mathfrak{S}_{\boldsymbol{F}}+O(Q^{-\frac{1}{2}+\varepsilon})) (\mathfrak{I}_{\boldsymbol{F}}+O(Q^{-1})) P^{n-\mathcal{D}}+O(P^{n-\mathcal{D}-1}Q^{2R+2+\varepsilon}) \\
           &=\mathfrak{S}_{\boldsymbol{F}} \mathfrak{I}_{\boldsymbol{F}}P^{n-\mathcal{D}}+O(P^{n-\mathcal{D}}Q^{-\frac{1}{2}+\varepsilon})+O(P^{n-\mathcal{D}-1}Q^{2R+2+\varepsilon}).
        \end{split}
\end{equation*}
The choice $Q=P^{\varpi}$ with \eqref{def/pi} completes the proof.
\end{proof}

\subsection{Estimation of $M_n$}  
We now come to bound $M_1, \ldots, M_n$. Among all $M_1, \ldots , M_n$, the most complicated one is $M_n$. 
We treat $M_n$ in full detail in the following, and then indicate how to modify this treatment to control $M_1, \ldots , M_{n-1}$ in the 
next subsection. 

By \eqref{separate M into Mj} with $j=n$, we have
\begin{equation}\label{write Mn}
    \begin{split}
        M_n
        &= \sum_{k_1 \le Q}\cdots \sum_{k_n \le Q} \, \sideset{}{^\ast}\sum_{\chi_1 \bmod k_1}\cdots \sideset{}{^\ast}\sum_{\chi_n \bmod k_n} \, \sum_{\substack{q \le Q \\ k_0|q}}\frac{1}{\varphi^n(q)}\sideset{}{^{\dagger}}\sum_{\boldsymbol{a} \bmod q} \, \sideset{}{^\ast}\sum_{\boldsymbol{h} \bmod q}\bar{\chi}_1\chi^0(h_1)\cdots \bar{\chi}_n\chi^0(h_n) \\ 
        & \times e\bigg(\frac{\boldsymbol{a}\cdot\boldsymbol{F}(\boldsymbol{h})}{q}\bigg) \int  \sum_{ \boldsymbol{x}\in P\mathfrak{B} }\delta_1^- \cdots \delta_n^-e(\boldsymbol{\theta }  \cdot\boldsymbol{F}(\boldsymbol{x})) d\boldsymbol{\theta},
    \end{split}
\end{equation}
where $\delta^-_s=\delta^-(x_s, \chi_s\chi^0)$ for $s=1, \ldots, n$, $\chi^0$ is the principal character modulo $q$ 
and $k_0=[k_1, \dots, k_n]$. 
Lemma \ref{lem/bound/gauss} immediately yields 
\begin{equation}\label{using lemma bgs to bound Mn}
    \begin{split}
        M_n &\ll \sum_{k_1 \le Q}\cdots \sum_{k_n \le Q}k^{-\frac{3}{2}+\varepsilon}_0  \sideset{}{^\ast}\sum_{\chi_1 \bmod k_1}\cdots \sideset{}{^\ast}\sum_{\chi_n \bmod k_n} \bigg|\int R_n d \boldsymbol{\theta}\bigg| 
    \end{split}
\end{equation}
with
\begin{equation}\label{define Rn}
    R_n=\sum_{ \boldsymbol{x}\in P\mathfrak{B} }\delta_1^- \cdots \delta_n^-e(\boldsymbol{\theta }  \cdot\boldsymbol{F}(\boldsymbol{x})).
\end{equation}
The above two formulae are the starting points of our estimation of $M_n$. 
Our treatment of $M_n$ falls naturally into two cases:
\begin{enumerate}[(i)]
    \item at least one of these $k_1, \dots, k_n$ is large; 
    \item all of these $k_1, \dots, k_n$ are small.
\end{enumerate}

\noindent Now we treat the first case.

\begin{lemma}
    Let $M_n$ be as in \eqref{write Mn}.
    Let $Q$ and $n$ be as in Lemma \ref{lem/maj/con}.
    For any constant $A > 0$, there is a constant $B = B(A) > 0$,
    such that if one of the moduli $k_1, \dots , k_n$ is larger than $(\log P)^{4B}$,
    then
    \begin{equation*}
        M_n \ll P^{n-\mathcal{D}}(\log P)^{-A}.
    \end{equation*}
\end{lemma}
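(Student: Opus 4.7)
The plan is to convert the factor $k_0^{-3/2+\varepsilon}$ supplied by Lemma~\ref{lem/bound/gauss} into the desired logarithmic saving. Under the hypothesis that some $k_j>(\log P)^{4B}$, one has $k_0=[k_1,\ldots,k_n]\ge\max_j k_j>(\log P)^{4B}$, so $k_0^{-3/2+\varepsilon}\le(\log P)^{-6B+\varepsilon}$. Starting from \eqref{using lemma bgs to bound Mn}, the task reduces to showing that the residual quantity $\sum^{\ast}_{\chi_1\bmod k_1}\cdots\sum^{\ast}_{\chi_n\bmod k_n}\bigl|\int R_n\, d\boldsymbol{\theta}\bigr|$, after being summed over all $k_1,\ldots,k_n\le Q$, contributes no more than $P^{n-\mathcal{D}}(\log P)^c$ for some absolute constant $c=c(n,R,D)$; choosing $B=B(A):=(A+c+1)/6$ then yields $M_n\ll P^{n-\mathcal{D}}(\log P)^{-A}$.

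For the residual estimate, I would first apply Cauchy's inequality on the $\boldsymbol{\theta}$-integral to obtain $|\int R_n\, d\boldsymbol{\theta}|^2\le|\mathfrak{N}(q,\boldsymbol{a};Q)|\int|R_n|^2\, d\boldsymbol{\theta}$, and then another Cauchy-Schwarz in the joint character sum, in the spirit of the large sieve, reducing matters to $\sum^{\ast}_{\chi_1}\cdots\sum^{\ast}_{\chi_n}\int|R_n|^2\, d\boldsymbol{\theta}$. Expanding the square as a double sum over $(\boldsymbol{x},\boldsymbol{y})\in(P\mathfrak{B})^2$ with weights $\prod_j \delta_j^-(x_j)\overline{\delta_j^-(y_j)}$ and phase $e(\boldsymbol{\theta}\cdot(\boldsymbol{F}(\boldsymbol{x})-\boldsymbol{F}(\boldsymbol{y})))$, and using orthogonality of primitive characters on each factor to collapse the $\chi_j$-sum onto the congruence $x_j\equiv y_j\pmod{k_j}$, one is led to a character-twisted pair count amenable to Proposition~\ref{prop} (via Remark~\ref{prop rem}) applied to the doubled system $\boldsymbol{F}^{\ast}=(\boldsymbol{F},-\boldsymbol{F})$ in $2n$ variables, in the same spirit as in the proof of Lemma~\ref{lem/bound/gauss}. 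Since $n\ge D^24^{D+6}R^5$, Lemma~\ref{lemma key} supplies a decomposition of $\boldsymbol{F}^{\ast}$ for which both subsystems $\boldsymbol{f}^{\ast}$ and $\boldsymbol{g}^{\ast}$ are admissible, yielding an estimate of the shape $\int|R_n|^2\, d\boldsymbol{\theta}\ll P^{2n-2\mathcal{D}}(\log P)^{O(1)}$ up to polynomial-in-$k_j$ losses from the congruence conditions, which are then absorbed by the Gauss-sum saving.

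The main obstacle is the precise balancing at the end: one must check that the polynomial-in-$k_j$ losses accumulated from the two applications of Cauchy--Schwarz and from the orthogonality collapse onto the diagonals $x_j\equiv y_j\pmod{k_j}$ are actually dominated by $k_0^{-3/2+\varepsilon}$, so that the sum over $k_1,\ldots,k_n\le Q$ converges uniformly in $P$. Since $Q=P^\varpi$ with $\varpi=\frac{1}{4(R+1)}$ is a positive power of $P$, even a mild $Q$-loss would translate directly into a power of $P$ and could spoil the argument; the particular value of $\varpi$ specified in \eqref{def/pi} is calibrated precisely to make this balancing go through, just as it did in the proof of Lemma~\ref{lem/bound/gauss}.
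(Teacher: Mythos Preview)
Your reduction at the outset is right: once some $k_j>(\log P)^{4B}$ we have $k_0^{-3/2+\varepsilon}\le(\log P)^{-5B}$, and the problem is indeed to bound the residual character sum over $|\int R_n\,d\boldsymbol{\theta}|$. But the route you then propose has a genuine gap that cannot be repaired by balancing constants.

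The trouble is twofold. First, the Cauchy--Schwarz in characters costs you a factor $\big(\prod_j\varphi(k_j)\big)^{1/2}$, while the Gauss-sum saving is only $k_0^{-3/2+\varepsilon}$ with $k_0=[k_1,\ldots,k_n]$. Since $\prod_j k_j$ can be astronomically larger than $k_0$ (take all $k_j$ equal to $k$, so $\prod_j k_j=k^n$ while $k_0=k$), the factor $k_0^{-3/2}(\prod_j k_j)^{1/2}$ is of order $k^{(n-3)/2}$, and summing over $k\le Q$ already loses a positive power of $Q=P^{\varpi}$. No choice of $\varpi>0$ makes this go away, because the exponent involves $n$, which here is at least $D^24^{D+6}R^5$. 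Second, and more fundamentally, the doubled-system application of Proposition~\ref{prop} treats the weights $\delta_j^-$ as generic functions of size $\le\log P$; it saves only via Weyl differencing and $\sup(\mathcal{E})$. But the integration is over the box $|\theta_{i,d}|\le Q/P^d$, which is the major arc around the origin, so $\sup(\mathcal{E})$ carries no saving and Proposition~\ref{prop} yields at best $\int|R_n|^2\,d\boldsymbol{\theta}\ll P^{2n-\mathcal{D}+\varepsilon}$, not the $P^{2n-2\mathcal{D}}$ you need. The missing factor of $P^{\mathcal{D}}$ is precisely the cancellation that comes from $\sum_{m\le x}\Lambda(m)\chi(m)$ being small for non-principal $\chi$, and nothing in your argument touches it.

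The paper proceeds quite differently: it applies partial summation and the explicit formula to each factor $\sum_{m\le x_j}\delta^-(m,\chi_j\chi^0)=-\sum_{|\gamma|\le T}(x_j^{\rho}-1)/\rho+O(\cdot)$ with $T=P^{1/3}$, splitting $R_n=E_1+E_2$. For the main term $E_1$, after the change of variables $\boldsymbol{x}\to P\boldsymbol{x}$, $\theta_{i,d}\to P^{-d}\theta_{i,d}$, the $\boldsymbol{\theta}$-integral becomes a twisted singular integral bounded absolutely by \eqref{int with factor}; this is what converts the $\theta$-volume loss into a true $P^{n-\mathcal{D}}$ shape. One is left with $P^{-\mathcal{D}}\sum_{|\gamma_1|\le T}\cdots\sum_{|\gamma_n|\le T}P^{\beta_1+\cdots+\beta_n}$, and the large-sieve zero-density estimate \eqref{zero desity estimate} gives $\sum_{k\le Q}\sum_{\chi}^{\ast}\sum_{|\gamma|\le T}P^{\beta-1}\ll(\log P)^{13}$ provided $Q^{24/5}T^{12/5}\le P$, which the choices $T=P^{1/3}$ and $\varpi=\frac{1}{4(R+1)}$ ensure. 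Multiplying by the $(\log P)^{-5B}$ from $k_0^{-3/2+\varepsilon}$ then gives the bound. The point is that the cancellation is extracted analytically from the zeros of $L(s,\chi)$, not from the circle-method machinery of Proposition~\ref{prop}.
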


\begin{proof}
In \eqref{define Rn}, we employ partial summation formula to each variable $x_j$ separately, getting
\begin{equation}\label{rewrite Rn}
R_n=\int_{P\mathfrak{B}}e(\boldsymbol{\theta }\cdot\boldsymbol{F}(\boldsymbol{x}))
d\bigg\{ \sum_{m\le x_1}\delta^-(m, \chi_1\chi^0)\bigg\}\cdots d\bigg\{ \sum_{m\le x_n}\delta^-(m, \chi_n\chi^0)\bigg\}.
\end{equation} 
To analyze \eqref{rewrite Rn}, let $\chi \bmod k$ be a primitive character, $k \vert q$ and $1 \le x \le P$. 
Recall \eqref{define delta-} and the explicit formula (see e.g. \cite[Proposition 5.25]{IwaKow})
    $$
    \sum_{m\le x}\delta^-(m, \chi\chi^0)=-\sum_{|\gamma_{\chi}| \le T }\frac{x^{\rho_{\chi} }-1}{\rho }+O\bigg(\frac{P \log P}{T}+(\log P)^2 \bigg),
    $$
where $\rho_{\chi} = \beta_{\chi} + i\gamma_{\chi}$ runs through the non-trivial zeros of the Dirichlet $L$-function $L(s, \chi)$ with $|\gamma_{\chi}| \le T$.
Choosing 
$$
T=P^{\frac{1}{3}}, 
$$
we have
\begin{equation*}
\sum_{m\le x}\delta^-(m, \chi\chi^0)=-\sum_{|\gamma_{\chi}| \le T }\frac{x^{\rho_{\chi} }-1}{\rho }+ \sigma(x), \quad  
\sigma(x) \ll P^{\frac{2}{3}} \log P,
\end{equation*}
    and hence 
    \begin{equation*}
        d\bigg\{ \sum_{m\le x}\delta^-(m, \chi_1\chi^0)\bigg\}=-\sum_{|\gamma_{\chi}| \le T }x^{\rho_{\chi}-1}dx+ d\sigma(x).
    \end{equation*}

It is obivious that the above formula still holds when $x$ and $\chi$ are replaced, respectively,  
by $x_j$ and $\chi_j$ with $j=1, \ldots, n$. And it follows that, for $j=1, \ldots, n$,
    \begin{equation}\label{inserting explicit formula}
        \begin{split}
            &\ \ \ \int^{b''_jP}_{b'_jP}e(\boldsymbol{\theta }\cdot\boldsymbol{F}(\boldsymbol{x}))
            d\bigg\{ \sum_{m\le x_j}\delta^-(m, \chi_j\chi^0)\bigg\} \\
            &=-\sum_{|\gamma_{\chi_j}|\le T}\int^{b''_jP}_{b'_jP}e(\boldsymbol{\theta }\cdot\boldsymbol{F}(\boldsymbol{x}))x^{\rho_{\chi_j}-1}_j dx_j+\int^{b''_jP}_{b'_jP}e(\boldsymbol{\theta }\cdot\boldsymbol{F}(\boldsymbol{x}))d\sigma(x_j).
        \end{split}
    \end{equation}
    The second part above can be well controlled as follows:
    \begin{equation*}
        \begin{split}
           &\ll |\sigma(b''_jP)|+ |\sigma(b'_jP)|+\bigg|\int^{b''_jP}_{b'_jP}e(\boldsymbol{\theta }\cdot\boldsymbol{F}(\boldsymbol{x}))\frac{\partial(\boldsymbol{\theta }\cdot\boldsymbol{F}(\boldsymbol{x}))}{\partial x_j}\sigma(x_j)dx_j \bigg|\\
           &\ll P^{\frac{2}{3}} \log P \bigg(1+\sum_{d \in \Delta}\frac{Q}{P^d}P^d\bigg) 
           \ll P^{\frac{2}{3}+\varepsilon}Q.
        \end{split}
    \end{equation*} 
Hence \eqref{inserting explicit formula} becomes 
\begin{equation}\label{ins/exp/for/ult}
             \int^{b''_jP}_{b'_jP}e(\boldsymbol{\theta }\cdot\boldsymbol{F}(\boldsymbol{x}))
             d\bigg\{ \sum_{m\le x_j}\delta^-(m, \chi_j\chi^0)\bigg\} 
             =-\sum_{|\gamma_{\chi_j}|\le T}\int^{b''_jP}_{b'_jP}e(\boldsymbol{\theta }\cdot\boldsymbol{F}(\boldsymbol{x}))x^{\rho_{\chi_j}-1}_j dx_j+O(P^{\frac{2}{3}+\varepsilon}Q).
\end{equation}
    For $j=1,\ldots,n$, we insert \eqref{ins/exp/for/ult} into \eqref{rewrite Rn} to get
    \begin{equation}\label{separate Rn}
        R_n=E_1+E_2,
    \end{equation}
where $E_1$ denotes the contribution from the product of the main terms in \eqref{ins/exp/for/ult} for all $j = 1, \ldots , n$,  
and $E_2$ the contribution from the error term in \eqref{ins/exp/for/ult} for some $j$.
    More precisely,
    \begin{equation}\label{write E1}
        E_1=\sum_{|\gamma_{1}|\le T} \cdots \sum_{|\gamma_{n}| \le T} (-1)^n\int_{\boldsymbol{x} \in P\mathfrak{B}}e(\boldsymbol{\theta }\cdot\boldsymbol{F}(\boldsymbol{x}))x^{\rho_{1}-1}_1\cdots x^{\rho_{n}-1}_n d\boldsymbol{x},
    \end{equation}
    where we abbreviate $\gamma_{\chi_j}$ and $\rho_{\chi_j}$, respectively, as $\gamma_j$ and $\rho_j$ for all $j=1,\ldots, n$.
    And 
    \begin{equation}\label{bound E2}
        E_2 \ll \sum_{J \subset \{1,\dots, n\}}(P^{\frac{2}{3}+\varepsilon}Q)^{n-|J|}\prod_{j \in J}\bigg\{\sum_{|\gamma_j|\le T}\int^{b''_jP}_{b'_jP}x^{\beta_{j}-1}_j dx_j\bigg\}, 
    \end{equation}
    where $J$ runs through all proper subsets of $\{1, \ldots, n\}$ and $\beta_j$ is the real part of $\rho_j$ for each $j=1, \ldots, n$.
    Therefore it follows from \eqref{using lemma bgs to bound Mn} and \eqref{separate Rn} that
    \begin{equation}\label{separate Mn}
        \begin{split}
            M_n \ll &\sum_{k_1 \le Q}\cdots \sum_{k_n \le Q}k^{-\frac{3}{2}+\varepsilon}_0  \sideset{}{^\ast}\sum_{\chi_1 \bmod k_1}\cdots \sideset{}{^\ast}\sum_{\chi_n \bmod k_n} \bigg\{\bigg| \int E_1 d \boldsymbol{\theta}\bigg|+\int|E_2| d\boldsymbol{\theta}\bigg\}  \\
            &=:M^{(M)}_n+M^{(E)}_n.
        \end{split}
    \end{equation}
    In $M^{(M)}_n$, it is crucial that the absolute value symbol is outside the integral.   
    
    Now we begin to handle $M^{(M)}_n$ and $M^{(E)}_n$ separately. 
    First we will give some estimates for $M^{(M)}_n$.
    Changing variables $\boldsymbol{x}\rightarrow P\boldsymbol{x}$, $\theta_{i,d}\rightarrow P^{-d}\theta_{i,d}$ for each $d \in \Delta$ and $1 \le i \le r_d$, 
    we get
    \begin{equation}\label{bound int E1}
        \int E_1 d \boldsymbol{\theta}=(-1)^n\sum_{|\gamma_{1}| \le T} \cdots \sum_{|\gamma_{n}| \le T} P^{\rho_1+\cdots+\rho_n-\mathcal{D}}\int_{|\boldsymbol{\theta}|\le Q }\int_{\boldsymbol{x} \in \mathfrak{B}}e(\boldsymbol{\theta }\cdot\boldsymbol{F}(\boldsymbol{x}))\Phi (\boldsymbol{x}) d\boldsymbol{x}d\boldsymbol{\theta}
    \end{equation}
    with
    $$
    \Phi (\boldsymbol{x})=x^{\rho_{1}-1}_1\cdots x^{\rho_{n}-1}_n,
    $$
    where we recall that $\mathcal{D}$ is as in \eqref{def/R/calD}.
    Existence of \eqref{int with factor} in \S8 now guarantees that the double integral on the right hand side of \eqref{bound int E1} is bounded by an absolute constant.
    Consequently
    $$
    \bigg|\int E_1 d \boldsymbol{\theta}\bigg|\ll \sum_{|\gamma_{1}| \le T} \cdots \sum_{|\gamma_{n}|\le T} P^{\beta_1+\cdots+\beta_n-\mathcal{D}},
    $$
    and thus
    \begin{equation}\label{bound MnM 1}
        \begin{split}
            M^{(M)}_n
            &=\sum_{k_1 \le Q}\cdots \sum_{k_n \le Q}k^{-\frac{3}{2}+\varepsilon}_0 \, \sideset{}{^\ast}\sum_{\chi_1 \bmod k_1}\cdots \sideset{}{^\ast}\sum_{\chi_n \bmod k_n} \bigg| \int E_1 d \boldsymbol{\theta}\bigg| \\
            &\ll \sum_{k_1 \le Q}\cdots \sum_{k_n \le Q} \, k^{-\frac{3}{2}+\varepsilon}_0 \, \sideset{}{^\ast}\sum_{\chi_1 \bmod k_1}\cdots \sideset{}{^\ast}\sum_{\chi_n \bmod k_n} \sum_{|\gamma_{1}| \le T} \cdots \sum_{|\gamma_{n}| \le T} P^{\beta_1+\cdots+\beta_n-\mathcal{D}}. 
        \end{split}
    \end{equation}
    Since there is a $j$ such that $k_j \ge (\log P)^{4B}$, we must have $k_0 \ge (\log P)^{4B}$, and then the above is
    \begin{equation}\label{bound MnM 2}
        M^{(M)}_n \ll P^{n-\mathcal{D}}(\log P)^{-5B}
        \bigg\{ \sum_{k\le Q} \, \sideset{}{^\ast}\sum_{\chi \bmod k}\sum_{|\gamma|\le T}P^{\beta-1} \bigg\}^n. 
    \end{equation}
    This saving of powers of $\log P$ is crucial in our argument.

It is already in a good shape, and the quantity within the brackets can de dealt with by the zero-density estimate of the large-sieve type 
(see e.g. \cite[Theorem 1]{Mon}) that
\begin{equation}\label{zero desity estimate}
        \sum_{k\le Q} \, \sideset{}{^\ast}\sum_{\chi \bmod k} \, 
        \sum_{\substack{\sigma \le \beta \le 1 \\ |\gamma| \le T}}1 \ll (Q^2T)^{\frac{12}{5}(1-\sigma)}(\log QT)^{13}, 
\end{equation}
where $\beta + i\gamma$ runs through non-trivial zeros of the Dirichlet $L$-function $L(s, \chi)$ with $\sigma \le \beta \le 1$ and $|\gamma| \le T$.
Employing \eqref{zero desity estimate} and partial summation formula, we get 
    \begin{equation}\label{using zero density}
        \begin{split}
            \sum_{k\le Q} \, \sideset{}{^\ast}\sum_{\chi \bmod k}\sum_{|\gamma| \le T}P^{\beta-1} 
            &\ll (\log P)^{13} \int^1_{\frac{1}{2}}P^{\sigma-1}(Q^2T)^{\frac{12}{5}(1-\sigma)}d\sigma \\
            &\ll (\log P)^{13} \max_{\frac{1}{2}\le \sigma \le 1}\bigg( \frac{Q^{\frac{24}{5}}T^{\frac{12}{5}}}{P}\bigg)^{1-\sigma} \ll (\log P)^{13},
        \end{split}
    \end{equation}
    where we have applied $T=P^{\frac{1}{3}}$, $Q=P^{\varpi}$ with \eqref{def/pi} and $R\ge 2$.
 We now remark that the choice of $T$ is also an embody for art of balancing. 
 Only when $\varpi$ is chosen sufficiently small, does the suitable choice of $T$ exist. 

    Therefore,  \eqref{bound MnM 2} becomes
    \begin{equation}\label{MnM bound}
        M^{(M)}_n \ll P^{n-\mathcal{D}}(\log P)^{-A}
    \end{equation}
    with $5B=A+13$.

    Next we turn to the estimation of $M^{(E)}_n$, for which a rough estimate suffices. We start from \eqref{bound E2} and \eqref{separate Mn}, getting
    \begin{equation}\label{estimating MnE}
        \begin{split}
            M^{(E)}_n
            &=\sum_{k_1 \le Q}\cdots \sum_{k_n \le Q}k^{-\frac{3}{2}+\varepsilon}_0\, \sideset{}{^\ast}\sum_{\chi_1 \bmod k_1}\cdots \sideset{}{^\ast}\sum_{\chi_n \bmod k_n} \int| E_2| d\boldsymbol{\theta} \\
            &\ll \sum_{k_1 \le Q}\cdots \sum_{k_n \le Q} \, k^{-\frac{3}{2}+\varepsilon}_0 \, \sideset{}{^\ast}\sum_{\chi_1 \bmod k_1}\cdots \sideset{}{^\ast}\sum_{\chi_n \bmod k_n}  \sum_{J \subset \{1,\dots, n\}}(P^{\frac{2}{3}+\varepsilon}Q)^{n-|J| }  \\
            & \ \ \ \ \times \prod_{j \in J}\bigg\{\sum_{|\gamma_j|\le T}\int^{b''_jP}_{b'_jP}x^{\beta_{j}-1}_j dx_j\bigg\} 
            \prod_{d \in \Delta}\bigg(\frac{Q}{P^d}\bigg)^{r_d} \\
            &\ll P^{n-\mathcal{D}+\varepsilon}Q^{R}\sum_{J \subset \{1,\dots, n\}} (P^{-\frac{1}{3}}Q)^{n-| J| } 
            \bigg\{\sum_{k \le Q} \, \sideset{}{^\ast}\sum_{\chi \bmod k}\sum_{|\gamma|\le T}P^{\beta-1}\bigg\}^{|J|},
        \end{split}
    \end{equation}
which is, by \eqref{using zero density} and \eqref{def/pi},
\begin{equation}\label{MnE bound}
        M^{(E)}_n \ll P^{n-\mathcal{D}-\delta},
\end{equation}
where $\delta>0$ is a fixed constant.
Finally, \eqref{separate Mn}, \eqref{MnM bound} and \eqref{MnE bound} yield the desired result. 
\end{proof}

Now, we treat the second case that all the moduli $k_1, \ldots, k_n$ are small.
\begin{lemma}
    Let $M_n$ be as in \eqref{write Mn}.
    Let $Q$ and $n$ be as in Lemma \ref{lem/maj/con}.
    Let $A > 0$ and $B>0$ be both arbitrary.
    If all of the moduli $k_1, \dots , k_n \le (\log P)^B$,
    then
    \begin{equation*}
        M_n \ll P^{n-\mathcal{D}}(\log P)^{-A}.
    \end{equation*}
\end{lemma}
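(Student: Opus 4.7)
The plan is to mimic the proof of the previous lemma. I would start from the same decomposition coming from the truncated explicit formula at $T=P^{1/3}$, namely \eqref{separate Rn}--\eqref{separate Mn}, writing $R_n=E_1+E_2$ and $M_n = M^{(M)}_n+M^{(E)}_n$. The essential difference is that when all $k_j\le(\log P)^B$, the Gauss-sum factor $k_0^{-3/2+\varepsilon}$ supplied by Lemma \ref{lem/bound/gauss} is bounded by a constant and no longer produces any power-of-log saving; all the required saving must come from the distribution of zeros of the $L$-functions $L(s,\chi_j)$ of small conductor.

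For $M^{(M)}_n$ I plan to use the classical zero-free region together with Siegel's theorem. Given $A>0$, fix $\eta\in(0,1)$. For primitive $\chi\bmod k$ with $k\le(\log P)^B$ and $|\gamma|\le T$, the standard zero-free region covers the complex characters while Siegel's ineffective bound $\beta\le 1-c_\varepsilon k^{-\varepsilon}$ (with $\varepsilon=\eta/B$) handles the exceptional real characters; together they give the uniform estimate $\beta\le 1-c_\eta(\log P)^{-\eta}$. Substituting this into Montgomery's log-free zero-density estimate \eqref{zero desity estimate} and reprising the partial-summation calculation that led to \eqref{using zero density}, but cutting the $\sigma$-integration off at $1-c_\eta(\log P)^{-\eta}$, produces
\begin{equation*}
\sum_{k\le(\log P)^B}\sideset{}{^\ast}\sum_{\chi\bmod k}\sum_{|\gamma|\le T}P^{\beta-1}\ll\exp\bigl(-c'_\eta(\log P)^{1-\eta}\bigr),
\end{equation*}
which is smaller than any fixed power of $\log P$. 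Dropping the coupling $k_0^{-3/2+\varepsilon}\le1$, the sum over $(k_1,\chi_1),\dots,(k_n,\chi_n)$ in $M^{(M)}_n$ factors completely, and combined with the identity $\bigl|\int E_1\,d\boldsymbol\theta\bigr|\ll\sum_{\boldsymbol\gamma}P^{\sum\beta_j-\mathcal D}$ from \eqref{bound int E1} (whose $\boldsymbol\theta$-integral is uniformly bounded by convergence of the singular integral \eqref{int with factor}), this delivers $M^{(M)}_n\ll P^{n-\mathcal D}(\log P)^{-A}$.

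For $M^{(E)}_n$ a cruder argument suffices. Starting from \eqref{bound E2}--\eqref{estimating MnE} and using that $J$ is a proper subset of $\{1,\dots,n\}$, the product always carries at least one trivial factor of size $P^{2/3+\varepsilon}Q$. The sum over characters contributes $K^2=(\log P)^{2B}$ per index $j\notin J$ and $P(\log P)^{-A}$ per index $j\in J$ by the display above, while integrating over the $\boldsymbol\theta$-cell contributes $Q^RP^{-\mathcal D}$. The specific choice $\varpi=1/(4(R+1))$ in \eqref{def/pi} makes $\varpi(R+1)=1/4<1/3$, and a direct calculation shows that for every proper $J$ the resulting exponent of $P$ is strictly below $n-\mathcal D$, so $M^{(E)}_n\ll P^{n-\mathcal D-\delta}$ for some fixed $\delta>0$.

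The main obstacle is the Siegel step in the analysis of $M^{(M)}_n$: its ineffectivity is the sole source of ineffectivity in the final bound, and is the same price paid in the earlier works \cite{LiuZha98,Liu03,Liu12} cited in the introduction.
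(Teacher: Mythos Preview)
Your overall plan mirrors the paper's proof: split $R_n=E_1+E_2$, bound $M_n^{(E)}$ by the power-saving argument from the previous lemma, and for $M_n^{(M)}$ drop the now-useless factor $k_0^{-3/2+\varepsilon}$ and extract all the saving from the zeros of $L(s,\chi_j)$ instead. Your handling of $M_n^{(E)}$ and the use of Siegel's theorem for the exceptional real zero are both correct and match the paper.

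There is, however, a genuine gap in the bound you claim for the non-exceptional zeros. The classical (de la Vall\'ee Poussin) zero-free region gives only $\beta\le 1-c/\log\bigl(k(|\gamma|+2)\bigr)$, and since you have already fixed $T=P^{1/3}$ this yields merely $\beta\le 1-c'/\log P$, i.e.\ the exponent $\eta=1$. Plugging this into your partial-summation step produces $P^{\beta-1}\le e^{-c'}$, a constant, and after combining with the zero-density estimate you recover only the bound $(\log P)^{13}$ of \eqref{using zero density} with no further saving; consequently $M_n^{(M)}\ll P^{n-\mathcal D}(\log P)^{13n}$, which is useless. The parameter $\eta\in(0,1)$ cannot be chosen freely: Siegel's theorem controls only the single exceptional real zero, not the generic complex ones, and for the latter the classical region is the bottleneck.

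The paper closes precisely this gap by invoking \emph{Chudakov's} wider zero-free region $\sigma\ge 1-c_1/\bigl(\log k+(\log(|\gamma|+2))^{4/5}\bigr)$ (see \cite[Satz VIII.6.2]{Prachar}), which for $k\le(\log P)^B$ and $|\gamma|\le T=P^{1/3}$ gives $1-\beta\gg(\log P)^{-4/5}$ and hence a saving of $\exp\bigl(-c_2(\log P)^{1/5}\bigr)$. Note that you cannot rescue the argument by shrinking $T$ to widen the classical region, because the explicit-formula remainder forces $T\ge P^{\delta}$ in order to keep $E_2$ under control. With Chudakov's region in place of the classical one (and $\eta=4/5$ fixed rather than free), your argument goes through essentially as written.
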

\begin{proof}
    Let $E_1$ and $E_2$ be as in \eqref{write E1} and \eqref{bound E2} respectively.
    We have, similarly to \eqref{separate Mn},
    \begin{equation}\label{separate Mn '}
        \begin{split}
            M_n \ll &\sum_{k_1 \le (\log P)^B}\cdots \sum_{k_n \le (\log P)^B} \,  \sideset{}{^\ast}\sum_{\chi_1 \bmod k_1}\cdots \sideset{}{^\ast}\sum_{\chi_n \bmod k_n} \bigg\{\bigg| \int E_1 d \boldsymbol{\theta}\bigg|+\int|E_2| d\boldsymbol{\theta}\bigg\}  \\
            &=:M^{(M)}_n+M^{(E)}_n.
        \end{split}
    \end{equation}
Here we cannot expect any saving from the negative power of $k_0$ since at the present situation $k_0$ is small, and therefore we just drop it.
However, the key saving comes from the zero-free region of Chudakov-type.
    
    Similarly to \eqref{bound MnM 1}, we can prove 
    \begin{equation*}
        \begin{split}
            M^{(M)}_n
            &=\sum_{k_1 \le (\log P)^B}\cdots \sum_{k_n \le (\log P)^B} \, \sideset{}{^\ast}\sum_{\chi_1 \bmod k_1}\cdots \sideset{}{^\ast}\sum_{\chi_n \bmod k_n} \bigg| \int E_1 d \boldsymbol{\theta}\bigg| \\
            &\ll \sum_{k_1 \le (\log P)^B}\cdots \sum_{k_n \le (\log P)^B} \,  \, \sideset{}{^\ast}\sum_{\chi_1 \bmod k_1}\cdots \sideset{}{^\ast}\sum_{\chi_n \bmod k_n} \sum_{|\gamma_{1}|\le T} \cdots \sum_{|\gamma_{n}|\le T} P^{\beta_1+\cdots+\beta_n-\mathcal{D}}. 
        \end{split}
    \end{equation*}
    This together with the argument leading to \eqref{bound MnM 2}, without the saving from the negative power of $k_0$, give
\begin{equation}\label{MnM ' bound}
        M^{(M)}_n \ll P^{n-\mathcal{D}} \bigg\{ \sum_{k\le (\log P)^B} \, \sideset{}{^\ast}
        \sum_{\chi \bmod k}\sum_{|\gamma|\le T}P^{\beta-1} \bigg\}^n. 
\end{equation}
    The quantity within the braces will be bounded by the classical zero-density estimates that
    \begin{equation}\label{classsical zero-density estimate}
        \sum_{\chi \bmod k}\sum_{\substack{\sigma \le \beta \le 1 \\ |\gamma| \le T }} 1 \ll (kT)^{\frac{12}{5}(1-\sigma)}(\log kT)^{13}, 
    \end{equation}
    where $\beta+i\gamma$ runs through non-trivial zeros of $L(s,\chi)$ with $\sigma \le \beta \le 1$ and $|\gamma| \le T$ and $\chi$ runs through characters $\bmod \ k$.
    This is not a large-sieve type estimate compared to \eqref{zero desity estimate}.
    Additionally we also need Chudakov's zero-free region for Dirichlet $L$-functions 
    (see e.g. Prachar \cite[Satz VIII.6.2]{Prachar}) that, for any $\chi \bmod  k$, there exists a constant $c_1>0$ such that $L(\sigma+it,\chi)\neq 0$ in the region
    \begin{equation*}
        \sigma \ge 1-\frac{c_1}{\log k+(\log (|t|+2))^{\frac{4}{5}}}
    \end{equation*}
    except for the possible Siegel zero.
    However since we have $k\le (\log P)^B$, the Siegel zero does not exist in the present setting.
    It follows that $L(s,\chi)$ is zero-free for $\sigma \ge 1-\eta(T)$ and $| t| \le T $, where
$$
    \eta(\tau)=\frac{c_1}{2 (\log(|\tau|+2))^{\frac{4}{5}}}
$$
    and $c_1>0$.
    Hence, by \eqref{classsical zero-density estimate},
    \begin{equation}\label{use classical zero-density}
        \begin{split}
            \sum_{\chi \bmod k}\sum_{\substack{ |\gamma| \le T }} P^{\beta-1} 
            &\ll -\int_{\frac{1}{2}}^{1-\eta{T}} P^{\sigma-1}d  
            \bigg\{\sum_{\chi \bmod k}\sum_{\substack{\sigma \le \beta \le 1 \\ |\gamma|\le T}} 1 \bigg\}  \\
            &\ll (\log P)^{13} \max_{\frac{1}{2}\le \sigma \leq 1-\eta(T)}\bigg(\frac{(kT)^{\frac{12}{5}}}{P}\bigg)^{1-\sigma}. 
        \end{split}
    \end{equation}
    Since $k\le (\log P)^B$ and $T=P^{\frac{1}{3}}$, the above maximum is
    \begin{equation}\label{use Chudakov zero-free region}
        \ll (\log P)^{13}\max_{\frac{1}{2}\le \sigma \le 1-\eta(T)} P^{\frac{1}{5}(\sigma-1)} 
        \ll \exp\bigg(-\frac{c_1}{10}\frac{\log P}{ (\log T)^{\frac{4}{5}}}\bigg) \ll \exp(-c_2 (\log P)^{\frac{1}{5}})
    \end{equation}
    for some suitable constant $c_2>0$.
    Inserting \eqref{use classical zero-density} and \eqref{use Chudakov zero-free region} into \eqref{MnM ' bound}, we get 
\begin{equation*}
        M_n^{(M)} \ll P^{n-\mathcal{D}}\exp(-c_3(\log P)^{\frac{1}{5}}),
\end{equation*}
    where $c_3>0$ is a fixed constant.

    Now we turn to $M_n^{(E)}$. The argument leading to \eqref{estimating MnE} gives
    $$
    P^{n-\mathcal{D}}Q^{R}\sum_{J \subset \{1,\dots, n\}}((\log P)^{B}P^{-\frac{1}{3}})^{n-| J| } \bigg\{\sum_{k \le (\log P)^B} \, \sideset{}{^\ast}\sum_{\chi \bmod k}\sum_{|\gamma| \le T}P^{\beta_j}\bigg\}^{| J|},
    $$
    where $J$ runs through all proper subsets of $\{1,\ldots,n \}$.
    It is plain that the right hand side of above is less than that in \eqref{estimating MnE}. Therefore, \eqref{MnE bound} yields
    \begin{equation}\label{MnE ' bound}
        M_n^{(E)}\ll P^{n-\mathcal{D}-\delta}
    \end{equation}
    for the present $M_n^{(E)}$, which is more than enough.

Inserting  \eqref{MnM ' bound} and \eqref{MnE ' bound} into \eqref{separate Mn '} proves the lemma.   
\end{proof}

\subsection{Estimation of $M_1,\ldots,M_{n-1}$}  
At this moment, it remains to show how to modify the previous argument to bound $M_1,\ldots,M_{n-1}$.
\begin{lemma}\label{lemma bound Mj}
    Let $M_j$ be as in \eqref{separate M into Mj}.
    Let $Q$ and $n$ be as that in Lemma \ref{lem/maj/con}.
    Then, for $j=1,\ldots,n-1$,
    \begin{equation}\label{Mj bound}
        M_j \ll P^{n-\mathcal{D}}(\log P)^{-A},
    \end{equation}
    where $A>0$ is an any fixed constant.
\end{lemma}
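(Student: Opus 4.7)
The plan is to follow the same strategy used for $M_n$ in \S9.3, adapting it to account for the $n-j$ factors $\delta^+$ among $\delta^{\pm}_1 \cdots \delta^{\pm}_n$. Recall that $\delta^+(\chi_s\chi^0)=1$ precisely when $\chi_s\bmod k_s$ is trivial (i.e.\ $k_s=1$), and is zero otherwise. Hence those $n-j$ factors collapse the corresponding $k_s$-sums and $\chi_s$-sums to the single choice $k_s=1$, $\chi_s$ trivial, leaving ordinary exponential sums $\sum_{x_s\in \mathcal{B}_1(P)} e(\boldsymbol{\theta}\cdot \boldsymbol{F}(x_s,\ldots))$ in those $n-j$ variables. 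After a harmless renumbering I would arrange that the $j$ indices carrying $\delta^-$ are $1,\ldots,j$, so that $k_0=[k_1,\ldots,k_j]$ and Lemma \ref{lem/bound/gauss} still yields the Gauss-sum bound $k_0^{-3/2+\varepsilon}$.

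Next I would apply the explicit formula to each of the $j$ sums $\sum_{m\le x_s}\delta^-(m,\chi_s\chi^0)$ as in \eqref{ins/exp/for/ult}, producing a main contribution $E_1^{(j)}$ (a sum over non-trivial zeros $\rho_1,\ldots,\rho_j$) and an error $E_2^{(j)}$. After rescaling $\boldsymbol{x}\to P\boldsymbol{x}$ and $\theta_{i,d}\to P^{-d}\theta_{i,d}$, the main part factors to $P^{\rho_1+\cdots+\rho_j}$ from the $\delta^-$ variables and $P^{n-j}$ from the trivial $\delta^+$ variables, while the Jacobian of the $\boldsymbol{\theta}$-scaling supplies $P^{-\mathcal{D}}$, leaving the clean size $P^{n-\mathcal{D}}$ weighted by $j$ zero-sums. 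Just as in \eqref{bound int E1}, the inner double integral against $\Phi(\boldsymbol{x})=x_1^{\rho_1-1}\cdots x_j^{\rho_j-1}$ is $O(1)$ by the existence of the limit in \eqref{int with factor}.

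I would then bifurcate into the same two regimes as for $M_n$. If at least one modulus $k_s$ with $1\le s\le j$ exceeds $(\log P)^{4B}$, the saving $k_0^{-3/2+\varepsilon}\ll(\log P)^{-5B}$ combined with $j$ applications of the large-sieve zero-density estimate \eqref{zero desity estimate} (which each contribute only $(\log P)^{13}$ by the computation leading to \eqref{using zero density}) gives $M_j\ll P^{n-\mathcal{D}}(\log P)^{-A}$ upon choosing $5B=A+13j$. If instead all $k_s\le (\log P)^B$ then the Siegel zero is excluded and each of the $j$ zero-sums is bounded by $\exp(-c(\log P)^{1/5})$ via Chudakov's zero-free region together with the classical zero-density estimate \eqref{classsical zero-density estimate}, as in \eqref{use classical zero-density}--\eqref{use Chudakov zero-free region}. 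The error term $E_2^{(j)}$ is controlled exactly as in \eqref{estimating MnE}, using $Q=P^\varpi$ with \eqref{def/pi}, and is of size $O(P^{n-\mathcal{D}-\delta})$.

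The task is purely a bookkeeping adaptation: one must verify that the $n-j$ ``trivial'' $\delta^+$-variables contribute precisely $P^{n-j}$ (up to a negligible discrepancy between sum and integral), so that combined with the $P^{-\mathcal{D}}$ from the $\boldsymbol{\theta}$-scaling the leading size remains $P^{n-\mathcal{D}}$. In fact, having only $j<n$ zero-sums in the product makes $M_j$ strictly easier than $M_n$, since each zero-sum supplies a further saving; the main obstacle is merely to keep track of the exponents of $\log P$ coming from the $j$ applications of the zero-density estimates so that the choice of $B$ can be calibrated uniformly in $j$. No new analytic input beyond what was used for $M_n$ is required.
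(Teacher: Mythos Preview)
Your proposal is correct and follows essentially the same approach as the paper's own proof: reduce to the first $j$ indices carrying $\delta^-$, invoke Lemma~\ref{lem/bound/gauss} (with $\chi_{j+1},\ldots,\chi_n$ trivial, so that $k_0=[k_1,\ldots,k_j]$) to extract the $k_0^{-3/2+\varepsilon}$ saving, then replay the $M_n$ argument verbatim with $n$ replaced by $j$ in the zero-sum count. Your bookkeeping of the $\log P$ exponents (taking $5B=A+13j$) is in fact slightly more careful than the paper's, which writes $5B=A+13$; either way $B$ is free, so this is immaterial.
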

\begin{proof}
    In fact, by \eqref{separate M into Mj}, we have, for $j=1,\ldots, n$,
    \begin{equation*}
        \begin{split}
            M_j
       & \ll   \sum_{k_1 \le Q}\cdots \sum_{k_j \le Q} \, \sideset{}{^\ast}\sum_{\chi_1 \bmod k_1}\cdots \sideset{}{^\ast}\sum_{\chi_j \bmod k_j}   \\
       & \ \ \ \times \bigg| \sum_{\substack{q \le Q \\ k_0|q}}\frac{1}{\varphi^n(q)}\sideset{}{^\dagger }\sum_{\boldsymbol{a} \bmod q} \, \sideset{}{^\ast}\sum_{\boldsymbol{h} \bmod q}\bar{\chi}_1\chi^0(h_1)\cdots \bar{\chi}_j\chi^0(h_j)e\bigg(\frac{\boldsymbol{a}\cdot\boldsymbol{F}(\boldsymbol{h})}{q}\bigg)\bigg|\\
       & \ \ \ \times \bigg| \int  \sum_{ \boldsymbol{x}\in P\mathfrak{B}}
       \delta_1^- \cdots \delta_j^-e(\boldsymbol{\theta} \cdot\boldsymbol{F}(\boldsymbol{x})) d\boldsymbol{\theta}\bigg|, 
       \end{split}
    \end{equation*}
    where $\delta^-_s=\delta^-(x_s,\chi_s\chi^0)$ is as in \eqref{define delta-} for $s=1,\ldots,j$, $\chi^0$ is the principal character modulo $q$ and
    $$
    k_0=[k_1,\ldots,k_j,1,\ldots,1]=[k_1,\ldots,k_j].
    $$
    Then \eqref{Bq bound} in Lemma \ref{lem/bound/gauss} yields
    \begin{equation}\label{using lemma bgs to bound Mj}
        M_j \ll \sum_{k_1 \le Q}\cdots \sum_{k_j \le Q}k^{-\frac{3}{2}+\varepsilon}_0  \sideset{}{^\ast}\sum_{\chi_1 \bmod k_1}\cdots \sideset{}{^\ast}\sum_{\chi_j \bmod k_j} \bigg| \int R_j d \boldsymbol{\theta}\bigg| 
    \end{equation}
    with
    \begin{equation*}
        R_j=\sum_{ \boldsymbol{x}\in P\mathfrak{B} }\delta_1^- \cdots \delta_j^-e(\boldsymbol{\theta }  \cdot\boldsymbol{F}(\boldsymbol{x})).
    \end{equation*}
    It is easy to see that our treatments for $M_n$ work for $M_j$ in \eqref{using lemma bgs to bound Mj}, which gives \eqref{Mj bound}. Details are therefore omitted.
\end{proof}

\subsection{Proofs of Lemma \ref{lem/maj/con} and of Theorem~\ref{Thm2}}  

\begin{proof}[Proof of Lemma~\ref{lem/maj/con}] 
Inserting Lemmas \ref{lem/comp/M0}-\ref{lemma bound Mj} into \eqref{separate the integral on the major arcs} proves 
Lemma \ref{lem/maj/con}. 
\end{proof} 

\begin{proof}[Proof of Theorem~\ref{Thm2}]  
Theorem~\ref{Thm2} follows from Lemmas \ref{lemma over minor2} and \ref{lem/maj/con}. 
\end{proof} 

\medskip

\noindent{\bf Acknowledgements.} 
The authors are grateful to Yang Cao, Bingrong Huang and Lilu Zhao for 
fruitful discussions, and to Xianchang Meng and Shuai Zhai for suggestions. 
And the authors are supported by the National Key Research and Development 
Program of China (No. 2021YFA1000700), and the National Natural Science Foundation 
of China (No. 12031008).

\end{document}